\newtheorem{thm}{Theorem}[section]
\newtheorem{prop}[thm]{Proposition}
\newtheorem{lem}[thm]{Lemma}
\newtheorem{cor}[thm]{Corollary}
\numberwithin{equation}{section}
\theoremstyle{definition}
\newtheorem{definition}[thm]{Definition}
\newtheorem{remark}[thm]{Remark}
\newtheorem{remarks}[thm]{Remarks}
\newtheorem{ex}[thm]{Example}
\newcommand{\cE}{{\mathcal E}}
\newcommand{\cO}{{\mathcal O}}
\newcommand{\cN}{{\mathcal N}}
\newcommand{\OO}{{\mathcal O}}
\newcommand{\E}{{\mathcal E}}
\newcommand{\M}{{\mathcal M}}
\newcommand{\bbQ}{{\mathbb Q}}
\newcommand{\bbZ}{{\mathbb Z}}
\newcommand{\bbC}{{\mathbb C}}
\newcommand{\bbL}{{\mathbb L}}
\newcommand{\bbA}{{\mathbb A}}
\newcommand{\bbP}{\mathbb P}
\newcommand{\bbR}{{\mathbb R}}
\newcommand{\LL}{{\mathbb L}}
\newcommand{\RR}{\mathbb R}
\DeclareMathOperator{\id}{{id}}
\DeclareMathOperator{\HHom}{\mathcal{H}{\it om}}
\DeclareMathOperator{\HExt}{{\mathcal E}{\it xt}}
\newcommand{\SDer}{{\mathcal{D}er}}
\newcommand{\ext}{{\textstyle{\bigwedge}}}
\newcommand{\exto}{{\bigwedge}}
\newcommand{\punto}{{\scriptscriptstyle \bullet}}
\newcommand{\Dqc}{\text{D}_{\text{qc}}}
\newcommand{\R}{\operatorname{R}}
\newcommand{\iso}{{\,\stackrel {\textstyle\sim}{\to}\,}}
\newcommand{\isom}{\stackrel{\sim}\to}
\newcommand{\Img}{\operatorname{Im}}
\newcommand{\Ker}{\operatorname{Ker}}
\newcommand{\Aut}{\operatorname{Aut}}
\newcommand{\Mat}{\operatorname{Mat}}
\newcommand{\GL}{\operatorname{GL}}
\newcommand{\Ber}{\operatorname{Ber}}
\newcommand{\BBer}{\operatorname{{\mathbb B}er}}
\newcommand{\Hom}{{\operatorname{Hom}}}
\newcommand{\Proj}{\operatorname{Proj}}
\newcommand{\Spec}{\operatorname{Spec}}
\newcommand{\SProj}{\operatorname{\mathbb{P}roj}}
\newcommand{\SSpec}{\operatorname{\mathbb{S}pec}}
\newcommand{\Kos}{\operatorname{Kos}^\bullet}
\newcommand{\SDeRham}{\operatorname{\mathbb{D}eRham}^\bullet}
\newcommand{\Tor}{\operatorname{Tor}}
\newcommand{\hSKos}[1]{\operatorname{\mathbb{K}os}^{#1}}
\newcommand{\SKos}{\operatorname{\mathbb{K}os}^\bullet}
\newcommand{\di}{\operatorname{d}}
\newcommand{\wt}{\widetilde}
\title{On Koszul Complex of a Supermodule}
\author[D. S\'anchez G\'omez]{Dar\'io S\'anchez G\'omez}
\author[F. Sancho de Salas]{Fernando Sancho de Salas}
\address{Departamento de Matem\'aticas and Instituto Universitario de F\'{\i}sica Fundamental y Matem\'aticas
(IUFFyM), Universidad de Salamanca, Plaza de la Merced 1-4, 37008
Salamanca, Spain.}
\email{dario@usal.es, fsancho@usal.es}
\subjclass[2020]{Primary: 14F10, 17A70; Secondary: 58A50} \keywords{Supermodule, Supergeometry, Koszul complex, Berezinian, Bott's formula}
\thanks{This work was supported by the Grant PID2021-128665NB-I00 funded by MCIN/AEI/ 10.13039/501100011033 and, as appropriate, by ``ERDF A way of making Europe".
}
\date{\today}
\begin{document}

\maketitle

\begin{center}
\textit{{Dedicated to Professor Daniel Hern\'andez Ruip\'erez on the occasion of  his 70$\,^{th}$ birthday}}
\end{center}

\begin{abstract}
This paper is devoted to an exposition of the Koszul complex of a supermodule and its Berezinian from an intrinsic and as general as possible point of view. As an application, an analogue to Bott's formula in the supercommutative setting computing the cohomology of twisted differential $p$-forms sheaves on the projective superspaces is given.
\end{abstract}



\section*{Introduction}
The Koszul complex appears in different forms across many areas of mathematics. It was originally introduced by J. L. Koszul (\cite{Koszul}) for the purpose of defining a cohomology theory for Lie algebras. Since then, it has turned out to be a useful tool in homological algebra (and in many other contexts including algebraic geometric, commutative algebra and representation theory) that can be used to understand topics like regular sequences, resolutions of modules and the structure of graded algebras.

In these notes we address the problem of studying the universal Koszul complex of a module in the context of the supercommutative algebra. In all honesty, we must point out that it is not the first time this problem has been considered. Some results mentioned in this paper appear, in one guise or another, in Manin's papers (see for instance \cite{Manin88,Manin91}) in \cite{Leites80} and \cite{OP84} and, more recently, in \cite{NR22}. However, unlike those papers, the novelty of our contribution lies in the fact that we provide a purely algebraic approach and we tackle this matter from an intrinsic point of view and in more general terms; no needing to take coordinates (as in \cite{NR22}) in order to make calculations and considering a broad range of superspaces, without limiting ourselves to complex supergeometry (as in \cite{Leites80} or \cite{OP84}), without making extra assumptions about the smoothness of the considered spaces and valid in arbitrary characteristic. Therefore, the aim of this paper is to handle these issues as generally as possible so that the results can be applied to a wide range of cases. 

Following the approach and terminology used in \cite{Sancho00} and \cite{ASS16}, we shall reformulate the universal super Koszul complex of a supermodule in terms of the differential forms of its symmetric superalgebra. Its differential will be given by the inner product with the universal superderivation of the supermodule. This point of view will make clear the relationship between the super Koszul complex and the super De Rham complex which allows us to obtain some results on the acyclicity of the Koszul complex. An unexpected outcome is that the super Koszul complex is not necessarily acyclic in general, even restricting ourselves to the case of free supermodules. Notice further that, whereas on the ordinary commutative setting differential forms anticommute, in the supercommutative setting the differential of odd functions do commute instead. Therefore, the super De Rham complex is not  bounded from above. Consequently, there is no notion of a top differential form and the determinat or canonical module of a free supermodule makes no sense in the supercommutative setting. The Berezinian (\cite{HRM,Manin88}) provides a substitute for the notion of the determinant when passing from the commutative setting to the supercommutative setting. In this paper, as a first application of this way of describing the Koszul complex, we shall show that the Berezinian of a free supermodule can be understood as a dualizing sheaf, leading to the computation of the Berezinian of a supermatrix in a straightforward way. As a second application, we shall give a formula (which we have called the super Bott formula) to compute the cohomology of the twisted sheaves of differential $p$-forms on the projective superspace which reduces to the Bott formula (\cite{Bott57}\cite{OSS80}) in the ordinary case.

Before explaining how the paper is structured, a remark is now in order. Although throughout the paper we are considering only the affine case, that is, supermodules over a superring, both assertions and proofs carry over completely to the global case considering locally free sheaves of supermodules over superschemes.

The structure of this paper is as follows. Seeking to provide a completely self-contained exposition we begin, in Section \ref{sec:generalities}, with a brief review of basic notions that appear in commutative superalgebra. We focus our attention on the symmetric algebra $S^\punto_A M$ and the exterior algebra $\bigwedge^\punto_A M$ associated to an $A$-supermodule $M$, and also on the module of K\"ahler differentials $\Omega_{B/A}$ of an $A$-superalgebra $B$, since these will be the building blocks in defining the Koszul complex of a supermodule. The section ends recalling the notion of superscheme and making contact with the relative dualizing complex associated to a closed immersion of superschemes. In particular, we establish two of its remarkable properties, transitivity (Proposition \ref{transitivity}) and flat base change (Proposition \ref{flatbasechange}), which will be needed in later sections.

In Section \ref{sec:DeRahmKos}, the universal Koszul complex of a module in the supercommutative setting is defined following the approach and terminology introduced in \cite{Sancho00}. For any $A$-supermodule $M$, its super Koszul complex $\SKos_A(M)=\bigwedge^\punto_A M\otimes_A S^\punto_AM$ is a graded complex whose $n$-th graded component $\SKos_A (M)_n$ is the complex
\begin{equation*}
\xymatrix@C=17pt{
0\ar[r]& \bigwedge^{n}_A M\ar[r]&\cdots\ar[r]&  \bigwedge^{p}_A M\otimes_{A} S_{A}^{n-p}M\ar[r]&\cdots\ar[r]  &S_{A}^n M\ar[r] & 0
}
\end{equation*}
We firstly show  the existence of a canonical isomorphism $\Omega_{S^\punto_A M/A}\iso M\otimes_A S^\punto_A M$ of graded $S^\punto_A M$-supermodules (Theorem \ref{thm:relativediff}). This allows to understand the super Koszul complex of $M$ as the complex $(\Omega^{\bullet}_{S^\punto_A M/A}, i_D)$ of graded $S^\punto_A M$-supermodules of differential forms, whose differential $i_D\colon \Omega^{p}_{S^\punto_A M/A}\to \Omega^{p-1}_{S^\punto_A M/A}$ is the inner contraction given by the superderivation $D\colon S^\punto_A M\to S^\punto_A M$ consisting in multiplication by $n$ on $S^n_A M$. Expressing the super Koszul complex in terms of differential forms has the advantage of highlighting its connection with the super De Rham complex. Namely, both differentials are linked through Cartan's formula, $i_D\circ d+d\circ i_D=n\, Id$ on $\SKos_A(M)_n$, resulting in an acyclicity outcome for the non negative homogeneous components of the super Koszul complex (Theorem \ref{thm:aciclicity}). This also will yield a splitting result (Corollary \ref{cor:Bott}) which will be essential for further cohomological computations in Section \ref{sec:Bott} as we shall explain later on. It should be noted that the characteristic singularities of the supercommutative context, due to the presence of odd variables, make some unexpected facts arise. For instance, the super Koszul complex is unbounded below and, more remarkably, even for free supermodules the super Koszul complex may fail to be acyclic (Theorem \ref{thm:Koszulfreemodule}).

Section \ref{sec:Berezinians} deals with the Berezinian of a supermodule which is defined as the dual of its super Koszul complex. It is fair to say that results regarding  the Berezinian have previously appeared (see for instance \cite{OP84} and \cite{NR22}). In this paper, however, we provide a completely general and detailed treatment of this fundamental construction in commutative superalgebra from an intrinsic approach and in a fairly general way. We compute the cohomology modules of the Berezinian complex of a free supermodule (Theorem \ref{thm:CohomBerFree}) and we define the Berezinian module of a free supermodule $L$  of rank $(p|q)$ as the $p$-th cohomology module of the dual of the super Koszul complex of $L$. We also prove (Theorem \ref{berziniano=dualizante}) that the Berezinian module of a free $A$-supermodule is the dualizing sheaf corresponding to the closed immersion given by the zero section $\SSpec A\hookrightarrow  \SSpec S^\punto_A L$. Furthermore, we show that the Berezinian of the automorphism of a free supermodule, which plays the role of the determinant of an automorphism of a purely even free module, is an isomorphism between dualizing sheaves. As a consequence, we recover in a natural way (Proposition \ref{prop:Berezin}) the supercommutative analogue of the classical determinant of a matrix, the so-called Berezin function or Berezin determinat (\cite{Ber87, Leites80}): $$\Ber \left ( 
\begin{array}{c|c}
A & B \\
\hline 
C & D
\end{array}
\right )= \det(D^{-1})\det(A-BD^{-1}C)\,.$$
Finally, in Section \ref{sec:Bott} we provide an explicit formula computing completely the cohomology of the sheaves $\Omega^p_{\bbP^{m|n}}(r)$ on the projective superspace $\bbP^{m|n}$ (we use the standard notation $\Omega^p_{\bbP^{m|n}}(r)=\Omega^p_{\bbP^{m|n}}\otimes\cO_{\bbP^{m|n}}(r))$. This result reduces to Bott's formula (\cite{Bott57}) to the ordinary projective space setting $n=0$. We follow the approach presented in \cite{ASS16} where (in a joint work with B. Andreas) we generalized Bott's formula to projective bundles by using the universal Koszul complex of a module. At first, one might think that the super Bott formula obtained in this paper is just a straightforward generalization and that the proofs go basically parallel to those given in \cite{ASS16}. However, and as we will show, the analogues of the key results used in the commutative case do not extend to the supercommutative case. This leads us to employ a different strategy, definitively turning our result into a highly non-trivial generalization. Firstly, given a free supermodule $L=A^{m|n}$, we define a complex $\wt{\SKos_A}(L)=(\widetilde{\Omega}^\punto_{B/A}, i_D)$ of supermodules on $\bbP^{m|n}$ taking homogeneous localization in the super Koszul complex of $L$ (here $B$ denotes $S^\punto_AL$). We prove that this complex on $\bbP^{m|n}$ is acyclic and its factors (cycles or boundaries) are the sheaves $\Omega^p_{\bbP^{m|n}}$ (Proposition \ref{prop:Koszultilde}). This provides an exact sequence
$$0\to\Omega^p_{{\bbP}/A}\xrightarrow{} \widetilde{\Omega}^p_{B/A}\xrightarrow{} \widetilde{\Omega}^{p-1}_{B/A}\xrightarrow{}\cdots\xrightarrow{} \widetilde{\Omega}_{B/A}\xrightarrow{}  \cO_{\bbP}\to 0$$
which, in contrast with the ordinary case, is not necessarily $\Gamma$-acyclic (Corollary \ref{cor:cohhomdif}). In particular, the above complex $\{\widetilde{\Omega}^i_{B/A}\}_{i=0}^p$ does not provide a suitable resolution of $\Omega^p_{\bbP/A}$ to compute its cohomology modules. Then, the approach of \cite{ASS16} to obtain Bott's formula in the purely even setting does work in the supercommutative context. Instead, we shall compute the cohomologies $H^i(U,\pi^*\Omega^p_{{\bbP}/A})$ by means of the super Koszul complex (Proposition \ref{prop:Bott}), considering the affine map 
$\pi\colon U=\bbA^{m+1|n}-\{0\}\to \bbP^{m|n}$ given by the action of the multiplicative group, and we obtain the super Bott formula for the projective superspace as a consequence (Theorem \ref{thm:Bott} and Corollary \ref{cor:Bott}).

\section{Generalities on supercomutative algebra}\label{sec:generalities}
To start with, we will give a brief overview of some basic concepts and facts of supercommutative algebra which, in the sequel, will be useful for both the convenience of the reader unfamiliar with this subject and to provide the stage and the algebraic machinery to deal with the Koszul complex of a supermodule. For a thorough exposition of supercommutative algebra we suggest to visit classical references such as \cite{Leites80, Manin88, Manin91,BBR91,Witt84} or more recent papers as \cite{CCF11,BR2023}

\begin{definition}
A superring $A$ is a $\bbZ_2$-graded ring, that is, $A=A_0\oplus A_1$ where each $A_i$ is an abelian additive subgroup of $A$ and the product satisfies $A_iA_j\subseteq A_{i+j}$ (index modulo 2). 
\end{definition}
The elements of $A_0$ are called homogeneous even elements whereas elements of $A_1$ are called homogeneous odd elements. If $a\in A_i$, then we put $|a|=i$ and $|a|$ is called the parity of $a$.  A morphism of superrings is a parity preserving morphism of rings. 

A superring $A=A_0\oplus A_1$ is said to be commutative if the Quiller-Koszul signs rule holds:
\begin{equation}\label{signsrule}
\begin{aligned}
&ab=ba, \text{ either if } a,b\in A_0, \text{ or if } a\in A_0 \text{ and } b\in A_1\\
&a^2=0 \text{ if } a\in A_1.
\end{aligned}
\end{equation}
Note that these two conditions are equivalent to 
$ab=(-1)^{|a|\,|b|} ba$ for any two homogeneous elements, provided $2$ is not zero divisor in $A$.

To avoid confusion, commutative superrings are also called supercommutative rings.
\subsection{Supermodules}

Fixed a superring $A$, a left $A$-supermodule is a $\bbZ_2$-graded abelian group $M$ which is a left $A$-module in the usual sense and, in addition, for homogeneous elements $a_i\in A$ and $m_j\in M$ the product $a_i\cdot m_j$ lies in $M_{i+j}$. Analogously, a right supermodule is a right module in the usual sense in such a way that the product (say $m*a$) respects the $\bbZ_2$-grading. When $A$ is a supercommutative ring, any left $A$-module $M$ also has a natural right $A$-module structure by setting 
\begin{equation}\label{eq:LRmodule}
m* a=(-1)^{|a|\,|m|} a\cdot m
\end{equation}
for all homogeneous elements $a\in A$ and $m\in M$, and then extending by linearity, in such a way that both actions commute $(a\cdot m)*b=  a\cdot (m* b)$ for all $a,b\in A$ and $m\in M$. Hence, $M$ can be considered both sided supermodule over the supercommutative ring $A$ so that $M$ is just called an $A$-supermodule without mentioning right of left. When no confusion is clear, we will use just juxtaposition denoting both actions.

For a right $A$-supermodule $M$, we denote by $\Pi M$ the parity-swapping $A$-supermodule, that is, the shifted right $A$-supermodule $M[1]$ where $M[1]_0=M_1$ and $M[1]_1=M_0$. The right action of $A$ on $\Pi M$ is defined as the right action of $A$ on $M$. Note that the corresponding compatible left module structures just described above (Equation \ref{eq:LRmodule}) on $M$ and $M[1]$ differ on a sign depending on the parity of $a\in A$.

Morphisms of $A$-supermodules (also called even morphisms) are defined to be parity preserving maps that commute with the action of $A$. Since a morphism $f$ preserves parity, when $f$ commutes with the right action of A it also commutes with the left action. This set of morphisms does not carry any natural supermodule structure itself, we need to extend it by adding parity reversing morphisms. An odd morphism $f\colon M\to N$ is a parity reversing morphism which is $A$-linear with respect the right action of A, $f(ma)=f(m)a$. That is, an even morphism $f\colon M\to N[1]$ where the right action of $A$ in $N[1]$ is the one induced by the right action of $A$ in $N$. Therefore the set ${\HHom}_A(M,N)$ of all $A$-linear morphisms $M\to N$ is endowed with a natural $\bbZ_2$-grading and it is a $A$-supermodule with the following action of A:
$$(a\cdot f)(m)=a\cdot(f(m))\,.$$

\begin{definition}
An $A$-supermodule $M$ is called a free supermodule if $M$ is free as $A$-module with a basis consisting of homogeneous elements.
\end{definition} 
This is equivalent to say that there exists an isomorphism of $A$-supermodules 
$$\bigoplus_{i\in I} \Pi ^{d_i}A\isom M$$
given as follows: consider the free $A$-supermodule $\bigoplus_{i\in I}\Pi^{d_i} A$, with $I$ an indexing set and $d_i\in \bbZ_2$. For each $i\in I$ the element $e_i$ consisting in $1$ in the $i$-th component and zero elsewhere is homogeneous of parity $|e_i|=d_i$. Therefore, given  $\{m_i\}_{i\in I}$ a homogeneous basis of $M$, where $|m_i|=d_i$, one has the above $\bbZ_2$-graded isomorphism which maps $e_i$ to $m_i$.	It is customary to enumerate the elements of a basis in a free module so that the even ones come first and then the odd ones. For simplicity, a free $A$-supermodule $M$ with homogeneous basis $\big\{\{m_i\}_{i\in I_0},\, \{\eta_j\}_{j\in I_1}\big\}$, being $|m_i|=0$ and $|\eta_j|=1$, shall be denoted $M\simeq A^{I_0}\oplus\Pi A^{I_1}$. If $|I_0|=p$ and $|I_1|=q$ the free $A$-supermodule $A^{p}\oplus\Pi A^{q}$ is also denoted as $A^{p|q}$ and we say that has rank $p|q$.

\begin{definition}
Let $A$ be a supercommutative ring. Given two $A$-supermodules $M$ and $N$, the usual tensor product $M\otimes_A N$ (considering $M$ as a right $A$-module and $N$ as left $A$-module) is a $A$-supermodule with the right action $$(m\otimes n)\cdot a=m\otimes n\cdot a$$ and is  $\bbZ_2$-graded by setting
$$(M\otimes_A N)_n=\bigoplus_{i+j=n}(M_i\otimes N_j)\, \text{ (index modulo 2). }$$ 
\end{definition}

\subsection{Superalgebras}

Fix a supercommutative ring $A$.  An $A$-superalgebra $B$ is defined to be an $A$-algebra with a  $\bbZ_2$-grading such that $A_iB_j\subset B_{i+j}$ (index modulo 2), that is, an $A$-superring $B$ endowed with a morphism $A\to B$ of superrings. 

We say that a superring (resp. superalgebra) $A$ is a graded superring (resp. superalgebra) if $A$ is a $\bbZ$-graded ring (resp. algebra) and both grading are compatible, that is, if $A_i$ denotes the homogeneous component of degree $i$ with respect to the $\bbZ$-grading and $A_+$ and $A_-$ denote respectively the even and odd elements of $A$, then  $(A_i)_+=A_+\cap A_i$ and $(A_i)_-=A_i\cap A_-$, for all $i\in\bbZ$.
\begin{ex}
Let $A$ be a  superring. The polynomial superalgebra over $A$ in $m$ even variables $x_i$ and $n$ odd variables $\theta_i$ is defined as the algebra over $A$ generated by $x_i$ and $\theta_i$ under the following relations: $x_ix_j=x_jx_i,\, x_i\theta_j=\theta_jx_i$ and $\theta_i\theta_j=-\theta_j\theta_i$. It is denoted by $A[x_1,\dots,x_m|\theta_1,\dots,\theta_n]$ and is a graded superalgebra where all variables are free of degree $1$.
\end{ex}

If $M$ is an  $A$-supermodule and $B$ is a graded $A$-superalgebra, then $M\otimes_{A} B$ is a graded $B$-supermodule with $\bbZ$-grading $(M\otimes_{A} B)_n=M\otimes_{A} B_n$.

\begin{ex}
Let $A$ be a ring and let $M$ be a free $A$-module with basis $\{\theta_1,\dots,\theta_n\}$. The exterior algebra $\bigwedge^\punto_A M$ is a graded $A$-superalgebra. The $k$-th homogeneous component with respect the $\bbZ$-grading is $\bigwedge^k_A M$. We put a $\bbZ_2$-grading by saying that even elements are $(\bigwedge ^\punto _A M)_0=\bigwedge^{2i}_A M$ and odd elements are $(\bigwedge ^\punto _A M)_1=\bigoplus_i \bigwedge ^{2i+1} _A M$, that is, the parity $|\omega|$ of $\omega$ equals the degree of $\omega$ mod $\bbZ_2$. The tensor product induces a multiplication law in $\bigwedge^\punto_A M=\bigoplus_k\bigwedge^k_A M$ which is denoted by $\wedge$ and satisfies $\omega_p\wedge \omega_q=(-1)^{pq}\omega_q\wedge\omega_p$. Since $(-1)^{|\omega|}=(-1)^{\deg(\omega)}$ one gets that $\bigwedge^\punto_A M$ is a supercommutative $A$-algebra.
\end{ex}
In order to define the symmetric and exterior algebras of a supermodule over a superring we have to be a bit more careful. Let $M$ be a $A$-supermodule and let $T^\punto_A(M)=\displaystyle\bigoplus_{i\geq 0}M^{\otimes_A^i}$ be its tensor superalgebra. 
\begin{definition}
The symmetric superalgebra over $M$ is defined as $S^\punto_A M=T(M)/I_M$, where $I_M$ denotes the $\bbZ_2$-graded ideal generated by the elements of $T^2(M)$ of the form $m\otimes m$ for all homogeneous odd element $m\in M$ and elements of the form $m\otimes m'-(-1)^{|m||m|'}m'\otimes m$, where $m$ and $m'$ run over all homogeneous elements of $M$. Then $S^\punto_A M=\bigoplus_{n\geq 0} S^n_A M$ and each homogeneous component $S^n_A M$ is an $A$-supermodule.
The tensor product in $T^\punto_A(M)$ gives rise to a product in $S^\punto_A M$ which satisfies the Quiller-Koszul signs rule (\ref{signsrule}). Therefore $S^\punto_A M$ is a graded supercommutative $A$-algebra, where the  structure of $\bbZ$-graded $A$-algebra is that induced by the symmetric product and that of $A$-superalgebra is that inherited by the parity of $M$. 
\end{definition}

Similarly, the exterior algebra $\bigwedge^\punto_A M$ of a supermodule $M$ is defined in the following way:

\begin{definition}
The exterior algebra $\bigwedge^\punto_A M$ is defined to be the quotient of $T^\punto_A(M)$ by the ideal generated by elements of the form $m\otimes m'+(-1)^{|m||m'|}m'\otimes m$, for all homogeneous elements $m,m'\in M$ and elements of the form $m\otimes m$ for all homogeneous even element $m\in M$.  It follows that $\bigwedge^\punto_A M=\bigoplus_{n\geq n}\bigwedge^n_A M$ and the $n$-th exterior power  $\bigwedge^n_A M$ of $M$ is an $A$-supermodule. The product of $T^\punto_A(M)$ induces one in $\bigwedge^\punto_A M$ which is denoted by $\wedge$ and satisfies the following relationship: 
$$\omega_p\wedge\omega_q=(-1)^{pq}(-1)^{|\omega_p||\omega_q|}\omega_q\wedge\omega_p$$
for any $\bbZ_2$-homogeneous elements $\omega_p\in \bigwedge^p_A M,\, \omega_q\in \bigwedge^q_A M$. Notice that $\bigwedge ^n_A M$ is a non commutative $A$-superalgebra.
\end{definition}

It is worth noticing that, if $A$ is purely even and $M\simeq A^p$, then  $S^\punto_A(M)=\bigwedge ^\punto _A (\Pi M)=A[x_1,\cdots x_p]$ is purely even  and $S^\punto_A(\Pi M)=\bigwedge ^\punto _A(M)$ is a $A$-supercommutative algebra.  However, if $A$ is an arbitrary supercommutative ring and $M\simeq A^p$ is a free $A$-supermodule of rank $(p|0)$, then 
\begin{equation}\label{eq:simetricotwisted}
S^k_A(\Pi M)\simeq \Pi^k{\bigwedge}^k_A M \text{ and } {\bigwedge}^k_A (\Pi M)\simeq \Pi^kS^k_A M.
\end{equation}

As in the ordinary case (that is, the purely even case) one proves:
\begin{prop}\label{prop:SEproperties}
\hspace{2em}
\begin{enumerate}
\item Base change: Let $A\to A'$ be a morphism of superrings and let $M$ be an $A$-supermodule. Then $S^\punto_{A'}(M\otimes_A A')\simeq S^\punto_A M\otimes _A A'$ and $\bigwedge^\punto_{A'}(M\otimes_A A')\simeq \bigwedge^\punto_A M\otimes _A A'$ .
\item Additivity: Let $M$ and $N$ be two $A$-supermodules. Then $S^\punto_A(M\oplus N)\simeq S^\punto_AM\otimes_A S^\punto_A N$ and $\bigwedge^\punto_A(M\oplus N)\simeq \bigwedge^\punto_AM\otimes_A \bigwedge^\punto_A N$. 
\end{enumerate}
\end{prop}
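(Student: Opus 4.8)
The plan is to deduce all four isomorphisms from the universal properties that characterize the symmetric and exterior superalgebras, so that the statement reduces to a formal consequence of representability (Yoneda) together with standard facts about extension of scalars and coproducts. Recall that $S^\punto_A(-)$ is left adjoint to the forgetful functor from supercommutative $A$-algebras to $A$-supermodules: for every supercommutative $A$-algebra $B$ there is a natural bijection
$$\Hom_{A\text{-}\mathrm{alg}}(S^\punto_A M, B)\simeq \HHom_A(M,B)_0,$$
the even $A$-linear maps $M\to B$, and dually $\bigwedge^\punto_A(-)$ is left adjoint to the forgetful functor from the category of $\bbZ\times\bbZ_2$-graded-commutative $A$-algebras (those obeying the double sign rule $\omega_p\wedge\omega_q=(-1)^{pq}(-1)^{|\omega_p||\omega_q|}\omega_q\wedge\omega_p$) to $A$-supermodules placed in wedge-degree $1$. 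Both adjunctions are immediate from the construction as a quotient of $T^\punto_A(M)$ by the relations imposing exactly these sign rules.

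For base change, first I would use the tensor identity $(M\otimes_A A')^{\otimes_{A'}i}\simeq M^{\otimes_A i}\otimes_A A'$ to get an isomorphism of tensor superalgebras $T^\punto_{A'}(M\otimes_A A')\simeq T^\punto_A M\otimes_A A'$. Under this identification the graded ideal $I_{M\otimes_A A'}$ is exactly the ideal generated by the image of $I_M$, since the defining relations over $A'$ are the base changes of those over $A$. As $-\otimes_A A'$ is right exact it carries the presentation $I_M\to T^\punto_A M\to S^\punto_A M\to 0$ to the corresponding presentation over $A'$, giving $S^\punto_{A'}(M\otimes_A A')\simeq S^\punto_A M\otimes_A A'$; the exterior case is identical. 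Alternatively, and more transparently, one checks that both sides represent the same functor $B\mapsto \HHom_A(M,B)_0$ on supercommutative $A'$-algebras, combining the adjunction above with the extension–restriction adjunction $\HHom_{A'}(M\otimes_A A',B)\simeq\HHom_A(M,B)$.

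For additivity I would argue that $\otimes_A$ realizes the coproduct in the relevant category of algebras. For supercommutative $A$-algebras $B,C$ the super tensor product $B\otimes_A C$ with product $(b\otimes c)(b'\otimes c')=(-1)^{|c||b'|}bb'\otimes cc'$ is again supercommutative and satisfies the universal property of the coproduct, with coprojections $b\mapsto b\otimes 1$ and $c\mapsto 1\otimes c$. Since $M\oplus N$ is the coproduct of $M$ and $N$ among $A$-supermodules and a left adjoint preserves coproducts, applying $S^\punto_A(-)$ yields
$$S^\punto_A(M\oplus N)\simeq S^\punto_A M\amalg S^\punto_A N\simeq S^\punto_A M\otimes_A S^\punto_A N .$$
Concretely the isomorphism is induced by the two algebra maps arising from $M,N\hookrightarrow M\oplus N$ in one direction, and by the algebra map extending $(m,n)\mapsto m\otimes 1+1\otimes n$ in the other; these are mutually inverse. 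The exterior case follows by the same scheme once $\otimes_A$ is equipped with the full double sign rule $(\omega\otimes\eta)(\omega'\otimes\eta')=(-1)^{(\deg\eta)(\deg\omega')}(-1)^{|\eta||\omega'|}\,(\omega\wedge\omega')\otimes(\eta\wedge\eta')$, which is precisely the coproduct in the category of $\bbZ\times\bbZ_2$-graded-commutative algebras.

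The routine parts are the tensor identities and right-exactness used in base change. The step that demands genuine care, and which I expect to be the main obstacle, is the exterior case of additivity: one must verify that the product on $\bigwedge^\punto_A M\otimes_A\bigwedge^\punto_A N$ built from the combined $\bbZ$-degree and parity signs is associative and lands in the correct graded-commutative category, so that $(m,n)\mapsto m\otimes 1+1\otimes n$ genuinely extends to an algebra homomorphism; equivalently, that $\otimes_A$ with this double sign is the honest coproduct. Once this sign bookkeeping is settled, both additivity isomorphisms, and hence by base change the remaining statements, follow formally.
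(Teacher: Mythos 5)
The paper itself gives no proof of this proposition: it simply states that one proves it ``as in the ordinary case,'' i.e.\ by the same arguments as for modules over a commutative ring. Your proposal is a correct and standard way to carry this out, and arguably more conceptual than the implicit elementary argument: you reduce base change to the fact that both sides represent the same functor (or, equivalently, to the presentation $I_M\to T^\punto_AM\to S^\punto_AM\to 0$ together with right-exactness of $-\otimes_AA'$ and the observation that $I_{M\otimes_AA'}$ is generated by the image of $I_M$), and you reduce additivity to the fact that a left adjoint preserves coproducts, with $\otimes_A$ being the coproduct of supercommutative $A$-algebras and $M\oplus N$ the coproduct of supermodules. The explicit mutually inverse maps you describe are exactly the ones one would write down in the elementary proof, so the two routes agree on the nose; what your route buys is that the only genuine verification left is that $B\otimes_AC$ with the sign-twisted product is again an object of the relevant category and satisfies the coproduct universal property.

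One point deserves more care than you give it, precisely because the paper insists on working in arbitrary characteristic. The ideals defining $S^\punto_AM$ and $\bigwedge^\punto_AM$ contain not only the commutation relations but also the square-zero relations ($m\otimes m$ for $m$ odd in the symmetric case, $m\otimes m$ for $m$ even in the exterior case), and when $2$ is a zero divisor these are \emph{not} consequences of the sign rules. For the symmetric algebra this is harmless, since the paper's notion of supercommutative ring already includes $a^2=0$ for odd $a$, so your adjunction is correctly stated. But your target category for the exterior adjunction is defined by the double sign rule alone; in characteristic $2$ the free object on $M$ in that category is $T^\punto_AM$ modulo commutators only, which can differ from $\bigwedge^\punto_AM$ by $2$-torsion. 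The fix is routine --- enlarge the defining conditions of the category to require that elements of odd total degree square to zero, and check that this condition is stable under the sign-twisted tensor product (the diagonal terms of $(\sum\omega_i\otimes\eta_i)^2$ vanish because one factor of each is totally odd, and the off-diagonal terms cancel in pairs) --- but it must be said, since otherwise the exterior half of both statements is only proved after inverting $2$. Your fallback presentation argument for base change does not suffer from this issue, since it tracks the actual generators of the ideal.
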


\begin{definition}
Let $B$ be an $A$-superalgebra, we say that $B$ is finitely generated (as $A$-superalgebra), if there is a finite number of homogeneous elements $\xi_1,\dots, \xi_m, \eta_1,\dots, \eta_n$ in $B$ such that each element of $B$ can be written as a polynomial in the $\xi_i$ and $\eta_i$ with coefficients in $A$. In other words, there are even elements $\xi_1,\dots, \xi_m$ and odd elements $\eta_1,\dots,\eta_n$ in $B$ and a surjective morphism of superrings from the polynomial $A$-supercommutative algebra $A[x_1,\dots, x_m\,|\, \theta_1,\dots, \theta_n]$ to $B$
which maps $x_i$ to $\xi_i$ and $\theta_i$ to $\eta_i$. So that, any finitely generated superalgebra over $A$ is a quotient of $A[x_1,\dots, x_m]\otimes_A \exto^\punto_A E_\theta$ by some homogeneous ideal, where $\exto^\punto_A E_\theta$ denotes the Grassmann algebra generated by $\{\theta_1,\dots,\theta_n\}$.
\end{definition}

\subsection{Kahler differentials and De Rham complex}
Let $B$ be an $A$-superalgebra and $M$ a $B$-supermodule, a superderivation of degree $0$ (or an even derivation) of $B$ over $M$ is a morphism of $A$-supermodules $D\colon B\to M$ obeying the Leibniz rule 
$$D(bb')=Dbb'+bDb'\,.$$

A superderivation of degree $1$ (or an odd superderivation) of $B$ over $M$ is an anti-derivation from $B$ to $M$ of degree $1$, that is, a morphism of $A$-modules $D\colon B\to M$ which changes the parity of elements and satisfies the Leibniz rule in the form:
$$D(bb')=Dbb'+ (-1)^{|b|}bDb'\,.$$
Since the left module structures on $M$ and $M[1]$ differ on a sign depending on the parity of $b\in B$, the expression $D(bb')=Dbb'+ (-1)^{|b|}bDb'$ reads as $D(bb')=Dbb'+ bDb'$ on $M[1]$. In other words, a superderivation of degree $1$ (or an odd superderivation) is a superderivation of degree $0$ from $B$ to $M[1]$.

By a superderivation we mean a sum of even and odd superderivations. We write $\SDer_A(B,M)$ for the set of all $A$-superderivations of $B$ into a $B$-supermodule $M$. The sum of two superderivations is a derivation and if $D$ is a superderivation and $b\in B$, then $bD$ defined as $(bD)(b')=bD(b')$ is also a superderivation. Then $\SDer_A(B,M)$ is a $B$-supermodule. 

\begin{ex}\label{ex:Eulerfield}
Let $M$ be a $A$-supermodule and denote $B=S^\punto_A M$. The map $D\colon B\to B$ consisting in multiplication by $d$ on $B_d=S^d_A M$ is an (even) $A$-superderivation that we shall call the homothety field. In particular, if $M$ is a free $A$-supermodule of rank $m|n$, then $B=A[x_1,\dots,x_m | \theta_1,\dots\theta_n]$ and $D$ is the so-called Euler vector field 
$$D=\sum_{i=1}^m x_i\frac{\partial}{\partial x_i}+\sum_{i=1}^n \theta_i\frac{\partial}{\partial\theta_i}\,.$$
with $|\frac{\partial}{\partial x_i}|=0$ and $|\frac{\partial}{\partial \theta_i}|=1$.
\end{ex}

The tensor product $B\otimes_A B'$ of two $A$-superalgebras is defined as the usual tensor product of $A$-modules endowed with the product
$$(b_1\otimes b_2)\cdot (b_1'\otimes b_2')=(-1)^{|b_2||b_1'|}(b_1\cdot b_1')\otimes (b_2\cdot b_2')\,.$$

Let $B$ be an $A$-superalgebra. The natural morphism $$B\otimes_A B\to B,\,\, b\otimes b'\mapsto bb'$$ is an (even) morphism of superalgebras. The kernel $\Delta$ is a $\bbZ_2$-graded ideal of $B\otimes_A B$ and a right  $B=(A\otimes_A B)$-supermodule generated by all elements of the form $b\otimes 1-1\otimes b$, where $b$ runs over all elements of $B$. The quotient $\Delta/\Delta^2$ is called the module of K\"ahler differentials of $B$ relative to $A$ and is denoted by $\Omega_{B/A}$. The canonical map 
$$\di\colon B\to\Omega_{B/A}$$
given by $\di b=\overline{b\otimes 1-1\otimes b}$ is called the differential of $B$ over $A$  or the canonical derivation.

$\Omega_{B/A}$ is a right $B$-supermodule generated by all elements of the form $\di b$, for all $b\in B$, and there exists an isomorphism of right $B$-supermodules
\begin{equation}\label{eq:Der}
\HHom_B(\Omega_{B/A},M)\simeq \SDer_A(B,M),\quad f\mapsto f\circ\di\,.
\end{equation}

Let us denote by $\Omega^p_{B/A}$ the $p$-th exterior power of $\Omega_{B/A}$. For each $p$ the differential $\di\colon B\to\Omega_{B/A}$ lifts (in a unique way) to the exterior differential $\di\colon \Omega^p_{B/A}\to\Omega^{p+1}_{B/A}$, which is a nilpotent morphism of $A$-supermodules satisfying 
$$\di(\omega_p\wedge\omega_q)=\di\omega_p\wedge \omega_q+(-1)^{p} \omega_p\wedge\di \omega_q\,.$$ 
In other words, $d$ is the only $A$-antiderivation of degree $1$ on $\Omega^\bullet_{B/A}=\bigwedge^\punto_{B} \Omega_{B/A}$ which in degree zero coincides with the differential and verifies $d^2=0$. 

\begin{definition}
The super De Rham complex of $B$ over $A$ is defined to be the complex of $A$-supermodules  $(\Omega^\bullet_{B/A}, d)$.
\end{definition}

The following assertions hold.
\begin{prop}\label{prop:Diffproperties}
Let $B$ and $B'$ be supercommutative $A$-algebras. Then
\begin{enumerate}
\item Base change: $\Omega_{B\otimes_A B'/B'}\simeq \Omega_{B/A}\otimes_A B'$ and hence there is an isomorphism of complexes
$$\Omega^\bullet_{B\otimes_A B'/B'}\simeq \Omega^\bullet_{B/A}\otimes_A B'\,.$$
\item Additivity: $\Omega_{B\otimes_A B'/A}\simeq \Omega_{B/A}\otimes_A B'\oplus \Omega_{B'/A}\otimes_A B$ and hence there is an isomorphism of complexes
$$\Omega^\bullet_{B\otimes_A B'/A}\simeq \Omega^\bullet_{B/A}\otimes_A\Omega^\bullet_{B'/A}\,.$$
\end{enumerate}
\end{prop}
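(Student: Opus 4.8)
The plan is to establish both isomorphisms first at the level of the single module $\Omega_{-/-}$ of Kähler differentials, using the universal property (\ref{eq:Der}) together with Yoneda, and only afterwards to propagate them to the full De Rham complexes by means of the base change and additivity of the exterior algebra recorded in Proposition \ref{prop:SEproperties}. Throughout I write $C=B\otimes_A B'$ and regard any $C$-supermodule $M$ simultaneously as a $B$- and as a $B'$-supermodule via the structural maps $B\to C$, $b\mapsto b\otimes 1$, and $B'\to C$, $b'\mapsto 1\otimes b'$.

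For the base change in (1) I would first argue that a $B'$-superderivation $D\colon C\to M$ is completely determined by its restriction to $B$. Indeed $D$ kills the image of $B'$, so the graded Leibniz rule applied to $b\otimes b'=(b\otimes 1)(1\otimes b')$ forces $D(b\otimes b')=D(b\otimes 1)\,(1\otimes b')$, and $b\mapsto D(b\otimes 1)$ is an $A$-superderivation of $B$ into $M$; conversely this formula extends any $A$-superderivation of $B$ uniquely. This produces a natural bijection $\SDer_{B'}(C,M)\simeq\SDer_A(B,M)$. Combining (\ref{eq:Der}) with the extension-of-scalars adjunction $\HHom_B(\Omega_{B/A},M)\simeq\HHom_C(\Omega_{B/A}\otimes_B C,M)$ and the identification $\Omega_{B/A}\otimes_B C=\Omega_{B/A}\otimes_A B'$, I obtain $\HHom_C(\Omega_{C/B'},M)\simeq\HHom_C(\Omega_{B/A}\otimes_A B',M)$ naturally in $M$, and Yoneda yields $\Omega_{C/B'}\simeq\Omega_{B/A}\otimes_A B'$. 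For the additivity in (2) the argument is identical except that an $A$-superderivation $D\colon C\to M$ is now determined by the pair of its restrictions to $B$ and to $B'$, giving $\SDer_A(C,M)\simeq\SDer_A(B,M)\oplus\SDer_A(B',M)$ and hence $\Omega_{C/A}\simeq\Omega_{B/A}\otimes_A B'\oplus\Omega_{B'/A}\otimes_A B$.

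To pass to complexes I would apply the exterior algebra to these module isomorphisms. In case (1), base change for the exterior algebra (Proposition \ref{prop:SEproperties}(1)) gives $\bigwedge^\punto_C(\Omega_{B/A}\otimes_B C)\simeq(\bigwedge^\punto_B\Omega_{B/A})\otimes_B C=\Omega^\bullet_{B/A}\otimes_A B'$, which is exactly $\Omega^\bullet_{C/B'}$. In case (2), additivity (Proposition \ref{prop:SEproperties}(2)) turns the direct sum into a tensor product, $\bigwedge^\punto_C(\Omega_{B/A}\otimes_B C\oplus\Omega_{B'/A}\otimes_{B'}C)\simeq\bigwedge^\punto_C(\Omega_{B/A}\otimes_B C)\otimes_C\bigwedge^\punto_C(\Omega_{B'/A}\otimes_{B'}C)$; reapplying base change to each factor and collapsing $(\,\cdot\otimes_B C)\otimes_C(\,\cdot\otimes_{B'}C)$ over $C=B\otimes_A B'$ (so that $\Omega^\bullet_{B/A}\otimes_B C\otimes_{B'}\Omega^\bullet_{B'/A}=\Omega^\bullet_{B/A}\otimes_A\Omega^\bullet_{B'/A}$) yields the claimed isomorphism.

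The main obstacle I anticipate lives entirely in the signs, and it is twofold. First, in splitting superderivations over $C$ one must use the sign-twisted multiplication $(b_1\otimes b_2)(b_1'\otimes b_2')=(-1)^{|b_2|\,|b_1'|}(b_1b_1')\otimes(b_2b_2')$ alongside the graded Leibniz rule, so the extension formulas and their inverses have to be verified to be parity-preserving and correctly $A$-linear. Second, and more delicately, I must check that the module isomorphisms produced above are genuinely isomorphisms of complexes, that is, that they intertwine the De Rham differentials; this reduces to compatibility with the canonical derivation $\di$ in degree $0$, after which the uniqueness of the degree-$1$ antiderivation extending $\di$ with $d^2=0$ forces agreement in all degrees. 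In case (2) this further requires that the right-hand tensor product carries the sign-twisted total differential built into the supercommutative convention for $\Omega^\bullet_{B/A}\otimes_A\Omega^\bullet_{B'/A}$, and matching this against the differential of $\Omega^\bullet_{C/A}$ is the one point that does not come for free from the ordinary commutative case.
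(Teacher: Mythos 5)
Your argument is correct, and the paper in fact states Proposition \ref{prop:Diffproperties} without proof, so there is nothing to diverge from; your route via the identification $\HHom_B(\Omega_{B/A},M)\simeq\SDer_A(B,M)$ of Equation \ref{eq:Der}, splitting or restricting superderivations of $B\otimes_A B'$ and concluding by Yoneda, is exactly the technique the paper itself uses to prove Theorem \ref{thm:relativediff}, and your passage to the exterior algebras via Proposition \ref{prop:SEproperties} is the intended one. The two sign issues you flag (the twisted product on $B\otimes_A B'$ and the Koszul-signed differential on $\Omega^\bullet_{B/A}\otimes_A\Omega^\bullet_{B'/A}$) are indeed the only points requiring verification, and both are discharged by the uniqueness of the degree-$1$ antiderivation extending $\di$, as you indicate.
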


Given an $A$-superderivation $D\colon B\to B$ let us denote by $i_D$ the morphism of $B$-supermodules $i_D\colon\Omega_{B/A}\to B$ given by Equation \ref{eq:Der}. This morphism is called the interior product with $D$ (also the contraction with $D$) and, for any $p\geq 0$, extends in a unique way to a morphism of $B$-supermodules 
$$i_D\colon \Omega^{p+1}_{B/A}\to\Omega^p_{B/A}$$
obeying the graded Leibniz rule

$$i_D(\omega_p\wedge\omega_q)=(i_D\omega_p)\wedge\omega_q+(-1)^p\omega_p\wedge i_D(\omega_q)\,.$$
That is, $i_D$ is the unique $B$-antiderivation of degree $-1$ on the exterior algebra $\Omega^\bullet_{B/A}=\ext^\punto_{B} \Omega_{B/A}$ such that $i_D(db)=Db$. Furthermore, by induction on $p$ is straightforward to show that $i_D\circ i_D=0$. Then, we have a complex $(\Omega^\bullet_{B/A}, i_D)$ of $B$-supermodules.

\begin{definition}[Cartan's formula]
Let $D\colon B\to B$ be an $A$-superderivation. The Lie derivative with respect to $D$ is defined to be the anticommutator of the exterior differential and the interior product with $D$. It is denoted by $D^L$. That is,
\begin{equation}
D^L=i_D\circ \di+\di\circ i_D\,.
\end{equation} 
\end{definition}
Notice that the Lie derivative is the unique $A$-derivation of degree 0 on $\Omega^\bullet_{B/A}$ such that $D^L(b)=Db$ and $D^L(db)=d(Db)$, for all $b\in B$.

\begin{ex}\label{ex:Lie}
The Lie derivative $D^L=i_D\circ\di+\di\circ i_D\colon \Omega^p_{B/A}\to \Omega^p_{B/S}$ corresponding to the homothety field (Example \ref{ex:Eulerfield}) is the multiplication by $n$ on the homogeneous component of $\Omega^p_{B/A}$ of degree $n$, since it is so over $b_n$ and $\di b_n$ for any $b_n\in B$ of degree $n$.
\end{ex}
\subsection{Superschemes and the relative dualizing complex on superschemes}
In order to make the paper as self-consistent as possible, let us recall the notion of superscheme (we shall omit some details, for which we refer to \cite{BR2023}). A locally ringed superspace is a topological space $X$ endowed with a sheaf $\cO_X$ of supercommutative rings such that all stalks $\cO_{X,x}$ are local superrings. A morphism $(X,\cO_X)\to (Y,\cO_Y)$ of locally ringed superspaces consists of a continuous map $f\colon X\to Y$ together with a homogeneous morphism of sheaves of supercommutative rings $f^\#\colon\cO_{{Y}}\to f_*\cO_{{X}}$, such that for any $x\in X$ the induced morphism of local superrings $\cO_{{Y},f(x)}\to \cO_{{X},x}$ is local.

As a fundamental example of locally ringed superspace is the superspectrum of a superring, which is defined as follows. Given a supercommutative ring $A$, we denote by $\bar{A}$ the so-called bosonic reduction (or the body) of $A$, that is, $\bar{A}=A/J$ where $J=A_1^2\oplus A_1$ is the ideal generated by the odd elements. If we consider $A_0$ as a commutative ring, then $A^2_1$ is an ideal of $A_0$ contained in all prime ideals of $A_0$ and the natural projection $A_0\to \bar{A}\simeq A_0/A_1^2$ induces an homeomorphism of topological spaces $\Spec \bar{A}\isom \Spec A_0$ ($\bar{A}$ and $A_0$ differ only by nilpotent elements). Topologically, one defines the superspectrum of $A$ as the spectrum of the bosonic part of $A$. However, with respect to the algebraic structure, the structure sheaf (the localization sheaf) is a sheaf of supercommutative rings defined as follows:  consider $X=\Spec\,\bar{A}$ and for any non-nilpotent even element $f\in A$ set $U_f=X-(f)_0$ and let $A_f$ be the localization of $A$ at the multiplicative system defined by $f$. One gets a sheaf, say $\widetilde{A}$, of supercommutaive rings on $X$ by letting $\widetilde{A}(U_f)=A_f$.

\begin{definition}
An affine superscheme is a pair of the form $\SSpec A=(\Spec \bar{A}, \wt{A})$ where $A$ is a supercommutative ring and $\wt{A}$ its localization sheaf defined above. A locally ringed superspace is called a superscheme if it is locally isomorphic to an affine superscheme. As convention, all the superschemes are supposed to be locally noetherian. Morphisms of superschemes are just morphisms of locally ringed superspaces.
\end{definition}
\begin{ex}
Let $B(m,n)=A[x_1,\cdots,x_{m},\theta_1,\dots,\theta_n]$ be the $A$-supercommutative algebra of polynomials in $m$ even variables $x_i$ and $n$ odd variables $\theta_i$. The affine $(m,n)$-superspace over $A$  is defined as the superspectrum of the superring $B(m,n)$, that is, the superscheme
$$\bbA^{m|n}_A=(\Spec\,\bar{A}[x_1,\dots,x_{m}],\widetilde{B(m,n)})\,.$$
\end{ex}
\begin{ex}
Let $B(m,n)=A[x_0,\dots,x_{m},\theta_1,\dots,\theta_n]$ and 
put $\bbP_{\bar{A}}^m=\Proj\,{\bar{A}}[x_0,\dots,x_{m}]$. Define $\widetilde{B(m,n)}^h$ to be the sheaf of homogeneous localization, that is, the sheaf of superrings whose sections on each basic open subset $U_{x_i}^h=\bbP_{\bar{A}}^m-\big(x_i\big)^h_0$ are given by 
$$
\begin{aligned}
\widetilde{B(m,n)}^h(U_{x_i}^h)&=\big[B(m,n)_{x_i}\big]_0=\\&=\Big\{ \frac{P_d(x_0,\dots,x_{m},\theta_1,\dots,\theta_n)}{x_i^d}\colon P_d(x_j|\theta_k) \text{ is } \bbZ\text{-homogeneous of degree }d\Big\}=\\
&={A[\frac{x_0}{x_i},\dots,\widehat{\frac{x_i}{x_i}},\dots,\frac{x_m}{x_i},\frac{\theta_1}{x_i},\dots\frac{\theta_n}{x_i}]}\,. 
\end{aligned}
$$
That is, if we put $B=B(m,n)$, then on each basic open subset $U_{x_i}$ we  have
$${\widetilde{B}^h\,}_{|_{U_{x_i}}}=\wt{[B_{x_i}]_0}\,.$$
The projective $(m,n)$-superspace over $A$, denoted by $\bbP^{m|n}_A$, is defined as the projective superspectrum of $B(m,n)$, that is, the superscheme $$\SProj_AB(m,n)=(\Proj\,{\bar{A}}[x_0,\dots,x_{m}], \widetilde{B(m,n)}^h)\,.$$ 
\end{ex}

Since $J$ consists of nilpotent elements, the underlying topological space of $\bbA^{m|0}_A$ is homeomorphic to $\Spec A[x_0,\cdots,x_{m}]$, so that, $\bbA^{m|0}_A$ and $\bbP^{m|0}_A$ are the ordinary affine space $\bbA^{m}$ and the ordinary projective space $\bbP^m$, respectively. Moreover, $\bbA^{m|n}_A =\bbA^{m|0}\times_{\SSpec A}\bbA^{0|n}$. For simplicity, we shall omit the subcript $A$. 

The closed immersion of affine superschemes $s_0\colon \SSpec\, A\hookrightarrow \bbA^{m+1|n}$ induced by the exact sequence
$$0\to (x_0,\dots,x_m,\theta_1\dots,\theta_n)\to A[x_0,\cdots,x_{m},\theta_1,\cdots,\theta_n]\to A\to 0$$ 
is called the zero section of $\bbA^{m+1|n}\to  \SSpec\, A$. Denoting its image by $\{0\}$ and by $\bbA^{m+1|n}-\{0\}$ the corresponding complementary open subset, there exists a natural projection (given by the action of the multiplicative group) 
\begin{equation}\label{eq:affinequotient}
\pi\colon \bbA^{m+1|n}-\{0\}\to \bbP^{m|n}
\end{equation}

In general, given an $A$-module $M$, the image of the natural map $M\otimes_A S^\punto_A M\xrightarrow{\theta} S^\punto_AM$ is the ideal generated by $M$, that is, $I=\bigoplus_{i>0}S^i_AM$ and there exists an exact sequence
$$0\to I\to  S^\punto_A M\xrightarrow{s_0} S^\punto_A M/I\isom A\to 0$$
For every $A$-supermodule $M$ let us denote by $\mathbb{V}(M)=\Spec(S^\punto_A M)$ the linear superscheme associated to $M$.
\begin{definition}
Let $M$ be an $A$-supermodule. The linear superscheme associated to $M$ is the affine superscheme
$$\mathbb{V}(M)=\SSpec(S^\punto_A M)\xrightarrow{\pi}{\SSpec A}$$
where the morphism $\pi\colon\mathbb{V}(M)\to \SSpec A$ is the natural one induced by the inclusion $A\hookrightarrow B$. The zero section of $\pi\colon\mathbb{V}(M)\to \SSpec A$ is defined as the closed immersion $$\sigma_0\colon (I)_0=\SSpec A\hookrightarrow \mathbb{V}(M)$$ of superschemes induced by $S^\punto_A M\xrightarrow{s_0}  A\,.$
\end{definition}

We finish this section recalling the notion of relative dualizing complex and some remarkable properties. Let $i\colon X\hookrightarrow Y$ be a closed immersion of codimension $d$ between two (super) schemes.  The direct image
\[i_*\colon \Dqc(X)\to \Dqc(Y)\] has a right adjoint
\[ i^!\colon \Dqc(Y)\to \Dqc(X) \] and one has
\[ i_*i^!\M=\RR\HHom^\punto_{\OO_Y}(i_*\OO_X,\M) \] because the functor $i_*i^!$ is right adjoint of the functor $i_*\LL i^*$ and this one is tensoring by $i_*\OO_X$ by projection formula.
If $j\colon T\hookrightarrow X$ is another closed immersion, then $i^!\circ j^!=(j\circ i)^!$.

The complex $i^!\OO_Y$ is denoted by $D_{X/Y}$ and named the {relative dualizing complex} of $X$ over $Y$. One has
\[ i_* H^i(D_{ X/Y})=\HExt^i_{\OO_Y}(i_*\OO_X,\OO_Y).\] In particular, the sheaf $H^d(D_{X/Y})$ is called the {relative dualizing sheaf} of $X$ over $Y$ and denoted by $\omega_{X/Y}$. One has: $i_*\omega_{X/Y}=\HExt^d_{\OO_Y}(i_*\OO_X,\OO_Y)$.

\begin{prop} [Transitivity]\label{transitivity} For any   complex $\E$ on $Y$ one has a natural  morphism
\[ \LL i^*\E\overset\LL\otimes_{\OO_X}D_{X/Y}\to i^!\E \] which is an isomorphism if $\E$ is perfect.
In particular, if $j\colon Y\hookrightarrow Z$ is a closed immersion and $D_{Y/Z}$ is perfect, one obtains an isomorphism
\[  \LL i^*D_{Y/Z}\overset\LL\otimes_{\OO_X} D_{X/Y}  =D_{X/Z}.\]
\end{prop}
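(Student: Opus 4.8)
The plan is to realise the first morphism as a special case of a projection-type formula for the functor $i^!$. Concretely, for any two complexes $\E,\cF$ on $Y$ I would first construct a natural morphism
\[
\LL i^*\E\overset\LL\otimes_{\OO_X} i^!\cF\longrightarrow i^!\bigl(\E\overset\LL\otimes_{\OO_Y}\cF\bigr),
\]
and then specialise to $\cF=\OO_Y$, using $i^!\OO_Y=D_{X/Y}$ and $\E\overset\LL\otimes\OO_Y=\E$. To build this map I would use the adjunction $i_*\dashv i^!$: it suffices to give a morphism $i_*\bigl(\LL i^*\E\overset\LL\otimes i^!\cF\bigr)\to \E\overset\LL\otimes\cF$. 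By the projection formula (the same one invoked in the excerpt to identify $i_*i^!$) the left-hand side is naturally $\E\overset\LL\otimes_{\OO_Y} i_*i^!\cF$, and composing $\id_\E$ tensored with the counit $i_*i^!\cF\to\cF$ yields the required morphism; passing back across the adjunction gives the natural transformation above.

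Next I would prove that the resulting map $\LL i^*\E\overset\LL\otimes D_{X/Y}\to i^!\E$ is an isomorphism for perfect $\E$ by d\'evissage. Whether a morphism is an isomorphism may be checked locally on $Y$, where a perfect complex is quasi-isomorphic to a bounded complex of free $\OO_Y$-modules of finite rank. Both $\E\mapsto \LL i^*\E\overset\LL\otimes D_{X/Y}$ and $\E\mapsto i^!\E$ are exact (triangulated) functors, and the transformation between them is compatible with shifts and mapping cones; hence the full subcategory of those $\E$ for which the map is an isomorphism is a triangulated subcategory closed under direct summands. It therefore suffices to check the base case $\E=\OO_Y$, where, by construction, the map becomes the canonical identification $\OO_X\overset\LL\otimes D_{X/Y}=D_{X/Y}=i^!\OO_Y$. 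Since every perfect complex is built from $\OO_Y$ by finitely many shifts, cones and retracts, this settles the general perfect case.

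Finally, for the transitivity statement I would apply the isomorphism just established with $\E=D_{Y/Z}$, which is perfect by hypothesis, obtaining
\[
\LL i^*D_{Y/Z}\overset\LL\otimes_{\OO_X} D_{X/Y}\iso i^!D_{Y/Z}.
\]
Writing $D_{Y/Z}=j^!\OO_Z$ and using the composition rule $i^!\circ j^!=(j\circ i)^!$ recorded in the excerpt, the right-hand side becomes $i^!j^!\OO_Z=(j\circ i)^!\OO_Z=D_{X/Z}$, which is the desired identification.

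I expect the main obstacle to be the careful construction of the natural morphism together with the verification of its base case: one must check that the projection formula and the counit identifications are compatible so that, for $\E=\OO_Y$, the transformation is literally the canonical isomorphism $\OO_X\overset\LL\otimes D_{X/Y}\iso D_{X/Y}$ --- only then does d\'evissage propagate the isomorphism property. In the supercommutative setting this also requires keeping track of the Koszul signs entering the projection formula and the symmetry of $\overset\LL\otimes$, so that all the identifications used are genuinely natural; once naturality is secured, the triangulated d\'evissage argument is formal.
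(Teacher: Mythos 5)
Your construction of the morphism is exactly the paper's: projection formula plus the counit $i_*i^!\cF\to\cF$ followed by adjunction (your general $\cF$ specialised to $\OO_Y$ recovers the paper's map verbatim), and the transitivity step via $\E=D_{Y/Z}$ and $i^!\circ j^!=(j\circ i)^!$ is likewise identical. The only divergence is in verifying the isomorphism for perfect $\E$: the paper applies $i_*$ (conservative for a closed immersion) and invokes the standard evaluation isomorphism $\E\overset\LL\otimes\RR\HHom_{\OO_Y}(i_*\OO_X,\OO_Y)\to\RR\HHom_{\OO_Y}(i_*\OO_X,\E)$ for perfect $\E$, whereas you run the triangulated d\'evissage down to $\E=\OO_Y$ directly --- these are the same argument in different packaging, and your identification of the base case as the delicate point is accurate.
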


\begin{proof} The projection formula $i_*(\LL i^*\E\overset\LL\otimes_{\OO_X}D_{X/Y})= \E\overset\LL\otimes_{\OO_Y}i_*D_{X/Y}$ and the unit morphism $i_*D_{X/Y}\to \OO_Y$ define a morphism $i_*(\LL i^*\E\overset\LL\otimes_{\OO_X}D_{X/Y})\to\E$, hence, by adjunction, a morphism $\LL i^*\E\overset\LL\otimes_{\OO_X}D_{X/Y}\to i^!\E$. After applying $i_*$ (and projection formula) one obtains the natural morphism $$\E\overset\LL\otimes\RR\HHom_{\OO_Y}(i_*\OO_X,\OO_Y)\to \RR\HHom_{\OO_Y}(i_*\OO_X,\E)$$ which is an isomorphism if $\E$ is perfect.

If $j\colon Y\hookrightarrow Z$ is a closed immersion and $D_{Y/Z}$ is perfect, then
\[ D_{X/Z}=(j\circ i)^!\OO_Z= i^!(j^!\OO_Z)=i^!D_{Y/Z}= \LL i^*D_{Y/Z}\overset\LL\otimes_{\OO_X} D_{X/Y}.\]
\end{proof}

\begin{prop}[Flat base change]\label{flatbasechange}
   Let us consider a cartesian square
\[ \xymatrix{ \overline X\ar[r]^{\bar i}\ar[d]_{f_X} & \overline Y\ar[d]^f \\ X\ar[r]^i & Y}\] with $f$ flat. Then, one has a natural isomorphism
of functors
\[i^!_f\colon  f_X^*i^!\M\to {\bar i}^!f^*\M.\] In particular, one has an isomorphism
\[ D_{X/Y}(f)\colon f_X^*D_{X/Y}\to D_{\overline X /\overline Y }\] and then a sheaf isomorphism
\[\omega_{X/Y}(f)\colon f_X^*\omega_{X/Y}\to \omega_{\overline X/\overline Y}.\]
\end{prop}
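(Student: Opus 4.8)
The plan is to reduce the statement to the fact that, when $f$ is flat, the functor $f^\ast$ commutes with the local sheaf-$\RR\HHom$ that appears in the formula $i_\ast i^!\M=\RR\HHom^\punto_{\OO_Y}(i_\ast\OO_X,\M)$ recalled above. The proof then splits into the construction of the base-change transformation $i^!_f$ and the verification that it is an isomorphism.

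First I would construct $i^!_f$ by adjunction. Since $\bar i^!$ is right adjoint to $\bar i_\ast$, giving a morphism $f_X^\ast i^!\M\to\bar i^! f^\ast\M$ is equivalent to giving a morphism $\bar i_\ast f_X^\ast i^!\M\to f^\ast\M$. Ordinary flat base change for the direct image along the cartesian square (valid because $f$ is flat and $i$ is a closed immersion) furnishes a canonical isomorphism $\bar i_\ast f_X^\ast\cong f^\ast i_\ast$, and composing it with the pullback $f^\ast(\varepsilon)$ of the counit $\varepsilon\colon i_\ast i^!\M\to\M$ yields $\bar i_\ast f_X^\ast i^!\M\cong f^\ast i_\ast i^!\M\to f^\ast\M$. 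Its adjoint is the wanted natural transformation $i^!_f$, natural in $\M$ by construction.

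To show that $i^!_f$ is an isomorphism I would apply $\bar i_\ast$, which is conservative since it is the direct image of a closed immersion, and reduce to proving that the induced map
$$ f^\ast\,\RR\HHom^\punto_{\OO_Y}(i_\ast\OO_X,\M)\;\longrightarrow\;\RR\HHom^\punto_{\OO_{\bar Y}}(\bar i_\ast\OO_{\bar X},\,f^\ast\M) $$
is an isomorphism; here I have used the formula for $i_\ast i^!$, the identification $\bar i_\ast\OO_{\bar X}\cong f^\ast i_\ast\OO_X$, and the base-change isomorphism for the direct image once more. As the question is local on $Y$ and $\bar Y$ I would pass to the affine situation of a flat morphism of superrings $A\to A'$ with $B$ an $A$-superalgebra and $B'=B\otimes_A A'$, in which $i_\ast\OO_X$ is the cyclic module $B/J$ and the assertion becomes that $(-)\otimes_A A'$ commutes with $\RR\Hom_B(B/J,-)$.

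The main obstacle, and the point at which the super setting genuinely departs from the classical one, is that $B/J$ need \emph{not} be perfect over $B$: an odd generator $\theta$ of $J$ satisfies $\theta^2=0$ and is a zero divisor, so the Koszul-type resolution of $B/J$ is unbounded below and $i_\ast\OO_X$ is in general only pseudo-coherent. I would therefore avoid perfectness and argue degreewise. Since $B$ is noetherian and $J$ is finitely generated, $B/J$ admits a resolution $P^\punto\to B/J$ by \emph{finitely generated} free $B$-modules, with each term finite but the complex possibly infinite. For such a resolution $\Hom_B(P^i,M)$ is a finite direct sum of copies of $M$, so $(-)\otimes_A A'$ commutes with it; and flatness of $A'$ over $A$ ensures both that $P^\punto\otimes_A A'$ is again a resolution of $(B/J)\otimes_A A'=\bar i_\ast\OO_{\bar X}$ and that tensoring commutes with the cohomology of $\Hom_B(P^\punto,M)$. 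This gives the displayed isomorphism, hence that $i^!_f$ is an isomorphism. Finally, specializing to $\M=\OO_Y$ produces $D_{X/Y}(f)\colon f_X^\ast D_{X/Y}\iso D_{\bar X/\bar Y}$, and applying the exact functor $f_X^\ast$ to the degree-$d$ cohomology sheaf — which commutes with $H^d$ exactly because $f_X$ is flat — yields the sheaf isomorphism $\omega_{X/Y}(f)$.
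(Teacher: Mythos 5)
Your proposal is correct and follows essentially the same route as the paper's sketch: construct $i^!_f$ by adjunction from the base-change isomorphism $\bar i_*f_X^*\cong f^*i_*$ and the counit $i_*i^!\M\to\M$, then verify it is an isomorphism after applying $\bar i_*$ by pulling back a finite locally free resolution of $i_*\OO_X$ along the flat morphism. Your additional observation that $i_*\OO_X$ need not be perfect in the super setting, so the resolution may be unbounded below and one must argue degreewise using flatness, is a useful point that the paper's sketch leaves implicit but does not change the argument.
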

\begin{proof} (sketch): The base change isomorphism $f^*i_*={\bar i}_*f_X^*$ induces an isomorphism $f^*i_*i^!={\bar i}_*f_X^*i^!$. On the other hand, the unit morphism $i_*i^!\to \id$, induces a morphism $f^*i_*i^!\to  f^*$. Thus, one obtains a morphism ${\bar i}_*f_X^*i^!\to f^*$ and, by adjunction, a morphism $f_X^*i^! \to {\bar i}^!f^*$. Applying ${\bar i}_*$, one obtains the natural morphism \[ f^*\RR\HHom_{\OO_Y}(i_*\OO_X,\underline\quad)\to \RR\HHom_{\OO_{\overline Y}}({\bar i}_*\OO_{\overline X},f^*(\underline\quad))\] which is an isomorphism, because   if $\E \to i_*\OO_X$ is a resolution of $i_*\OO_X$ by locally free finite $\OO_Y$-modules, then $f^*\E $ is a resolution of ${\bar i}_*\OO_{\overline X}$ by locally free finite $\OO_{\overline Y}$-modules and one has an isomorphism of complexes 
\[ f^*\HHom^\punto_{\OO_Y}(\E ,\M)\overset\sim\to \HHom^\punto_{\OO_{\overline Y}}(f^*\E ,f^*\M).\]
\end{proof}

\begin{remark} Applying ${\bar i}_*$ (and base change) to $i^!_f\colon  f_X^*i^!\M\to {\bar i}^!f^*\M$ one obtains the natural isomorphism
\[ f^*\RR\HHom_{\OO_Y}(i_*\OO_X,\M)\overset\sim\to \RR\HHom_{\OO_{\overline Y}}({\bar i}_*\OO_{\overline X},f^*\M).\]
\end{remark}

We leave the reader to check the following:

\begin{remarks}\label{remarks-transitivity} The flat base change isomorphism $i^!_f$ is compatible with transitivity and functorial on $f$ and $i$: That is:

 (1) By Proposition \ref{transitivity}, for any   complex $\E$ on $Y$ one has morphisms (let us denote $\E_{\overline X}=f_X^*\LL i^*\E = \LL {\bar i}^* f^*\E$) 
\[\aligned \E_{\overline X}\overset\LL \otimes f_X^*D_{Y/X} &= f_X^*(\LL i^*\E\overset\LL\otimes D_{Y/X})\overset{f_X^*(\ref{transitivity})}\longrightarrow  f_X^*i^!\E   \\
\E_{\overline X}\overset\LL \otimes D_{\overline X/\overline Y} &= \LL {\bar i}^* f^*\E\overset\LL\otimes D_{\overline X/\overline Y} \overset{\ref{transitivity}}\longrightarrow  {\bar i}^! f^*\E.\endaligned\] These  morphisms are compatible with the ``vertical'' morphisms $\id\otimes D(f)$ and $i^!_f$ (i.e., one has a commutative diagram).

(2) Let $T\overset j\hookrightarrow X$ be another closed immersion, $\overline T =T\times_X \overline X$ and $f_T\colon \overline T \to T$. One has a commutative diagram
\[\xymatrix{ f_T^*j^!i^!\M\ar[rr]^{(i\circ j)^!_f}\ar[dr]_{j^!_{f_X}} & & {\bar j}^!{\bar i}^!f^*\E \\ & {\bar j}^!f_X^* i^!\M \ar[ur]_{{\bar j}^!(i^!_f)} &
}
\]

(3) Let us consider a second cartesian square \[ \xymatrix{ \widetilde X \ar[r]^{\widetilde i}\ar[d]_{g_{\overline X}} & \widetilde Y \ar[d]^g \\ \overline X \ar[r]^{\bar i} & \overline Y }\] with $g$ flat.   Then one has a commutative diagram 
\[\xymatrix{ g_{\overline X}^* f_X^* i^!\M =(f_X\circ g_{\overline X})^*i^!\M\ar[dr]_{g_{\overline X}^*(i^!_f)} \ar[rr]^{i^!_{g\circ f}} & & {\widetilde i}^!(f\circ g)^*\M= {\widetilde i}^!g^*f^*\M \\
 & g_{\overline X}^* {\bar i}^!f^*\M \ar[ur]_{{\bar i}^!_g} & 
}
\] i.e., $ i^!_{g\circ f}= {\bar i}^!_g\circ g_{\overline X}^*(i^!_f)$.
\end{remarks}

\section{De Rham and Koszul complexes of a supermodule}\label{sec:DeRahmKos}
Let $M$ be a $A$-supermodule. For the sake of simplicity we shall denote $B=S^\punto_A M$ and by $B_n=S^n M$ its homogeneous component of degree $n$. The tensor product $B\otimes_{A} B$ is a graded $A$-supercommutative algebra, where the $\bbZ$-grading is $$(B\otimes_{A}B)_n=\underset{p+q=n}\oplus B_p\otimes_{A}B_q\,.$$ Moreover, the diagonal morphism $B\otimes_{A}B\to B$ is a (degree $0$) homogeneous morphism of graded superalgebras, so that the kernel $\Delta$ is a homogeneous ideal and $\Delta/\Delta^2=\Omega_{B/A}$ is a graded $B$-supermodule. If $b_p$ and $b_q$ are homogeneous elements of $B$ of degree $p$ and $q$ respectively, then $(\di b_q)b_p$ is an element of $\Omega_{B/A}$ of degree $p+q$. Following the above notation, $\Omega^p_{B/A}=\bigwedge^p_{B} \Omega_{B/A}$ denotes the $p$-th exterior power of $\Omega_{B/A}$ and is a graded $B$-supermodule in a natural way. We also denote by $B[-1]$ the shift with respect to the $\bbZ$-grading structure and by $[\Omega_{B/A}]_n$ the $\bbZ$-homogeneous part of $\Omega_{B/A}$ of degree $n$.

\begin{thm}\label{thm:relativediff}
Let $M$ be an $A$-supermodule and $B=S^\punto M$. The natural morphism of graded $B$-supermodules
$$\aligned M\otimes_A B[-1]&\to \Omega_{B/A}\\ m\otimes b&\mapsto (\di m)b\endaligned $$
is an isomorphism. Hence $\Omega^p_{B/A}\simeq \bigwedge^p_A M\otimes_A B[-p]$.
\end{thm}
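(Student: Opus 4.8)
The plan is to prove the isomorphism by a Yoneda argument that reduces everything to the universal property of the symmetric superalgebra with respect to superderivations, mimicking the ordinary-case proof while keeping track of Koszul signs. Write $\psi\colon M\otimes_A B[-1]\to\Omega_{B/A}$ for the map $m\otimes b\mapsto(\di m)b$. First I would check that $\psi$ is a well-defined homogeneous morphism of graded $B$-supermodules: it is $A$-bilinear and $B$-linear on the right, $\di$ sends the generators $m\in M$ into the degree-$1$ part $[\Omega_{B/A}]_1$, and the shift $[-1]$ compensates this jump so that $\psi$ is homogeneous of degree $0$. To prove $\psi$ is an isomorphism it then suffices, by Yoneda, to show that for every $B$-supermodule $N$ the induced map
$$\psi^*\colon\HHom_B(\Omega_{B/A},N)\to\HHom_B(M\otimes_A B[-1],N)$$
is bijective.

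Next I would identify both sides. The source is $\SDer_A(B,N)$ via the canonical isomorphism $g\mapsto g\circ\di$ of Equation \ref{eq:Der}. The target is computed by the extension-of-scalars adjunction along $A\to B$, namely $\HHom_B(M\otimes_A B,N)\simeq\HHom_A(M,N)$, the shift $[-1]$ only relabeling the grading. Chasing $g\colon\Omega_{B/A}\to N$ through these identifications, $\psi^*(g)$ sends $m\otimes b\mapsto g((\di m)b)=(g\circ\di)(m)\cdot b$, so under the adjunction it corresponds to the $A$-linear map $m\mapsto(g\circ\di)(m)=D(m)$ with $D=g\circ\di$. Hence, after both identifications, $\psi^*$ becomes the restriction map
$$\SDer_A(B,N)\longrightarrow\HHom_A(M,N),\qquad D\mapsto D|_{S^1_A M}=D|_M,$$
and the theorem reduces to showing that this restriction is a bijection.

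The crux, and the step I expect to require the most care, is this universal property of $S^\punto_A M$. Injectivity is immediate, since $B$ is generated as an $A$-algebra by $M$ and a superderivation is determined on generators by the Leibniz rule. For surjectivity I would take $\phi\in\HHom_A(M,N)$ and extend it to $T^\punto_A(M)$ by the graded Leibniz rule $D(m_1\otimes\cdots\otimes m_n)=\sum_i(-1)^{\epsilon_i}m_1\cdots\phi(m_i)\cdots m_n$, where $\epsilon_i$ is the parity sign from moving $\phi$ past $m_1,\dots,m_{i-1}$, and then verify that $D$ descends to $S^\punto_A M=T^\punto_A(M)/I_M$. For the commutation relation $m\otimes m'-(-1)^{|m||m'|}m'\otimes m$ the two contributions cancel because $N$ is a supermodule and the Koszul signs match. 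The subtle case is the relation $m\otimes m=0$ for odd $m$: the super-Leibniz rule gives $D(m\cdot m)=(Dm)\cdot m+(-1)^{|D||m|}m\cdot(Dm)$, and rewriting the first summand with the two-sided action identity $n\cdot b=(-1)^{|b||n|}b\cdot n$ (using $|Dm|=|D|+1$) turns it into $(-1)^{|D|+1}m\cdot(Dm)$, the exact negative of the second summand, so $D(m^2)=0$. This sign cancellation is precisely what makes the construction go through in the supercommutative setting and is the heart of the argument.

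Finally, the identity $\Omega^p_{B/A}\simeq\bigwedge^p_A M\otimes_A B[-p]$ follows formally. Taking $p$-th exterior powers over $B$ of $\Omega_{B/A}\simeq M\otimes_A B[-1]$ and applying base change for exterior powers (Proposition \ref{prop:SEproperties}) along $A\to B$ gives $\bigwedge^p_B(M\otimes_A B)\simeq(\bigwedge^p_A M)\otimes_A B$; since each of the $p$ wedge factors $\di m$ lies in degree $1$, the total grading shift is $[-p]$, yielding the stated formula.
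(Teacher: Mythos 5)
Your proposal is correct and follows essentially the same route as the paper: both reduce, via the representing property of $\Omega_{B/A}$ for superderivations (Equation \ref{eq:Der}) and the extension-of-scalars adjunction, to the universal property that restriction gives a bijection $\SDer_A(B,N)\isom\HHom_A(M,N)$, with the inverse given by extending along the Leibniz rule. The only difference is that you spell out the descent of the extended derivation through the relations defining $S^\punto_A M$ (including the sign check for $m\otimes m$ with $m$ odd), which the paper leaves implicit.
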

\begin{proof}
By Equation \ref{eq:Der} and base change, it is enough to show that,  for any $B$-supermodule $N$,  $\HHom_A(M,N)$ is isomorphic as $B$-supermodule to $\SDer_A(B,N)$. Any  morphism of $A$-supermodules $f\colon M\to N$ induces a superderivation $S^\punto_A M\to N$, defined by the Leibniz rule. Conversely, if $D\colon S^\punto_A  M\to N$ is a superderivation, then the restriction  to $M$ defines a morphism $M\to N$ of $A$-supermodules. Both  assignments are inverse of each other. In particular, for $N=\Omega_{B/A}$, the canonical derivation $\di\colon B\to \Omega_{B/A}$ gives rise to the isomorphism $M\otimes _A B\to \Omega_{B/A}$ of $B$-supermodules, which is degree zero homogeneous after shifting B by $-1$.
\end{proof}

Let $D\colon B\to B$ be the superderivation consisting in multiplication by $n$ on $B_n=S^n_A M$. For each $p$, the interior product  $i_D\colon \Omega^{p}_{B/A}\to \Omega^{p-1}_{B/A}$ is a homogeneous of degree zero and parity preserving morphism of graded $B$-supermodules. On the other hand, the exterior differential $\di\colon \Omega^p_{B/A}\to \Omega^{p+1}_{B/A}$ is a homogeneous of degree zero and parity preserving morphism of graded $A$-supermodules, but not $B$-linear. Then one has the super Koszul and the super De Rham complexes:

\begin{definition}\label{def:KoszulDeRham}
Let $A$ be a supercommutative ring and $M$ an $A$-supermodule.
\begin{enumerate}
\item  The super Koszul complex associated to $M$, denoted by $\SKos_A (M)$, is the complex of graded $B$-supermodules $(\Omega^{\bullet}_{B/A}, i_D)$:
\begin{equation}\label{eq:relativeKoszul}
\xymatrix@C=17pt{
\cdots \ar[r] & \Omega^{p}_{B/A}\ar[r]^(.5){i_D}&\Omega^{p-1}_{B/A}\ar[r]^(.6){i_D}&\cdots\ar[r]^(.4){i_D}&\Omega_{B/A}\ar[r]^(.55){i_D}& B\ar[r]  & 0
}
\end{equation}

\item The super  De Rham complex of $M$ is the complex of graded $A$-supermodules $(\Omega^{\bullet}_{B/A}, \di)$:

 $$\SDeRham_A(M)\equiv \xymatrix@C=17pt{
0 \ar[r] &B\ar[r]^(.35)\di & \Omega_{B/A}\ar[r]^(.55){\di}&\cdots\ar[r]^(.4){\di}&\Omega^{p}_{B/A}\ar[r]^(.45){\di}&\Omega^{p+1}_{B/A}\ar[r]^(.55){\di}&\cdots
}$$
\end{enumerate}
There exist natural morphisms $A\to \SDeRham_A (M)$ and $\SKos_A (M)\to A$ induced, respectively, by the natural inclusion $A\to B$ and the cokernel of $i_D\colon \Omega_{B/A}\to  B$, that is, the projection $S^\punto_A M\to A$ on the zero degree component. 
\end{definition}

Taking the homogeneous components of degree $n\geq 0$ and using the Theorem \ref{thm:relativediff} we obtain the complexes of $A$-supermodules

\begin{equation*}
\SKos_A (M)_n\equiv \xymatrix@C=17pt{
0\ar[r]& \bigwedge^{n}_A M\ar[r]&\cdots\ar[r]&  \bigwedge^{p}_A M\otimes_{A} S_{A}^{n-p}M\ar[r]&\cdots\ar[r]  &S_{A}^n M\ar[r] & 0
}
\end{equation*}
and 
$$\SDeRham_A(M)_n\equiv \xymatrix@C=17pt{
0 \ar[r] &S^n_A M\ar[r]&\cdots\ar[r]&   \bigwedge^{p}_A M\otimes_{A} S_{A}^{n-p}M\ar[r]&\cdots \ar[r] &\bigwedge^{n}_A M\ar[r] & 0
}$$
and isomorphisms of complexes of $A$-supermodules 
$$\SKos_A (M)\simeq \underset{n\geq 0}\oplus\SKos_A (M)_n\quad \SDeRham_A (M)\simeq \underset{n\geq 0}\oplus\SDeRham_A (M)_n\,.$$

\begin{remark}\label{rem:Koszul-DeRham}
If $L=A^p$ is a free module of rank $(p\vert 0)$, then, taking into account Equation \ref{eq:simetricotwisted}, one obtains
$$
\aligned \SKos_A(\Pi L)_n &\simeq \Pi^n \SDeRham_A(L)_n [n]
\\ \SDeRham_A(\Pi L)_n &\simeq \Pi^n \SKos_A(L)_n [-n].
\endaligned
$$
\end{remark}

By Propositions \ref{prop:SEproperties} and \ref{prop:Diffproperties} the following properties hold:
\begin{prop}\label{prop:KosProperties}
\hspace{2em}
\begin{enumerate}
\item Base change: For any morphism of superrings $A\to A'$ there exist natural isomorphisms 
$$
\aligned \SKos_{A'} (M\otimes_A A')
&\simeq  \SKos_A (M)\otimes _A A'\\
\SDeRham_{A'} (M\otimes_A A')&\simeq \SDeRham_A (M)\otimes_A A'
\endaligned$$
\item Additivity: Let $M$ and $N$ be two $A$-supermodules and denote $B=S^\punto_A M$ and $B'=S^\punto_A N$. There exist natural isomorphisms 
$$
\SKos_A (M\oplus N)\simeq \SKos_{A} (M)\otimes_A \SKos_{A} (N)$$
(as complexes of $B\otimes_A B'$-supermodules, with $B=S^\punto_A M$  and  $B'=S^\punto_A N$) and 
$$
\SDeRham_A(M_1\oplus M_2)\simeq \SDeRham_A(M_1)\otimes_A \SDeRham_A(M_2)
$$
\end{enumerate}
\end{prop}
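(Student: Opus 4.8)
The plan is to reduce both statements to Propositions \ref{prop:SEproperties} and \ref{prop:Diffproperties}, supplemented by the observation that the two differentials entering the complexes, the canonical derivation $\di$ and the contraction $i_D$ by the homothety field, are natural constructions compatible with base change and with direct sums. Throughout I keep in mind that $\SKos_A(M)=(\Omega^\bullet_{B/A},i_D)$ and $\SDeRham_A(M)=(\Omega^\bullet_{B/A},\di)$, where $B=S^\punto_A M$ and $D$ is multiplication by $n$ on $B_n=S^n_A M$. The identifications of the underlying graded modules will be handed to us by the cited propositions; the only genuine content is checking that the differentials are carried to the expected ones.

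For base change, put $B=S^\punto_A M$ and $B'=S^\punto_{A'}(M\otimes_A A')$. First I would invoke Proposition \ref{prop:SEproperties}(1) to identify $B'\simeq B\otimes_A A'$ as graded $A'$-superalgebras, and then Proposition \ref{prop:Diffproperties}(1), applied with the second algebra taken to be $A'$, to obtain $\Omega_{B'/A'}\simeq \Omega_{B/A}\otimes_A A'$. Passing to exterior powers this upgrades to $\Omega^p_{B'/A'}\simeq \Omega^p_{B/A}\otimes_A A'$; concretely one reads this off from Theorem \ref{thm:relativediff}, since $\Omega^p_{B/A}\otimes_A A'\simeq \bigwedge^p_A M\otimes_A A'\otimes_{A'}B'[-p]$ and $\bigwedge^p_A M\otimes_A A'\simeq \bigwedge^p_{A'}(M\otimes_A A')$ by base change for exterior powers. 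This identifies the underlying graded modules of the two complexes. The remaining point is that under this identification $\di$ corresponds to $\di\otimes\id$ and $i_D$ to $i_D\otimes\id$: this is immediate because $\di$ is the canonical derivation and the field $D$ (multiplication by $n$ in degree $n$) is manifestly preserved by $-\otimes_A A'$, so that contraction by $D$ goes to $i_D\otimes\id$. Hence both $\SKos$ and $\SDeRham$ base-change as claimed.

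For additivity, set $B=S^\punto_A M$ and $B'=S^\punto_A N$, so that Proposition \ref{prop:SEproperties}(2) gives $S^\punto_A(M\oplus N)\simeq B\otimes_A B'$. The De Rham case is then immediate from Proposition \ref{prop:Diffproperties}(2), which already supplies the isomorphism of complexes $\Omega^\bullet_{B\otimes_A B'/A}\simeq \Omega^\bullet_{B/A}\otimes_A\Omega^\bullet_{B'/A}$; by definition the right-hand side is exactly $\SDeRham_A(M)\otimes_A\SDeRham_A(N)$. For the Koszul case I would use the same module-level isomorphism but track the differential $i_D$. The homothety field on $B\otimes_A B'$ decomposes as $D_{M\oplus N}=D_M\otimes 1+1\otimes D_N$, since it is multiplication by $p+q$ in bidegree $(p,q)$. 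Because $i_{D_{M\oplus N}}$ is the unique $(B\otimes_A B')$-antiderivation of degree $-1$ with $i_{D_{M\oplus N}}(\di z)=D_{M\oplus N}(z)$, and because it agrees on the degree-one generators $\di m$ ($m\in M$) and $\di n$ ($n\in N$) with $i_{D_M}\otimes 1$ and $1\otimes i_{D_N}$ respectively, it must coincide with the total differential $i_{D_M}\otimes 1+(-1)^p\,1\otimes i_{D_N}$ of the tensor product complex $\SKos_A(M)\otimes_A\SKos_A(N)$.

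The main obstacle is precisely this last identification in the Koszul additivity: one must verify that contraction by $D_{M\oplus N}$ is the total differential of the tensor product of complexes, with the correct Koszul sign $(-1)^p$ dictated by the antiderivation rule $i_D(\omega_p\wedge\omega_q)=(i_D\omega_p)\wedge\omega_q+(-1)^p\omega_p\wedge i_D\omega_q$. Since $i_D$ is an antiderivation determined by its values on the degree-one generators, this reduces to the two one-line checks on $\di m$ and $\di n$ above, so the sign bookkeeping, though the delicate point, is ultimately short; the base-change compatibility of the differentials, by contrast, is transparent.
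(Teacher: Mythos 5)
Your proposal is correct and takes essentially the same route as the paper, which offers no written proof beyond citing Propositions \ref{prop:SEproperties} and \ref{prop:Diffproperties}; your argument is the fleshed-out version of that citation. The extra verifications you supply — that $\di$ and $i_D$ are compatible with $-\otimes_A A'$, and that $i_{D_{M\oplus N}}$ agrees with the total differential of the tensor product complex because both are antiderivations of degree $-1$ agreeing on the degree-one generators $\di m$ and $\di n$ — are exactly the details the paper leaves implicit, and your sign bookkeeping is right.
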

As a consequence, when $L$ is a free $A$-supermodule, say $L\simeq A^p\oplus\Pi A^q$, we have
$$
\aligned
\SKos_A(L)&\simeq\SKos_A(A)^{\otimes_A^p}\otimes_A \SKos_A(\pi A)^{\otimes_A^q}\\
\SDeRham_A(L)&\simeq\SDeRham_A(A)^{\otimes_A^p}\otimes_A  \SDeRham_A(\pi A)^{\otimes_A^q}
\endaligned
$$
Therefore, by Remark \ref{rem:Koszul-DeRham}, the explicit description of $\SKos_A(L)$ and $\SDeRham_A (L)$ boils down to considering the following two cases:
\begin{enumerate}
\item
$L\simeq A$, then $B=A[x]$ and one has the exact sequence
\begin{equation}\label{eq:KosAp0}
\xymatrix@R=3pt{
0\ar[r] & A[x]dx\ar[r]^{\cdot\,x} &  A[x]\ar[r] & A\ar[r] &0\\
&p(x)dx\ar@{|->}[r] & xp(x) &&
}
\end{equation}

\item 
$L\simeq \Pi A$, then $B=A[\theta]$, with $\theta$ of degree $1$ and $\vert\theta\vert=1$, hence $\theta^2=0$. Now, $\Omega_{B/A}=B d\theta$ (with $\vert d\theta\vert=1$) and $$\Omega^p_{B/A} = B\cdot (d\theta\wedge\overset p\cdots\wedge d\theta)$$ with $\vert d\theta\wedge\overset p\cdots\wedge d\theta\vert \equiv p \text{ mod } 2$. Let us denote $(d\theta)^p=d\theta\wedge\overset p\cdots\wedge d\theta$. One has $D=\theta\frac{\partial}{\partial\theta}$ and $i_D[ (\di\theta)^p]=p \theta(d\theta)^{p-1}$, so that the super Koszul complex $\SKos_A(\Pi A)$ reads: 
\begin{equation}\label{eq:KosA0q}
\xymatrix{
\cdots\ar[r]& A[\theta](d\theta)^2\ar[r]^{\cdot\, 2\theta}&A[\theta](d\theta)\ar[r]^{\cdot\, \theta} &A[\theta]\ar[r] &0
}
\end{equation}
where $A[\theta](d\theta)^p\xrightarrow{\cdot\, p\theta}A[\theta](d\theta)^{p-1}$ maps $q(\theta)(d\theta)^p$ to $p\theta q(\theta)(d\theta)^{p-1}$.
\end{enumerate}

\begin{remark}
If $A$ is a $k$-algebra, with $k$ a superring without odd elements (that is, a ring in the usual sense) and $M\simeq A^n\oplus\Pi A^m$, then 
$\SKos_A (M)\simeq (\Kos_{k} k^n\otimes_{k} \Kos_{k} {\Pi} k^m)\otimes_{k} A$.
In particular, $\SKos_A (A^n\oplus\Pi A^m)\simeq (\Kos_{\bbZ} \bbZ^n\otimes_{\bbZ} \Kos_{\bbZ} {\Pi} \bbZ^m)\otimes_{\bbZ} A$.
\end{remark}

\subsection{Acyclicity of super Koszul complex and super De Rham complex}\label{sec:aciclicity}
Next aim is to study when $\SKos_A (M)$ and $\SDeRham_A (M)$ are resolutions of $A$, that is, when $\SKos_A (M)\to A$  and $A\to\SDeRham_A (M)$ are quasi-isomorphisms. Note that $\Kos_A (M)_0=A=\SDeRham_A (M)_0$ and $\Kos_A (M)_n=\SDeRham_A (M)_n=0$, for all $n<0$.
\begin{thm}\label{thm:aciclicity}
Let $n$ be a positive integer number. If $n$ is invertible in $A$, then $\SKos_A (M)_n$ and $\SDeRham_A (M)_n$ are acyclic. Consequently, if $A$ is a $\bbQ$-algebra, then $\SKos_A (M)\to A$ is a left resolution of $A$ by  $B$-supermodules and $A\to \SDeRham_A (M)$ is a right resolution of $A$ by $A$-supermodules.
\end{thm}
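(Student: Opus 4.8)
The plan is to exploit Cartan's formula, which the text has already set up: for the homothety field $D$, the Lie derivative $D^L=i_D\circ\di+\di\circ i_D$ acts as multiplication by $n$ on the degree-$n$ homogeneous component of $\Omega^\bullet_{B/A}$ (this is precisely Example \ref{ex:Lie}). The key observation is that this identity provides a contracting homotopy once $n$ is invertible. First I would fix $n\geq 1$ and assume $n^{-1}\in A$, and consider the graded piece $\SKos_A(M)_n$, which is the complex $(\Omega^\bullet_{B/A}, i_D)$ restricted to degree $n$. On this complex define $h=\tfrac{1}{n}\di$. Then Cartan's formula reads $i_D\circ h+h\circ i_D=\tfrac{1}{n}(i_D\circ\di+\di\circ i_D)=\tfrac{1}{n}\cdot n\,\mathrm{Id}=\mathrm{Id}$ on every $[\Omega^p_{B/A}]_n$ with $p>0$.

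The heart of the argument is then a standard homotopy computation, which I would carry out carefully at the two endpoints of the complex. For $p\geq 1$ the identity $i_D h+h i_D=\mathrm{Id}$ shows immediately that any cycle $\omega\in[\Omega^p_{B/A}]_n$ with $i_D\omega=0$ satisfies $\omega=i_D(h\omega)$, hence is a boundary, so $H_p(\SKos_A(M)_n)=0$ for $p\geq 1$. The only places requiring separate attention are the two ends. At the top of $\SKos_A(M)_n$ (the leftmost nonzero term $\bigwedge^n_A M$, concentrated in degree $n$) one must check there is nothing mapping in, and that the homotopy relation still forces acyclicity — here I would simply note that the complex is bounded above in each fixed degree $n$ because $[\Omega^p_{B/A}]_n=\bigwedge^p_A M\otimes_A S^{n-p}_A M$ can be nonzero for arbitrarily large $p$ only through the odd exterior powers, so some care is needed. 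At the augmentation end one must verify that the cokernel of $i_D\colon\Omega_{B/A}\to B$ in degree $n>0$ is exactly the kernel of the homotopy relation, i.e. that $H_0(\SKos_A(M)_n)=0$ for $n>0$ while $H_0(\SKos_A(M)_0)=A$; this follows because $D$ acts as multiplication by $n$ on $B_n=S^n_AM$, so for $n>0$ invertible the map $i_D$ is surjective onto $B_n$, whereas $B_0=A$ is killed. The De Rham case is entirely dual: using $\tfrac{1}{n}i_D$ as a homotopy for the differential $\di$ gives $\di\circ(\tfrac1n i_D)+(\tfrac1n i_D)\circ\di=\mathrm{Id}$ on $[\Omega^p_{B/A}]_n$ for all $p$ with $n>0$, proving $\SDeRham_A(M)_n$ is acyclic.

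The main obstacle I anticipate is precisely the unboundedness of the super Koszul complex from below (equivalently, that $\Omega^p_{B/A}$ can be nonzero for all $p$ because of odd generators, so the complex in fixed degree $n$ need not terminate on the left in the naive way). In the purely even setting acyclicity of each graded strand is trivially a finite homotopy argument, but here one must confirm that the homotopy identity $i_Dh+hi_D=\mathrm{Id}$ genuinely holds on the whole unbounded strand and that no subtle convergence or boundary term spoils the conclusion. Since Cartan's formula is an honest operator identity on $\Omega^\bullet_{B/A}$ (verified on generators $b$ and $\di b$ as recorded after the definition of the Lie derivative), it holds termwise regardless of boundedness, so the homotopy argument applies verbatim in each degree $p$; the unboundedness affects only which cohomology groups could a priori be nonzero, not the vanishing mechanism. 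Having established acyclicity of each graded piece $\SKos_A(M)_n$ and $\SDeRham_A(M)_n$ for $n>0$ invertible, the consequence for $\bbQ$-algebras is immediate: every positive $n$ is invertible, and using the direct-sum decompositions $\SKos_A(M)\simeq\bigoplus_{n\geq0}\SKos_A(M)_n$ and $\SDeRham_A(M)\simeq\bigoplus_{n\geq0}\SDeRham_A(M)_n$ together with $H_0$ in degree $0$ being $A$, one concludes that $\SKos_A(M)\to A$ and $A\to\SDeRham_A(M)$ are quasi-isomorphisms, i.e. the desired left and right resolutions of $A$.
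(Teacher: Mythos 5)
Your proof is correct and follows exactly the paper's argument: Cartan's formula $i_D\circ\di+\di\circ i_D=n\,\mathrm{Id}$ on the degree-$n$ strand (Example \ref{ex:Lie}) yields a contracting homotopy $\tfrac1n\di$ (resp.\ $\tfrac1n i_D$) once $n$ is invertible, so $\SKos_A(M)_n$ and $\SDeRham_A(M)_n$ are homotopically trivial, which is all the paper says. The only minor slip is your remark that $[\Omega^p_{B/A}]_n$ can be nonzero for arbitrarily large $p$: in fact $[\Omega^p_{B/A}]_n=\bigwedge^p_AM\otimes_AS^{n-p}_AM$ vanishes for $p>n$, so each fixed strand is bounded (it is the total complex $\SKos_A(M)$ that is unbounded), but as you correctly observe the homotopy identity holds termwise and makes this point irrelevant.
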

\begin{proof}
By Cartan's formula (see Example \ref{ex:Lie}) $i_D\circ d+d\circ i_D= n Id$ on $[\Omega_{B/A}]_n$, then both $(\Kos_A M)_n$ and $(\SDeRham_A M)_n$ are homotopically trivial and therefore acyclic.
\end{proof}

For (some) free $A$-supermodules one can explicitly compute  the cohomology of its super Koszul complex
\begin{thm}\label{thm:Koszulfreemodule}
Let $L=A^p\oplus \Pi A^q$ a free $A$-supermodule of rank $(p|q)$. One has:
\begin{enumerate}
\item If $q=0$, then $\SKos_A (L)$ is a resolution of $A$.
\item If $p=0$, then $\SDeRham_A (L)$ is a resolution of $A$.
\item If $p=0,\, q=1$, then 
$$H^{-i}\big(\SKos_A (L)\big)=\Pi^iA_{i-tor}\oplus\Pi^{i+1}A/(i+1)A$$
where $A_{i-tor}$ denotes the $i$-torsion of $A$, that is, the elements $a\in A$ such that $ia=0$.
\end{enumerate}
\end{thm}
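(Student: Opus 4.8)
The plan is to handle the three cases in order of increasing difficulty, reducing (1) and (2) to the rank-one building blocks \ref{eq:KosAp0} and \ref{eq:KosA0q}, and reserving the explicit homology computation for (3).

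For (1), I would invoke the additivity of the super Koszul complex (Proposition \ref{prop:KosProperties}) to write $\SKos_A(A^p)\simeq\SKos_A(A)^{\otimes_A^p}$. Each factor is the two-term complex \ref{eq:KosAp0}, namely $0\to A[x]\,dx\xrightarrow{\cdot\,x}A[x]\to 0$, which is a resolution of $A$ by free (hence flat) $A$-modules since $x$ is a nonzerodivisor in $A[x]$. The $A$-flatness of all terms then guarantees that the tensor product of these resolutions is again a resolution of $A\otimes_A\cdots\otimes_A A=A$; equivalently, $\SKos_A(A^p)$ is the classical Koszul complex on the regular sequence $x_1,\dots,x_p$. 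I expect this step to be routine.

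For (2), I would use Remark \ref{rem:Koszul-DeRham}, which for the purely even module $A^q$ gives a degreewise isomorphism $\SDeRham_A(\Pi A^q)_n\simeq\Pi^n\SKos_A(A^q)_n[-n]$. Since acyclicity is insensitive to the shift $[-n]$ and to the parity change $\Pi^n$, part (1) immediately yields that $\SDeRham_A(\Pi A^q)_n$ is acyclic for $n>0$ and equals $A$ for $n=0$; summing over $n$ shows that $A\to\SDeRham_A(\Pi A^q)$ is a resolution.

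The heart of the argument is (3), which I would carry out by a direct computation on the explicit complex \ref{eq:KosA0q}. Writing $A[\theta]=A\oplus A\theta$ (using $\theta^2=0$), each term $\Omega^i_{B/A}=A[\theta](d\theta)^i$ becomes the free $A$-module on $\{(d\theta)^i,\theta(d\theta)^i\}$, placed in cohomological degree $-i$. The differential $\cdot\, i\theta$ sends $a(d\theta)^i+b\theta(d\theta)^i$ to $ia\,\theta(d\theta)^{i-1}$, so in these coordinates it is the map $(a,b)\mapsto(0,ia)$. Hence $\Ker(d_i)$ consists of the pairs with $ia=0$, giving $A_{i-tor}\oplus A$, while $\Img(d_{i+1})=0\oplus(i+1)A$; the quotient is $A_{i-tor}\oplus A/(i+1)A$. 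The only genuinely delicate point, and what I regard as the main obstacle, is the parity bookkeeping: the surviving $A_{i-tor}$ sits in the coefficient of $(d\theta)^i$, of parity $i$, whereas the surviving $A/(i+1)A$ sits in the coefficient of $\theta(d\theta)^i$, of parity $i+1$, which is exactly what produces the shifts $\Pi^i$ and $\Pi^{i+1}$ in the stated formula. I would finish by verifying the extreme cases as sanity checks: $i=0$, where $A_{0-tor}=A$ and $A/A=0$ recover $H^0=A$, and $i=1$, where $A_{1-tor}=0$ gives $H^{-1}=A/2A$.
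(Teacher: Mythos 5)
Your proposal is correct. Parts (1) and (2) follow the paper's own argument: additivity of $\SKos$ together with the rank-one complex \ref{eq:KosAp0} for (1) (the paper packages the tensor-product step as an induction on $p$, checking acyclicity of each graded piece $\SKos_A(A^{p-1})_n$, which is the same flatness argument you give), and Remark \ref{rem:Koszul-DeRham} applied to part (1) for (2). The only divergence is in part (3): the paper first computes the cohomology of $\SDeRham_A(A)$, i.e.\ of $0\to A[x]\xrightarrow{\di}A[x]\di x\to 0$, obtaining $H^0=\bigoplus_{n\geq 0}A_{n\text{-tor}}\,x^n$ and $H^1=\bigoplus_{n>0}(A/nA)\,x^{n-1}\di x$, and then transports the answer to $\SKos_A(\Pi A)$ via Remark \ref{rem:Koszul-DeRham}, whereas you compute directly on the explicit complex \ref{eq:KosA0q}. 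The two computations are the same at bottom (kernel $=$ torsion of the coefficient of $(\di\theta)^i$, image $=$ $(i+1)A$ inside the coefficient of $\theta(\di\theta)^i$), and your parity bookkeeping --- $A_{i\text{-tor}}$ carried by $(\di\theta)^i$ of parity $i$, $A/(i+1)A$ carried by $\theta(\di\theta)^i$ of parity $i+1$ --- agrees exactly with the paper's final identification $H^{-i}=A_{i\text{-tor}}(\di\theta)^i\oplus(A/(i+1)A)\,\theta(\di\theta)^i$; your route is marginally more direct in that it skips the intermediate De Rham computation.
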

\begin{proof}
\begin{enumerate}
\item Since $\SKos_A (A^p)\simeq \SKos_A (A)\otimes_A\SKos_A (A^{p-1})$, we proceed by induction on $p$. For $p=1$, Equation \ref{eq:KosAp0} yields that $\SKos_A(A)$ is a resolution of $A$ by free $A$-supermodules. Assume $p\geq 1$, then by induction hypothesis we have that $\SKos_A (A^{p-1})_n$ is an acyclic complex of free $A$-supermodules, for all $n>0$. Therefore $[\SKos_A A\otimes_A\SKos_A A^{p-1}]_n$ is also acyclic for all $n>0$ and the result holds.
\item Follows from case $(1)$ and Remark \ref{rem:Koszul-DeRham}.
\item The De Rham complex of $A$ is 
$$0\to A[x]\xrightarrow{\di} A[x]dx\to 0$$ 
whose cohomology is
$$H^0\big(\SDeRham_A (A)\big) =\bigoplus_{n\geq 0}A_{n-\text{tor}}\, x^n\,\quad\text{ and }\, \quad H^1\big(\SDeRham_A (A)\big)=\bigoplus_{n>0} (A/nA) \, x^{n-1}\di x\,. $$
Then, by using the Remark \ref{rem:Koszul-DeRham} we obtain that
$$
\aligned 
H^{-i}\big(\SKos_A(\Pi A)\big) &= \Pi^{i}  A_{i-\text{\rm tor}} \oplus \Pi^{i+1}\,  A/(i+1)A\\
&=A_{i-{\rm\text{tor}}}(\di\theta)^i\oplus (A/(i+1)A)\, \theta(\di\theta)^i
\endaligned
$$
\end{enumerate}
\end{proof}
\begin{ex} $H^0(\SKos_\bbZ\Pi \bbZ)= \bbZ$ and $H^{-i}(\SKos_\bbZ \Pi \bbZ)= \Pi^{i+1}\, \bbZ/(i+1)\bbZ$ for $i>0$.
\end{ex}

\subsection{Universality of Koszul complex}
Before proceeding with further general facts to be obtained from the super Koszul complex, let us recall the classical definition of Koszul complex and interpret the complex defined in Definition \ref{def:KoszulDeRham} as a universal object.

Let $M$ be an $A$-module and let $\omega\colon M\to A$ a morphism. The Koszul complex associated to $\omega$ is the complex 
\begin{equation}\label{eq:KosClassical}
\Kos(M,\omega)\equiv \xymatrix@C=17pt{
\cdots \ar[r] & \bigwedge^{p}_{A} M\ar[r]^(.5){d_\omega}&\bigwedge_A^{p-1}M\ar[r]^(.6){d_\omega}&\cdots\ar[r]^(.3){d_\omega}&\bigwedge_{A}^2 M\ar[r]^(.55){d_\omega}&M \ar[r]^\omega &A\ar[r]& 0
}
\end{equation}
where the differential $d_\omega\colon  \bigwedge^{\punto}_{A} M\to  \bigwedge^{\punto}_{A} M$ is the unique anti-derivation of degree $-1$ such that $d_\omega(m)=\omega(m)$ for all $m\in M$. Explicitly: 
$$d_\omega(m_1\wedge\cdots\wedge m_p)=\displaystyle\sum_{i=1}^{p}(-1)^{i+1}\omega(m_i)\,m_1\wedge\cdots\wedge\widehat{m_i}\wedge\cdots\wedge m_p\,.$$ 
As a particular case, if $M\simeq A^r$ is a free module and $\omega\colon A^r\to A$ is given by a sequence $(a_1,\dots,a_r)$, then the Koszul complex associated to $\omega$ is called the Koszul complex of the sequence $\textit{\textbf{a}}=(a_1,\dots,a_r)$ and is denoted by $\Kos(\textit{\textbf{a}})$. Moreover, if $\textit{\textbf{a}}$ is a regular sequence, then $\Kos(\textit{\textbf{a}})$ is a resolution of $A/(a_1,\dots,a_r)$. Therefore the Koszul complex introduced in the Definition \ref{def:KoszulDeRham} is, in this notation, the Koszul complex $\Kos(\Omega_{B/A},i_D)$. If $M\simeq A^r$ is a free $A$-module, then $S^\punto_A M\simeq A[x_1,\dots, x_r]$ and $\Kos(\Omega_{B/A},i_D)$ is the Koszul complex of the sequence $(x_1,\dots,x_r)$.

Consider now the Koszul complex $\Kos(M\otimes_A B, \theta)$ associated to the natural morphism $\theta\colon M\otimes_A S^\punto M\to S^\punto M$, that is, denoting $B=S^\punto_A M$
\begin{equation}\label{eq:KosClassicalB}
\Kos(M\otimes_A B, \theta)\equiv \xymatrix@C=17pt{
\cdots \ar[r] &\bigwedge_B^{p}(M\otimes_A B)\ar[r]^(.7){d_\theta}&\cdots\ar[r]^(.3){d_\theta}&\bigwedge_{B}^2 (M\otimes_A B)\ar[r]^(.55){d_\theta}&M\otimes_A B \ar[r]^(.65)\theta &B\ar[r]& 0
}
\end{equation}
This Koszul complex is universal in the sense that any morphism of modules $\omega\colon M\to A$ yields a morphism $s_\omega\colon B\to A$ of rings in such a way that 
$$\Kos(M\otimes_A B,\theta)\otimes_B A\simeq \Kos(M,\omega)\,.$$
Thus, taking into account the Theorem \ref{thm:relativediff} one finds that 
$$\Kos_A (M)\otimes_B A\simeq \Kos(M,\omega)\,. $$
Note that, even though $\Kos_A(M)$ is acyclic, the Koszul complex $\Kos(M,\omega)$ is either acyclic or not depending on how the base change $B\to A$ is. From a geometric perspective this can be understood as follows: let $\sigma_0\colon\SSpec A\hookrightarrow\mathbb{V}(M)$ be the zero section of $\pi\colon\mathbb{V}(M)\to \SSpec A$ induced by $s_0\colon S^\bullet_A M\to A$ and let $\sigma_\omega\colon \SSpec A\to \mathbb{V}(M)$ be the section of $\pi$ induced by $s_\omega\colon S^\punto_A M\to A$. For clearness let us put $A_0$ (resp. $A_\omega$) to refer to $A$ with the $B$-module structure given by $s_0$ (resp. $s_\omega$). Then 
$$\Tor^B_i(A_0,A_\omega)=H^{-i}(\Kos_A (M)\otimes_B A)=H^{-i}(\Kos(M,\omega))$$
so that the acyclicity of $\Kos(M,\omega)$ depends on how $\sigma_0$ and $\sigma_\omega$ intersect each other.

As applications of the super Koszul complex we shall address in the next sections the notions of Berezinian of a free supermodule and the Berezin function of an automorphism of a free supermodule as well as the problem of extending the so-called Bott formula to the projective superspace.

\section{The Berezinians}\label{sec:Berezinians}
The Berezinian (viewed as a complex or as a module) plays the role of the dualizing complex (respectively dualizing sheaf) of an ordinary space, which makes no sense in the super setting since $\bigwedge^i\Omega_X$ does not need vanish from certain $i$. In this section we revisit the notions of the Berezinian of a free supermodule and the Berezin function by using the super Koszul complex and the relative dualizing sheaf of a closed immersion.  It is worth noting, though, that it is not the first time this issue is discussed (for previous approaches we refer to \cite{OP84} and \cite{NR22}). Even so, let us stress that here this problem is addressed in an intrinsic way (we avoid to use coordinates as much as possible) and in a fairly general point of view (we do not need assumptions neither on smoothness nor on the characteristic of the base superring).

\begin{definition}
Let $M$ be an $A$-supermodule. The dual module $M^*$ of $M$ is defined as 
$$M^*={\HHom}_A(M,A)\,.$$
which has
a natural structure of $A$-supermodule (by composition) taking into account that
$A_0=\Hom_A(A,A)=\Hom_A(\Pi A,\Pi A)$ and $A_1=\Hom_A(A,\Pi A)=\Hom_A(\Pi A,A)$.
\end{definition}
Notice that $A^*=A$. The universal property of the dual module is given by the formula
$$\HHom_A(N,M^*)=\HHom_A(N\otimes_A M, A)\,.$$
Any morphism of $A$-supermodules $M\xrightarrow{f}{N}$ induces a morphism of $A$-supermodules $N^*\xrightarrow{f^*}{M^*}$ which is called the {\em transpose} of $f$.
\begin{prop}
Some elementary properties of the dual are:  \begin{enumerate}
\item $(M\oplus N)^*=M^*\oplus N^*$.
\item $(\Pi M)^*=\Pi(M^*)$.
\item If $L$ is a free supermodule of rank $(p\vert q)$, so is $L^*$.
\item One has  a natural morphism $M^*\otimes_AN^*\to (M\otimes_AN)^*$ which is an isomorphism if $M$ and $N$ are free supermodules of finite rank.
\item One has natural morphisms
\[ S^n_A(M^*)\to (S^n_AM)^*,   \qquad {\bigwedge}^n_A(M^*)\to ({\bigwedge}^n_AM)^* \] which are isomorphisms if $M$ is free and finite.
\item For any $A$-supermodule $M$ and any superring homomorphism $A\to B$, there is a natural morphism
\[{\HHom}_A(M,A)\otimes_AB\to \HHom_B(M\otimes_AB,B)\] which is an isomorphism if $M$ is free and finite.
\end{enumerate}
\end{prop}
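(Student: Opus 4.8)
The plan is to establish (1) and (2) directly and then bootstrap everything else from the free rank-one case. For (1), additivity of $\HHom_A(-,A)$ in the first variable provides a canonical parity-graded isomorphism $\HHom_A(M\oplus N,A)\simeq \HHom_A(M,A)\oplus\HHom_A(N,A)$; one only needs to check that it respects the $A$-action and the $\bbZ_2$-grading, which is immediate from the definitions. For (2), a morphism $\Pi M\to A$ of a given parity is, by the very definition of the parity shift, the same datum as a morphism $M\to A$ of the opposite parity; this is exactly the content of the recorded identities $A_1=\Hom_A(A,\Pi A)=\Hom_A(\Pi A,A)$ applied componentwise. Hence $((\Pi M)^*)_0=(M^*)_1$ and $((\Pi M)^*)_1=(M^*)_0$, i.e. $(\Pi M)^*\simeq\Pi(M^*)$. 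Property (3) is then purely formal: writing $L\simeq A^p\oplus\Pi A^q$ and combining (1), (2) with $A^*=A$ yields $L^*\simeq A^p\oplus\Pi A^q$, again free of rank $(p\vert q)$.

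For (4), (5) and (6) I would first exhibit the natural morphisms and only afterwards prove them bijective under the finiteness hypothesis. The morphism in (4) sends $\phi\otimes\psi$ to the functional $m\otimes n\mapsto (-1)^{|\psi|\,|m|}\phi(m)\psi(n)$; in (6), $\phi\otimes b$ is sent to $m\otimes b'\mapsto (-1)^{|b|\,|m|}\phi(m)\,bb'$; and in (5) the maps are induced by the corresponding super pairings $\bigwedge^n_A M^*\times \bigwedge^n_A M\to A$ and $S^n_A M^*\times S^n_A M\to A$. The first task is to verify that each of these is well defined, that is, that it factors through the defining relations of the tensor product, of the base change, and of the (anti)symmetrisation, and that it is $A$-linear and parity preserving. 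It is precisely here that the Quillen--Koszul sign rule fixes the exponents written above, and this is the only genuinely \emph{super} ingredient in the construction.

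To prove that these natural morphisms are isomorphisms when $M$ (and $N$) are free of finite rank, I would argue by additivity together with a rank-one computation. Using Proposition \ref{prop:SEproperties}, together with (1), (2) and the additivity of $\otimes_A$, each side of (4), (5) and (6) decomposes as a finite direct sum (respectively, for the symmetric and exterior algebras, a finite direct sum of tensor products of graded pieces) of contributions coming from the free generators $A$ and $\Pi A$. Since the morphisms are compatible with these decompositions -- they are multiplicative for the symmetric and exterior products and monoidal for $\otimes_A$ -- it suffices to check bijectivity on the generating supermodules $A$ and $\Pi A$. These base cases are immediate: $A^*=A$, $(\Pi A)^*=\Pi A$, $S^n_A A=A$ while $\bigwedge^n_A A=0$ for $n\geq 2$, and the odd generator is treated through Equation \ref{eq:simetricotwisted}, which identifies $S^n_A(\Pi A)$ with $\Pi^n\bigwedge^n_A A$ and $\bigwedge^n_A(\Pi A)$ with $\Pi^n S^n_A A$.

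I expect the main obstacle to be the bookkeeping of Koszul signs rather than any conceptual difficulty. One must ensure that the pairings in (4) and (5) are genuinely well defined modulo the super(anti)symmetry relations, and -- more delicately -- that each natural morphism commutes with the additivity isomorphisms of Proposition \ref{prop:SEproperties}, so that the reduction to the rank-one generators is legitimate. For (5) in particular, the truncation phenomena peculiar to the super setting (the symmetric power of an odd line being exterior-like, and conversely) must be reconciled with the duality pairing; this is exactly the point where Equation \ref{eq:simetricotwisted} does the work, ensuring that the rank-one check on $\Pi A$ goes through for all $n$ despite the unboundedness of the exterior algebra of an odd generator.
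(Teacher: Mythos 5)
The paper states this proposition without any proof at all, so there is nothing of the authors' to compare your argument against; I can only judge the plan on its own merits. Items (1)--(3) are fine, and for (4) and (6) your strategy --- write down the sign-twisted evaluation maps, check that they factor through the defining relations, and reduce the isomorphism claim to the rank-one generators $A$ and $\Pi A$ by additivity --- is the standard argument and works in any characteristic: in the monomial bases the Gram matrix of the evaluation pairing is, up to Koszul signs, the identity, so those rank-one checks really are immediate.

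The genuine gap is in item (5), exactly at the base cases you declare ``immediate''. The only natural pairing $S^n_A(M^*)\otimes_A S^n_AM\to A$ is the symmetrised (permanent-type) one, because the unsymmetrised evaluation on $T^n$ does not factor through the symmetric quotient in each variable separately. On the even generator $M=A$ this sends the pair $\big((e^*)^n,\,e^n\big)$ to $n!$, so the natural map $S^n_A(A^*)\to (S^n_AA)^*$ is multiplication by $n!$ and is \emph{not} an isomorphism unless $n!$ is invertible in $A$; the honest dual of $S^n_AM$ in arbitrary characteristic is the divided-power module $\Gamma^n_A(M^*)$, not $S^n_A(M^*)$. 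The same factorial reappears, via Equation \ref{eq:simetricotwisted}, in the exterior half on the odd generator: $\langle(\theta^*)^{\wedge n},\theta^{\wedge n}\rangle$ is a sum over all $n!$ permutations, each contributing $+1$ because the sign of the permutation cancels against the Koszul sign of transposing odd elements. So item (5) as stated already fails for $A=\bbZ/2\bbZ$, $M=A$ or $M=\Pi A$, $n=2$, and no amount of bookkeeping in the reduction to rank one can repair it: you should either add the hypothesis that $A$ is a $\bbQ$-algebra (or that $n!$ is invertible), or replace the claim by the weaker statement --- which is all the paper ever uses, e.g.\ in Section \ref{sec:Berezinians} --- that both sides are free supermodules of the same rank. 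More generally the obstruction is concentrated on basis monomials with repeated indices, i.e.\ on $S^n$ of modules with nonzero even part and on $\bigwedge^n$ of modules with nonzero odd part; items (1)--(4) and (6) are unaffected.
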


\begin{definition}\label{def:Berezinian cplx}
Let $M$ be an $A$-supermodule and $B=S_A^\bullet M$. The Berezinian complex of $M$ is defined to be 
$$
\BBer^\bullet_A(M)=\HHom_B(\SKos_A (M),B)=\HHom^\bullet_B(\SKos_A (M),B)\,.
$$
\end{definition}

By definition, $\BBer^\bullet_A(M)$ is a complex of graded $B$-supermodules whose $p$-th component is $$\BBer^p_A(M)=\HHom_B(\Omega^p_{B/A},B)=\HHom_B({\exto}^p_AM\otimes_A B[-p],B)\,.$$
\begin{ex}
If $L$ is a free $A$-supermodule of finite rank, then $\hSKos{-p}_A L={\bigwedge}_A^pL\otimes_A B[-p]$ and $$\HHom_B({\bigwedge}^p_AL\otimes_A B[-p],B)={\bigwedge}_A^pL^*\otimes_A B[p]\,.$$ So that
$$\BBer^\bullet_A(L)\equiv  \xymatrix@C=17pt{
0\ar[r]&B\ar[r]&L^*\otimes_A B[1]\ar[r]&\bigwedge_{A}^2 L^*\otimes_AB[2]\ar[r]&\cdots}
$$
and its homogeneous component of degree n is
$$[\BBer^\bullet_A(L)]_n\equiv  \xymatrix@C=17pt{
0\ar[r]&S^n_AL\ar[r]&L^*\otimes_A S^{n+1}_AL\ar[r]&\bigwedge_{A}^2 L^*\otimes_A S^{n+2}_AL\ar[r]&\cdots}
$$
\end{ex}

As straightforward consequences of  Proposition \ref{prop:KosProperties} we have
\begin{prop}\label{prop:Dual}
\hspace{2em}
\begin{enumerate}
\item Base change: Let $L$ be a free $A$-supermodule of finite rank. Let $A\to A'$ be a morphism of superrings and put $L'=L\otimes_A A'$ and $B'=S^\punto_AL'$. Then 
$$\BBer^\bullet_{A'}(L')\simeq \BBer^\bullet_A(L)\otimes_B B'\simeq \BBer^\bullet_{A}(L)\otimes_A A'\,.$$
\item Additivity: Let $L_1$ and $L_2$ be two free $A$-supermodules of finite rank. Then $$\BBer^\bullet_A(L_1\oplus L_2)\simeq\BBer^\bullet_A(L_1)\otimes_A\BBer^\bullet_A(L_2)$$ (as complexes of $B_1\otimes_A B_2$-supermodules, with $B_i=S^\punto_A L_i$).
\end{enumerate}
\end{prop}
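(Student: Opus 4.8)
The plan is to obtain both statements by dualizing the corresponding properties of the super Koszul complex recorded in Proposition \ref{prop:KosProperties}, exploiting that each term of $\SKos_A(L)$ is a \emph{free} $B$-supermodule of finite rank (namely $\exto^p_A L\otimes_A B[-p]$, by Theorem \ref{thm:relativediff}), so that the formation of $\HHom_B(-,B)$ commutes with the base change and the external tensor product that appear there.

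For the base change part, I would start from the isomorphism $\SKos_{A'}(L')\simeq \SKos_A(L)\otimes_A A'$ of Proposition \ref{prop:KosProperties}(1). Writing $B'=S^\punto_{A'}(L')\simeq B\otimes_A A'$ (Proposition \ref{prop:SEproperties}(1)), this identifies $\SKos_A(L)\otimes_A A'$ with $\SKos_A(L)\otimes_B B'$. Applying $\HHom_{B'}(-,B')$ and using the term-wise freeness together with property (6) of the dual module established above --- the natural morphism $\HHom_B(N,B)\otimes_B B'\to \HHom_{B'}(N\otimes_B B',B')$ is an isomorphism for $N$ free and finite --- degree by degree yields
$$\BBer^\bullet_{A'}(L')\simeq \HHom_{B'}(\SKos_A(L)\otimes_B B',B')\simeq \HHom_B(\SKos_A(L),B)\otimes_B B'=\BBer^\bullet_A(L)\otimes_B B'.$$
The remaining isomorphism $\BBer^\bullet_A(L)\otimes_B B'\simeq \BBer^\bullet_A(L)\otimes_A A'$ is then immediate from $B'\simeq B\otimes_A A'$.

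For additivity I would dualize the isomorphism $\SKos_A(L_1\oplus L_2)\simeq \SKos_A(L_1)\otimes_A \SKos_A(L_2)$ of Proposition \ref{prop:KosProperties}(2), which is an isomorphism of complexes of $B=B_1\otimes_A B_2$-supermodules (Proposition \ref{prop:SEproperties}(2)). Applying $\HHom_B(-,B)$, it then remains to establish the external-product formula
$$\HHom_{B_1\otimes_A B_2}(K_1\otimes_A K_2,\, B_1\otimes_A B_2)\simeq \HHom_{B_1}(K_1,B_1)\otimes_A \HHom_{B_2}(K_2,B_2)$$
for bounded complexes $K_i$ of free finite $B_i$-supermodules. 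This reduces, term by term, to the case where each $K_i$ is a single free finite module, where it is exactly properties (4) and (6) of the dual module (first extend scalars $B_i\to B$, then use the isomorphism $M^*\otimes_A N^*\iso (M\otimes_A N)^*$). Reassembling gives $\BBer^\bullet_A(L_1\oplus L_2)\simeq \BBer^\bullet_A(L_1)\otimes_A \BBer^\bullet_A(L_2)$ as complexes of $B$-supermodules.

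The routine verifications are standard, so the only genuinely delicate point is the sign bookkeeping: in the supercommutative setting the identifications carry Koszul signs coming both from the $\bbZ_2$-parity and from the cohomological degree (the shifts $[-p]$ and the transpose of $i_D$), and one must check that the term-wise dualization isomorphisms are compatible with the differentials --- that is, that the transpose of $i_D$ on $\SKos_A(L_1\oplus L_2)$ matches the differential of the tensor product complex $\BBer^\bullet_A(L_1)\otimes_A \BBer^\bullet_A(L_2)$ under the above identification. Once term-wise freeness is in hand this is a formal, if sign-sensitive, check and requires no choice of coordinates.
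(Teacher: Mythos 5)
Your proposal is correct and follows exactly the route the paper intends: the paper offers no written proof, simply asserting the proposition as a ``straightforward consequence'' of Proposition \ref{prop:KosProperties}, and your argument --- dualizing the base-change and additivity isomorphisms of the super Koszul complex and using term-wise freeness of $\exto^p_A L\otimes_A B[-p]$ so that $\HHom_B(-,B)$ commutes with extension of scalars and external tensor products via the listed properties of the dual module --- is precisely the justification being invoked. The one point you flag as delicate (sign compatibility of the transposed differentials) is indeed the only content beyond formal bookkeeping, and your treatment of it is adequate.
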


The next step is to show that the Berezinian of a free $A$-supermodule is a concentrated complex, that is, $\BBer^\bullet_A(L)$ is just a graded supermodule, up to shift. 

\begin{thm}\label{thm:CohomBerFree}
Let $L\simeq A^p\oplus\Pi A^q$ be a free $A$-supermodule of rank $(p|q)$. Then 
\begin{enumerate}
\item $$H^i\big(\BBer^\bullet_A(L)\big)= {\bigwedge}^p_A (A^p)^* \otimes_A H^{i-p}\big(\BBer^\bullet_A(\Pi A^q)\big)$$
and 
$$H^p \big(\BBer^\bullet_A(L)\big) = {\bigwedge}^p_A (A^p)^* \otimes_A S^q_A(\Pi A^q) =  {\bigwedge}^p_A (A^p)^* \otimes_A \Pi^q{\bigwedge}^q_A (A^q)=\begin{cases} A& \text{ if }q \text{ is even}\\ \\    \Pi A &\text{ if }q \text{ is odd}\end{cases}
$$
\item If $A$ is a $\bbQ$-algebra, then
$ H^i\big(\BBer^\bullet_A(L)\big) = 0$ for any $i\neq p$.
\end{enumerate}
\end{thm}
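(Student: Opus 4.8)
The plan is to reduce everything to the one–variable building blocks via additivity of the Berezinian (Proposition \ref{prop:Dual}(2)), which gives $\BBer^\bullet_A(L)\simeq \BBer^\bullet_A(A^p)\otimes_A\BBer^\bullet_A(\Pi A^q)$, and then to analyse the even and odd tensor factors separately.

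First I would treat the even factor. Here $\SKos_A(A^p)$ is the ordinary Koszul complex of the regular sequence $(x_1,\dots,x_p)$ in $A[x_1,\dots,x_p]$, so $\BBer^\bullet_A(A^p)=\HHom_B(\SKos_A(A^p),B)$ is a bounded complex of free $A$–modules. Using additivity once more, $\BBer^\bullet_A(A^p)\simeq\BBer^\bullet_A(A)^{\otimes p}$, and the single factor $\BBer^\bullet_A(A)$ is the two–term complex $A[x]\xrightarrow{\cdot x}A[x]\,e_1$ obtained as the $B$–dual of the complex \eqref{eq:KosAp0}; since $x$ is a non–zero–divisor its cohomology is concentrated in degree $1$ and equals $(A^1)^*$. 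A K\"unneth argument for bounded complexes of flat modules then shows $\BBer^\bullet_A(A^p)$ has cohomology concentrated in degree $p$, equal to the free rank–one module $\bigwedge^p_A(A^p)^*$. Because this complex is a bounded complex of flats quasi–isomorphic to $\bigwedge^p_A(A^p)^*[-p]$, tensoring it against $\BBer^\bullet_A(\Pi A^q)$ merely shifts and tensors cohomology, yielding the first displayed formula $H^i\big(\BBer^\bullet_A(L)\big)\simeq\bigwedge^p_A(A^p)^*\otimes_A H^{i-p}\big(\BBer^\bullet_A(\Pi A^q)\big)$ for every $i$.

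Next I would compute $H^0\big(\BBer^\bullet_A(\Pi A^q)\big)$ directly, with no hypothesis on $A$. By definition this is the kernel of the transpose of $i_D\colon\Omega_{B'/A}\to B'$, where $B'=S^\punto_A(\Pi A^q)=\bigwedge^\punto_A(A^q)$, namely the set of $b\in B'$ annihilating the image of $i_D$. Since that image is the augmentation ideal $(\theta_1,\dots,\theta_q)$, the kernel is exactly the annihilator of $(\theta_1,\dots,\theta_q)$ in the Grassmann algebra, which a short bookkeeping of basis monomials identifies with the top power $A\,\theta_1\cdots\theta_q=\bigwedge^q_A(A^q)$; by \eqref{eq:simetricotwisted} this is $S^q_A(\Pi A^q)=\Pi^q\bigwedge^q_A(A^q)$. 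Substituting into the formula above gives $H^p\big(\BBer^\bullet_A(L)\big)=\bigwedge^p_A(A^p)^*\otimes_A S^q_A(\Pi A^q)$, which is $A$ or $\Pi A$ according to the parity of $q$, completing part (1).

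Finally, for part (2) I would show that $H^j\big(\BBer^\bullet_A(\Pi A^q)\big)=0$ for $j\neq 0$ when $A$ is a $\bbQ$–algebra; by the first formula this forces $H^i\big(\BBer^\bullet_A(L)\big)=0$ for $i\neq p$. Additivity reduces this to the single odd variable through $\BBer^\bullet_A(\Pi A^q)\simeq\BBer^\bullet_A(\Pi A)^{\otimes q}$. The essential difficulty, and the main obstacle, is that $\BBer^\bullet_A(\Pi A)$ is \emph{unbounded above} and, by Theorem \ref{thm:Koszulfreemodule}(3), genuinely fails to be acyclic off degree $0$ in positive characteristic, precisely because the differentials of $\BBer^\bullet_A(\Pi A)$ carry the integer factors $p+1$ coming from the homothety field. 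I would circumvent the unboundedness by splitting into the finite $\bbZ$–graded pieces: each homogeneous component $[\BBer^\bullet_A(\Pi A)]_a$ is a bounded complex of free $A$–modules, and invertibility of the integers in a $\bbQ$–algebra makes it acyclic for every $a\neq 1$, whereas $[\BBer^\bullet_A(\Pi A)]_1=\Pi A$ sits in cohomological degree $0$. A K\"unneth argument for bounded complexes of flats then leaves, in each graded piece of $\BBer^\bullet_A(\Pi A^q)$, only the summand with all indices equal to $1$; this summand lives in cohomological degree $0$ and equals $(\Pi A)^{\otimes q}=S^q_A(\Pi A^q)$, all other graded summands being acyclic. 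This yields the desired vanishing and is consistent with the $H^0$ computed above.
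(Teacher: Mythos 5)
Your proof is correct and follows essentially the same route as the paper: additivity of $\BBer^\bullet$ reduces everything to the rank-one blocks $\BBer^\bullet_A(A)$ and $\BBer^\bullet_A(\Pi A)$, whose cohomology is computed explicitly, and a graded K\"unneth argument recombines them. The only (harmless) variations are that you compute $H^0\big(\BBer^\bullet_A(\Pi A^q)\big)$ directly as the annihilator of the augmentation ideal of the Grassmann algebra rather than via additivity from the $q=1$ case, and that you spell out the flatness and boundedness arguments (in particular the degreewise splitting needed to handle the unboundedness of $\BBer^\bullet_A(\Pi A)$) that the paper leaves implicit.
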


\begin{proof}
\begin{enumerate}
\item By Proposition \ref{prop:Dual}, it sufficies to show that 
$$H^i(\BBer^\bullet_A(A))=\left\{ \aligned 0,\text{ if }i\neq 1\\ \\ A,\text{ if }i=1\endaligned\right.  \quad\text{and}\quad H^0(\BBer^\bullet_A(\Pi A)) =\Pi A\,.$$
In the first case, $B=A[x]$, with $x$ even, and the Koszul complex is
$$0\to A[x]\di x\to A[x]\to 0, \quad \di x\mapsto x$$ hence $\BBer^\bullet_A(A)$ is the complex
$$0\to A[x]\to A[x]\frac\partial{\partial x}\to 0, \quad 1\mapsto x\, \frac\partial{\partial x}$$ and the result follows.

In the second case, $B=A[\theta]$, with $\theta$ odd, and the Koszul complex is
$$\cdots \to B\overset{ 2\theta\cdot}\to B\overset{\theta\cdot}\to B\to 0$$ hence $\BBer^\bullet_A(\Pi A)$ is the complex
$$0\to B\overset{\theta\cdot}\to B\overset { 2\theta\cdot}\to B \to\cdots$$ and then $H^0(\BBer^\bullet_A(\Pi A))=\Ker[ B\overset{\theta\cdot}\to B]= A\theta$.

\item If $A$ is a $\bbQ$-algebra, then $H^i(\BBer^\bullet_A(\Pi A)=0$ for $i>0$, and we conclude by additivity.
\end{enumerate}
\end{proof}

\begin{definition}\label{def:Berezinian mod}
The Berezinian module of a free $A$-supermodule $L$ of rank $p|q$ is  
$$
\Ber_A(L):=H^p(\BBer^\bullet_A(L))={\bigwedge}^p_A (A^p)^* \otimes_A \Pi^q{\bigwedge}^q_A (A^q)
$$
which is a free $A$-supermodule of rank $(1|0)$ if $q$ is even and of rank $(0|1)$ if $q$ is odd. 
\end{definition}

From Proposition \ref{prop:Dual} one gets
\begin{prop}
Let $L$ and $L'$ be free $A$-supermodules of finite rank, and $A\to A'$ a morphism of superrings. Then: 
\begin{enumerate}
\item $\Ber_{A}(L)\otimes_A A'=\Ber_{A'}(L\otimes_A A')$.
\item $\Ber_{A}(L\oplus L')=\Ber_{A}(L)\otimes_A\Ber_{A}(L')$.
\end{enumerate}
\end{prop}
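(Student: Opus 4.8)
The plan is to deduce both statements from the explicit description of the Berezinian module recorded in Definition \ref{def:Berezinian mod}: for a free supermodule $L\simeq A^p\oplus\Pi A^q$ of rank $(p|q)$ one has
$$\Ber_A(L)={\bigwedge}^p_A(A^p)^*\otimes_A\Pi^q{\bigwedge}^q_A(A^q),$$
combined with the base-change and additivity formulas for exterior powers (Proposition \ref{prop:SEproperties}) and for duals. Since each factor ${\bigwedge}^p_A(A^p)^*$ and ${\bigwedge}^q_A(A^q)$ is a free $A$-module of rank one, $\Ber_A(L)$ is an invertible free supermodule, and the whole argument reduces to manipulating tensor products of such rank-one pieces.

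For part (1) I would first note that $L':=L\otimes_A A'\simeq (A')^p\oplus\Pi(A')^q$ is free of rank $(p|q)$ over $A'$, whence $\Ber_{A'}(L')={\bigwedge}^p_{A'}((A')^p)^*\otimes_{A'}\Pi^q{\bigwedge}^q_{A'}((A')^q)$. The base-change isomorphism for duals of free finite supermodules gives $((A')^p)^*\simeq (A^p)^*\otimes_A A'$, and base change for exterior powers (Proposition \ref{prop:SEproperties}(1)) gives ${\bigwedge}^p_{A'}\big((A^p)^*\otimes_A A'\big)\simeq {\bigwedge}^p_A(A^p)^*\otimes_A A'$ and likewise ${\bigwedge}^q_{A'}((A')^q)\simeq {\bigwedge}^q_A(A^q)\otimes_A A'$. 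As the parity shift $\Pi^q$ commutes with $-\otimes_A A'$, assembling these isomorphisms yields $\Ber_{A'}(L')\simeq\Ber_A(L)\otimes_A A'$.

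For part (2), with $L$ of rank $(p|q)$ and $L'$ of rank $(p'|q')$, the supermodule $L\oplus L'$ has rank $(p+p'|q+q')$, so $\Ber_A(L\oplus L')={\bigwedge}^{p+p'}_A(A^{p+p'})^*\otimes_A\Pi^{q+q'}{\bigwedge}^{q+q'}_A(A^{q+q'})$. Writing $A^{p+p'}=A^p\oplus A^{p'}$ and $A^{q+q'}=A^q\oplus A^{q'}$ and using additivity of the exterior algebra (Proposition \ref{prop:SEproperties}(2)), only the top-degree summands survive, giving ${\bigwedge}^{p+p'}_A(A^{p+p'})^*\simeq {\bigwedge}^p_A(A^p)^*\otimes_A{\bigwedge}^{p'}_A(A^{p'})^*$ and ${\bigwedge}^{q+q'}_A(A^{q+q'})\simeq {\bigwedge}^q_A(A^q)\otimes_A{\bigwedge}^{q'}_A(A^{q'})$. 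Combining with the identification $\Pi^{q}(\,\cdot\,)\otimes_A\Pi^{q'}(\,\cdot\,)\simeq\Pi^{q+q'}(\,\cdot\otimes_A\cdot\,)$ and regrouping the four rank-one tensor factors produces $\Ber_A(L)\otimes_A\Ber_A(L')$.

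The only genuinely super-specific point, and hence the step I would treat with care, is the regrouping in part (2): commuting the parity-shift functors $\Pi^q$ past the factor ${\bigwedge}^{p'}_A(A^{p'})^*$ and reassembling the four pieces must respect the Koszul sign rule, so that the canonical comparison is an honest isomorphism of $A$-supermodules and not one carrying a spurious sign; everything else is formal. Alternatively, one may argue directly from Proposition \ref{prop:Dual}: for part (1) take $H^p$ of $\BBer^\bullet_{A'}(L')\simeq\BBer^\bullet_A(L)\otimes_A A'$, and for part (2) apply a Künneth argument to $\BBer^\bullet_A(L\oplus L')\simeq\BBer^\bullet_A(L)\otimes_A\BBer^\bullet_A(L')$, where the potential $\Tor$ contributions to the bottom cohomology $H^{p+p'}$ vanish precisely because the Berezinian modules $H^p$ and $H^{p'}$ are free, recovering the stated product even outside the $\bbQ$-algebra case.
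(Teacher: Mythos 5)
Your argument is correct. The paper itself disposes of this proposition in one line, ``From Proposition \ref{prop:Dual} one gets...'', i.e.\ it reads the two identities off the complex-level base-change and additivity isomorphisms for $\BBer^\bullet_A(-)$ together with the cohomology computation of Theorem \ref{thm:CohomBerFree}; that is precisely the alternative route you sketch in your last paragraph, and your remark about why the K\"unneth/Tor corrections to $H^{p+p'}$ vanish (the modules $H^p$ and $H^{p'}$ are free, and $H^i=0$ for $i$ below the top exterior degree) is exactly the point that makes that one-liner legitimate. Your primary route instead works directly with the explicit rank-one description $\Ber_A(L)={\bigwedge}^p_A(A^p)^*\otimes_A\Pi^q{\bigwedge}^q_A(A^q)$ recorded in Definition \ref{def:Berezinian mod}, using base change and additivity of exterior powers and duals. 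This is a mild but genuine variant: it buys you something in part (1), where passing through the complex would in principle require knowing that $H^p$ commutes with $-\otimes_AA'$ despite the torsion that can appear in $H^{p+1}(\BBer^\bullet_A(L))$ when $A$ is not a $\bbQ$-algebra, whereas the explicit description makes both sides visibly the same free rank-one supermodule. Your flagged concern about sign bookkeeping when regrouping the four rank-one factors in part (2) is the right thing to watch, and is harmless here since the even factors ${\bigwedge}^p_A(A^p)^*$, ${\bigwedge}^{p'}_A(A^{p'})^*$ commute with everything without introducing signs on the canonical generators.
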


The Berezinian of a free supermodule can be understood as a dualizing sheaf in the following sense. Let $L$ be a free $A$-supermodule of rank $p\vert q$ and let $\bbL=\SSpec S^\punto_A L\xrightarrow{\pi}{\SSpec A=X}$ be the linear superscheme associated to $L$. The zero section $s \colon X\hookrightarrow \LL$ of $\pi$ is a closed immersion of codimension $p$. Thus, we have the relative dualizing complex  $D_{X/\LL}$ and the relative dualizing sheaf $ \omega_{X/\LL}= H^p(D_{X/\LL})=\HExt^p_{B}(A,B)$, being $B=S^\punto_A L$.

\begin{thm}\label{berziniano=dualizante} One has
\[\aligned H^i(D_{X/\LL})&=0, \text{ for }i\neq p \\ \omega_{X/\LL}&=\Ber_A(L).\endaligned\]
\end{thm}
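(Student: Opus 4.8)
The plan is to identify the relative dualizing complex $D_{X/\LL}$ with the Berezinian complex $\BBer^\bullet_A(L)$ (up to a shift and a free resolution), and then invoke Theorem \ref{thm:CohomBerFree} to read off its cohomology. First I would recall that, by definition, $i_*\omega_{X/\LL}=\HExt^p_B(A,B)$ and more generally $i_*H^i(D_{X/\LL})=\HExt^i_B(A,B)$, where $i=s\colon X\hookrightarrow\LL$ is the zero section and $B=S^\punto_A L$. The key observation is that the super Koszul complex $\SKos_A(L)$ provides an explicit resolution of $A=B/I$ as a $B$-supermodule in the relevant range: indeed, by Theorem \ref{thm:Koszulfreemodule}(1), when $L\simeq A^p$ is purely even the complex $\SKos_A(L)\to A$ is a finite free resolution of $A$ over $B=A[x_1,\dots,x_p]$, this being the classical Koszul resolution of the regular sequence $(x_1,\dots,x_p)$.

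The next step is to compute $\R\HHom_B(A,B)$ using this resolution. Applying $\HHom_B(-,B)$ to $\SKos_A(L)$ is exactly forming the Berezinian complex $\BBer^\bullet_A(L)$ by Definition \ref{def:Berezinian cplx}. Hence $\HExt^i_B(A,B)=H^i(\BBer^\bullet_A(L))$ whenever $\SKos_A(L)$ is a genuine $B$-free resolution of $A$, which holds in the purely even case $q=0$. By Theorem \ref{thm:CohomBerFree}(1) this cohomology vanishes away from degree $p$ and equals $\bigwedge^p_A(A^p)^*=\Ber_A(L)$ in degree $p$, giving the result when $q=0$. The general case $L\simeq A^p\oplus\Pi A^q$ I would handle by transitivity and additivity: writing $\LL=\LL_0\times_X\LL_1$ with $\LL_0=\SSpec S^\punto_A(A^p)$ and $\LL_1=\SSpec S^\punto_A(\Pi A^q)$, and factoring the zero section through the intermediate closed immersion, Proposition \ref{transitivity} yields $D_{X/\LL}\simeq \LL i^*D_{X/\LL_1}\overset\LL\otimes D_{\LL_1/\LL}$ (or the analogous factoring), while Proposition \ref{prop:Dual} gives the matching tensor decomposition $\BBer^\bullet_A(L)\simeq\BBer^\bullet_A(A^p)\otimes_A\BBer^\bullet_A(\Pi A^q)$. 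Comparing the two decompositions reduces everything to the two building-block cases $L=A$ and $L=\Pi A$ computed in Theorem \ref{thm:CohomBerFree}.

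The main obstacle I anticipate is precisely the odd directions: when $q>0$ the super Koszul complex $\SKos_A(\Pi A^q)$ is \emph{not} a finite resolution of $A$ (it is unbounded below and, by Theorem \ref{thm:Koszulfreemodule}(3), not even acyclic over $\bbZ$), so one cannot naively compute $\R\HHom_B(A,B)$ by dualizing it. The honest route is to treat the odd variables separately: over $B_1=A[\theta_1,\dots,\theta_n]$ the structure sheaf $A=B_1/(\theta_1,\dots,\theta_n)$ admits no finite free resolution, and instead one works with the Grassmann-algebra self-duality, using that $\HExt^\bullet_{B_1}(A,B_1)$ is concentrated and equals $\Pi^q\bigwedge^q_A(A^q)$. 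Thus the technical heart is verifying the identification $\HExt^i_B(A,B)\simeq H^i(\BBer^\bullet_A(L))$ including the odd factor; this I would establish by the derived tensor decomposition above together with flat base change (Proposition \ref{flatbasechange}) to reduce to the universal case over $\bbZ$, after which Theorem \ref{thm:CohomBerFree} delivers both the vanishing for $i\neq p$ and the equality $\omega_{X/\LL}=\Ber_A(L)$.
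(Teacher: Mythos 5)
Your overall skeleton --- reduce to the blocks $A^p$ and $\Pi A^q$ via transitivity and flat base change, treat the even block by dualizing the Koszul resolution, and treat the odd block by a direct computation of $\HExt^\bullet_{A[\theta_1,\dots,\theta_q]}(A,-)$ --- is essentially the paper's strategy (the paper runs it as an induction on $p+q$ with base cases $L=A$ and $L=\Pi A$, using Propositions \ref{transitivity} and \ref{flatbasechange} exactly as you propose). The genuine problem is your final step. The identification $\HExt^i_B(A,B)\simeq H^i(\BBer^\bullet_A(L))$ that you call the ``technical heart'' is \emph{false} for $i\neq p$ once $q>0$, unless $A$ is a $\bbQ$-algebra, and Theorem \ref{thm:CohomBerFree} does \emph{not} deliver the vanishing for $i\neq p$: part (1) of that theorem gives $H^i(\BBer^\bullet_A(L))=\bigwedge^p_A(A^p)^*\otimes_A H^{i-p}(\BBer^\bullet_A(\Pi A^q))$, and the higher cohomology of $\BBer^\bullet_A(\Pi A)$ is nonzero in general (for instance $H^1(\BBer^\bullet_A(\Pi A))=\Ker(\cdot\, 2\theta)/\Img(\cdot\,\theta)=A_{2-tor}$, which is nonzero for $A=\bbZ/2\bbZ$); the concentration in part (2) requires $A$ to be a $\bbQ$-algebra, whereas the theorem you are proving carries no such hypothesis. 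Carried out literally, your plan would attempt to prove something that contradicts the unconditional vanishing $H^i(D_{X/\LL})=0$ for $i\neq p$.

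The point you half-see but then lose is that $\R\HHom_B(A,B)$ must be computed from a genuine free resolution of $A$, and for $B=A[\theta]$ that resolution is $\cdots \xrightarrow{\cdot\theta}\Pi^2 B\xrightarrow{\cdot\theta}\Pi B\xrightarrow{\cdot\theta}B\to A\to 0$, with multiplication by $\theta$ at \emph{every} stage --- not the super Koszul complex, whose differentials carry the extra integer factors $\cdot\, k\theta$ and which is therefore not acyclic. Dualizing the former gives $\HHom_B(A,B)=\theta B=\Pi A$ and $\HExt^i_B(A,B)=0$ for $i>0$, unconditionally; this is precisely the computation the paper performs, and it is what yields the concentration of $D_{X/\LL}$. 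The morphism $s_*D_{X/\LL}\to\BBer^\bullet_A(L)$ induced by $\SKos_A(L)\to A$ is an isomorphism only on $H^p$ (and on all cohomology when $A$ is a $\bbQ$-algebra or $q=0$). Your ``Grassmann self-duality'' remark is the correct replacement for the odd block: conclude from it directly, and drop both the identification $\HExt^i_B(A,B)\simeq H^i(\BBer^\bullet_A(L))$ for $i\neq p$ and the appeal to Theorem \ref{thm:CohomBerFree} for the vanishing.
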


\begin{proof} The natural morphism $\SKos_A(L)\to A$ induces a morphism $s_*D_{X/\LL}\to \BBer_A(L)$ and then a morphism $\omega_{X/\LL}\to \Ber_A(L)$. Let us proceed by induction on $p+q$. First, assume $p+q=1$. Then $L=A$ or $L=\Pi A$. 

If $L=A$, then $\SKos_A(L)\to A$ is a quasi-isomorphism, so $s_*D_{X/\LL}\to \BBer_A(L)$    is an isomorphism (in the derived category) and we are done.  

If $L=\Pi A$, then $B=A[\theta]$  and we have an exact sequence
\[ \cdots \to \Pi^nB\overset{\cdot\theta}\to \cdots \to \Pi^2B\overset{\cdot\theta}\to \Pi B \overset{\cdot\theta}\to B\to A\to 0\] and then
\[ \HHom_B(A,B)=\theta B= \Pi A,\quad \HExt^i_B(A,B)=0,\text{ for }i>0.\]

Now assume the theorem holds for $p+q<n$ and let $L$ be a free module with $p+q=n$. Put $L=L_1\oplus L_2$, with $L_i$ under the induction hypothesis. Let us denote $\LL_i=\Spec S^\punto_AL_i$, $\pi_i\colon\LL_i\to X$ the natural morphism and $s_i\colon X\hookrightarrow \LL_i$ the zero section. 

The morphism $L_1\to L $ induces a (flat) morphism $f\colon \LL\to \LL_1$ and the projection $L\to L_2$ induces a closed immersion $j\colon \LL_2\hookrightarrow \LL$;  one has a cartesian diagram
\[ \xymatrix{\LL_2\ar[r]^j\ar[d]_{\pi_2} & \LL\ar[d]^f\\ X\ar[r]^{s_1} &\LL_1}.\]  Then, by flat base change (Proposition \ref{flatbasechange}), $$ D_{\LL_2/\LL}=\pi_2^*D_{X/\LL_1}.$$ 
Now, the zero section $X\hookrightarrow \LL$ is the composition of  $s_2$ with $j$. Notice that $D_{\LL_2/\LL}=\pi_2^*D_{X/\LL_1}$ is perfect by induction then, by transitivity (Proposition \ref{transitivity}),  \[ D_{X/\LL}=\LL s_2^* D_{\LL_2/\LL}\overset\LL\otimes D_{X/\LL_2}=D_{X/\LL_1}\overset\LL \otimes D_{X/\LL_2}\] and we conclude by induction.
\end{proof}

The same argument given in the proof of Theorem \ref{berziniano=dualizante} proves the following
\begin{prop}  [Additivity]\label{additivity-2} If $0\to L_1\to L\to L_2\to 0$ is an exact sequence of finite free $A$-modules, then
\[ D_{X/\LL}=D_{X/\LL_1}\overset\LL\otimes_A D_{X/\LL_2} \] and then
\[ \omega_{X/\LL}=\omega_{X/\LL_1}\otimes_A \omega_{X/\LL_1}\qquad \text{ and }\qquad \Ber_A(L)=\Ber_A(L_1)\otimes_A\Ber_A(L_2).\]
\end{prop}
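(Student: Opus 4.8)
The plan is to reproduce, almost verbatim, the argument from the proof of Theorem \ref{berziniano=dualizante}, checking only that the geometric data it relies on survive when the direct sum there is replaced by a short exact sequence. Write $\LL_i=\SSpec S^\punto_A L_i$, with $\pi_i\colon\LL_i\to X$ the structure map and $s_i\colon X\hookrightarrow\LL_i$ the zero section. The inclusion $L_1\hookrightarrow L$ induces an algebra map $S^\punto_A L_1\to S^\punto_A L$, hence a morphism $f\colon\LL\to\LL_1$, while the surjection $L\to L_2$ induces a surjection $S^\punto_A L\to S^\punto_A L_2$, hence a closed immersion $j\colon\LL_2\hookrightarrow\LL$. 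Exactly as before, the zero section $s\colon X\hookrightarrow\LL$ factors as $s=j\circ s_2$.

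First I would verify the two facts that made the theorem's argument work. To see that
\[ \xymatrix{\LL_2\ar[r]^j\ar[d]_{\pi_2} & \LL\ar[d]^f\\ X\ar[r]^{s_1} &\LL_1} \]
is cartesian I compute $S^\punto_A L\otimes_{S^\punto_A L_1}A$: since $s_1$ kills $L_1$, this is $S^\punto_A L/(L_1\cdot S^\punto_A L)$, which by the standard right-exactness of the symmetric algebra equals $S^\punto_A(L/L_1)=S^\punto_A L_2$, i.e. precisely $\pi_2$. For flatness of $f$ I would use that the given exact sequence of finite free $A$-modules splits (as $L_2$ is free, hence projective), so that by additivity (Proposition \ref{prop:SEproperties}) $S^\punto_A L\simeq S^\punto_A L_1\otimes_A S^\punto_A L_2$ is a free, in particular flat, $S^\punto_A L_1$-module. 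These are the only inputs beyond the formal machinery, and they are exactly the points the theorem already settled in the split case; I expect this verification of cartesianness and flatness to be the only genuine obstacle, everything downstream being formal.

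With these in hand the rest is the identical chain. Flat base change (Proposition \ref{flatbasechange}) applied to the square gives $D_{\LL_2/\LL}=\pi_2^*D_{X/\LL_1}$, which is perfect since $D_{X/\LL_1}$ is (by Theorem \ref{berziniano=dualizante} it is concentrated in a single degree and locally free). Transitivity (Proposition \ref{transitivity}) along $s=j\circ s_2$ then yields
\[ D_{X/\LL}=\LL s_2^*D_{\LL_2/\LL}\overset\LL\otimes D_{X/\LL_2}=\LL s_2^*\pi_2^*D_{X/\LL_1}\overset\LL\otimes D_{X/\LL_2}=D_{X/\LL_1}\overset\LL\otimes D_{X/\LL_2}, \]
using $\pi_2\circ s_2=\id_X$ for the last identification. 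Finally, since each $D_{X/\LL_i}$ is by Theorem \ref{berziniano=dualizante} quasi-isomorphic to the free $A$-module $\omega_{X/\LL_i}=\Ber_A(L_i)$ placed in degree $p_i$ (the even rank of $L_i$), no higher $\Tor$ intervenes, and taking $H^{p}$ with $p=p_1+p_2$ of the displayed derived tensor product gives $\omega_{X/\LL}=\omega_{X/\LL_1}\otimes_A\omega_{X/\LL_2}$, whence $\Ber_A(L)=\Ber_A(L_1)\otimes_A\Ber_A(L_2)$.
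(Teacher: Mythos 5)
Your proposal is correct and follows essentially the same route as the paper, whose proof of this proposition is literally "the same argument given in the proof of Theorem \ref{berziniano=dualizante}": factor the zero section as $s=j\circ s_2$, apply flat base change to the cartesian square over $s_1$, and then transitivity. Your explicit verification that the square is cartesian (via right-exactness of $S^\punto_A$) and that $f$ is flat (via the splitting of the sequence of free modules) fills in exactly the details the paper leaves implicit, and the final passage from the derived tensor product to the sheaf-level statement is justified correctly since each $D_{X/\LL_i}$ is concentrated with free rank-one cohomology.
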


\subsection{The Berezin function}

A morphism of $A$-supermodule $\varphi\in\HHom(A^{p|q},A^{p|q})$ can be regarded (fixed homogeneous basis) as a matrix $X\in\Mat(p|q,A)$ of order $p+q$
$$
X=\left ( 
\begin{array}{c|c}
A & B \\
\hline 
C & D
\end{array}
\right )$$
Note that $\Mat(p|q,A)$ is a $A$-superalgebra by declaring that $X$ is even (resp. odd) if $A$ and $D$ have even (resp. odd) entries, while $B$ and $C$ have odd (resp. even) entries. Consider the subgroup of even invertible morphisms $\GL(p|q, A)\subseteq \Mat(p|q,A)_0$, that is an analogue to the general linear group in the ordinary sense. In particular, $\GL(1|0, A)=A_0^*$ (the group of invertible elements of $A_0$) and it is known that $X\in\GL(p|q, A)$ if and only if $A\in\GL(p,A_0)$ and $D\in\GL(q,A_0)$, that is, if and only if $A$ and $D$ are invertible as ordinary matrices with entries in $A_0$. Then, the supergeometric analog of the usual determinant on elements of $\GL(p|q, A)$ is the so called Berezin function (or just the Berezinian) and is defined to be the morphism
\begin{equation}\label{eq:Berezinfuntion}
\aligned
\Ber\colon \GL(p|q, A)&\to A_0^*\\
X&\mapsto \det(D^{-1})\det(A-BD^{-1}C)
\endaligned
\end{equation}

The Berezin function first appeared in \cite{Ber87} where Berezin considers an extension of the Jacobian in ordinary differential geometry to calculate the super version of the change of variable formula for integral forms on supermanifolds, likely the first topic of supergeometry in which the generalization from the purely even case is not obviuos. Let us show how to obtain the expression of the Berezin function. First, in analogy to how the determinant of an automorphism is defined in the purely even case, we shall give the definition of the Berezinian of an automorphism of a free supermodule by using  the Berezinian modules given above (Definition \ref{def:Berezinian mod}). For any even morphism $f\colon L_1\to L_2$ between two finite rank free $A$-supermodules let us denote by $f^*\colon L_2^*\to L_1^*$ the transpose morphism and by  $\bigwedge^p f\colon \bigwedge^p_A L_1\to  \bigwedge^p_A L_2$ the induced morphism between  the $p$-th exterior powers. We also denote by 
$S^\punto(f)\colon B_1\to B_2$ the induced morphism between their symmetric algebras, where $B_i= S^\bullet(L_i)$. One has a natural morphism $\SKos_A(L_1)\otimes _{B_1} B_2\to\SKos_A(L_2)$ which in degree $-p$ is 
$$
({\wedge}^p f)\otimes 1\colon {\bigwedge}^{p}_AL_1\otimes_A B_2[-p] \longrightarrow {\bigwedge}^{p}L_2\otimes_A B_2[-p]\,.
$$
Taking dual, one has the morphism 
\begin{equation}\label{1} 
\BBer_A^\bullet(L_2)\to \BBer_A^\bullet(L_1)\otimes_{B_1}B_2 
\end{equation} 
which, in degree $p$, is given by
\[ ({\wedge}^p f^*)\otimes 1\colon {\bigwedge}^{p}_AL_2^*\otimes_A B_2[p] \longrightarrow {\bigwedge}^{p}L_1^*\otimes_A B_2[p]\,.\]
On the other hand, one has the morphism
\begin{equation}\label{2} 
\BBer_A(L_1)=\BBer_A(L_1)\otimes_{B_1}B_1\overset{1\otimes S^\punto(f)}\to\BBer_A(L_1)\otimes_{B_1} B_2\,.
\end {equation}
If, in addition, $f\colon L_1\to L_2$ is an isomorphism, then the morphisms \ref{1} and \ref{2} so are and the composition of \ref{1} and the inverse of \ref{2} yields an isomorphism of complexes of $A$-supermodules
$$\BBer^\bullet(f):=({\bigwedge}^{\punto} f^*)\otimes S^\punto(f^{-1})\colon \BBer_{A}^\bullet(L_2)\to \BBer_{A}^\bullet(L_1)$$
which in degree $p$ is the isomorphism of 
$A$-supermodules 
$$\displaystyle{{\bigwedge}^{p} f^*\otimes S^\punto(f^{-1})\colon {\bigwedge}^p_A L_2^*\otimes_A B_2[p]\isom {\bigwedge}^p_A L_1^*\otimes_A B_1[p]}\,.$$

Since both symmetric and exterior algebra are functorial, the isomorphism $\BBer^\bullet(f)$ is compatible with composition $\BBer^\bullet(f\circ g)=\BBer^\bullet(g)\circ \BBer^\bullet(f)$ and identity $\BBer^\bullet(id_L)=id_{\BBer^\bullet_A(L)}$.

In particular, given a free $A$-supermodule $L$ of finite rank, any automorphism $\tau\colon L\isom L$ yields an automorphism, say $\BBer^\bullet(\tau)$, of  $\BBer_A^\bullet(L)$. Taking the $p$-th cohomology, one gets that the induced automorphism 
$$\Ber(\tau)\colon\Ber_A(L)\isom \Ber_A(L)$$ is the multiplication by a unit in $A_0$.

\begin{definition}
The Berezinian of an automorphism $\tau\colon L\to L$ of a free $A$-supermodule $L$ is defined to be the unit $\Ber(\tau)\in A_0$ corresponding to  $\Ber(\tau)\colon\Ber_A(L)\isom \Ber_A(L)$.
\end{definition}

\begin{prop}\label{berezinian-compute}
\hspace{2em}
\begin{enumerate}
\item The Berezinian is a morphism of groups: if $\tau,\tau'\in {\Aut}(L)$, then $\Ber(\tau\circ \tau')=\Ber(\tau)\cdot\Ber(\tau')$.
\item Additivity: Let $\tau_i\colon L_i\to L_i$ be an automorphism for any $i=1,2$, then $\Ber(\tau_1\oplus\tau_2)=\Ber(\tau_1)\cdot\Ber(\tau_2)$.
\item If $L$ is a free supermodule of finite and purely even rank, that is $L\simeq A^p$, then the Berezinian of $\tau\in \Aut_A(L)$ is the determinant $\det(\tau)$ of $\tau$.
\item If $L\simeq \Pi A^q$, then $\Ber(\tau)=\det(\tau)^{-1}$.
\item If $\tau\colon A^p\oplus\Pi A^q\to A^p\oplus\Pi A^q$ is given by $
\left ( 
\begin{array}{c|c}
A & 0 \\
\hline 
0 & D
\end{array}
\right )$, then $\Ber(\tau)=\det A\cdot \det D^{-1}$.
\end{enumerate}
\end{prop}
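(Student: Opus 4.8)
The plan is to deduce all five items from the functoriality of $\BBer^\bullet(-)$ recorded just before the statement together with the explicit description of the Berezinian module in Theorem \ref{thm:CohomBerFree}, treating the items in the order (1), (2), (3), (4), (5), since the last one is a formal consequence of the preceding ones.

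For item (1) I would simply take $p$-th cohomology in the identity $\BBer^\bullet(\tau\circ\tau')=\BBer^\bullet(\tau')\circ\BBer^\bullet(\tau)$. Since $\Ber(\sigma)$ is by definition the scalar in $A_0$ by which $H^p(\BBer^\bullet(\sigma))$ acts on the rank-one module $\Ber_A(L)$, and since $A_0$ is commutative, composition of the cohomology maps corresponds to multiplication of the scalars, giving $\Ber(\tau\circ\tau')=\Ber(\tau')\,\Ber(\tau)=\Ber(\tau)\,\Ber(\tau')$. For item (2) I would invoke the additivity isomorphism $\BBer^\bullet_A(L_1\oplus L_2)\simeq\BBer^\bullet_A(L_1)\otimes_A\BBer^\bullet_A(L_2)$ of Proposition \ref{prop:Dual}, under which $\BBer^\bullet(\tau_1\oplus\tau_2)$ corresponds to $\BBer^\bullet(\tau_1)\otimes\BBer^\bullet(\tau_2)$; this compatibility is what must be checked, but it is immediate from the functorial formula $\BBer^\bullet(f)=(\bigwedge^\punto f^*)\otimes S^\punto(f^{-1})$ together with the multiplicativity of $\bigwedge^\punto$ and $S^\punto$ on direct sums. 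Because each Berezinian complex has cohomology concentrated in a single degree, passing to cohomology commutes with the tensor product, and the scalar by which the tensor product acts is the product of the two scalars, so $\Ber(\tau_1\oplus\tau_2)=\Ber(\tau_1)\,\Ber(\tau_2)$.

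For items (3) and (4) I would compute directly on the generator of $\Ber_A(L)$ exhibited in the proof of Theorem \ref{thm:CohomBerFree}. In the purely even case $L\simeq A^p$ one has $\Ber_A(L)=\bigwedge^p_A(A^p)^*$ sitting in symmetric degree $0$, so the factor $S^\punto(\tau^{-1})$ acts as the identity on $S^0_A L=A$ and the induced map is $\bigwedge^p\tau^*$, i.e. multiplication by $\det(\tau^*)=\det(\tau)$. In the purely odd case $L\simeq\Pi A^q$ one has $\Ber_A(L)=S^q_A(\Pi A^q)\simeq\Pi^q\bigwedge^q_A(A^q)$; now the factor $\bigwedge^0\tau^*$ is the identity while $S^q(\tau^{-1})$, transported through the isomorphism of Equation \ref{eq:simetricotwisted}, becomes $\bigwedge^q(\tau^{-1})=\det(\tau)^{-1}$, whence $\Ber(\tau)=\det(\tau)^{-1}$. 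Item (5) is then immediate: the block-diagonal $\tau$ is $\tau_1\oplus\tau_2$ with $\tau_1=A$ on $A^p$ and $\tau_2=D$ on $\Pi A^q$, so (2), (3) and (4) give $\Ber(\tau)=\det A\cdot\det D^{-1}$.

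The main obstacle I expect is the bookkeeping in items (3) and (4): one must keep track of both tensor factors $\bigwedge^\punto\tau^*$ and $S^\punto(\tau^{-1})$ in the definition of $\BBer^\bullet(\tau)$, identify precisely which symmetric degree the one-dimensional cohomology lives in, and verify that the spurious factor really acts as the identity there. The parity-swap isomorphism $S^q_A(\Pi A^q)\simeq\Pi^q\bigwedge^q_A(A^q)$ must also be handled carefully so that the odd case produces $\det(\tau)^{-1}$ rather than $\det(\tau)$; this \emph{inversion} (rather than a mere sign) is the genuinely supergeometric feature and the point most likely to require care.
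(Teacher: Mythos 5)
Your proposal follows essentially the same route as the paper's (very terse) proof: items (1) and (2) from the functoriality and additivity of $\BBer^\bullet(-)$, items (3) and (4) by identifying which tensor factor of $\BBer^\bullet(\tau)=({\bigwedge}^\punto\tau^*)\otimes S^\punto(\tau^{-1})$ acts nontrivially on the rank-one cohomology, and (5) as a formal consequence; in particular your location of $\Ber_A(A^p)$ in symmetric degree $0$ and the inversion coming from $S^q(\tau^{-1})={\bigwedge}^q(\tau)^{-1}$ via Equation \ref{eq:simetricotwisted} is exactly what the paper does. The one inaccuracy is your justification of the K\"unneth step in item (2): by Theorem \ref{thm:CohomBerFree} the Berezinian complex is \emph{not} concentrated in a single cohomological degree unless $A$ is a $\QQ$-algebra (for instance $H^i(\BBer^\bullet_A(\Pi A))$ is the $(i+1)$-torsion of $A$ for $i>0$), so ``passing to cohomology commutes with the tensor product'' cannot be argued that way in general. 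The cross-product map $H^{p_1}\otimes_A H^{p_2}\to H^{p_1+p_2}$ is nevertheless an isomorphism in the single degree you need, because the purely even factors $\BBer^\bullet_A(A^{p_i})$ split off their cohomology as free direct summands and the kernels computing $H^0$ of the purely odd factors are free direct summands of the degree-zero terms; with that repair your argument is complete and agrees with the paper's.
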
	
\begin{proof}
\begin{enumerate}
\item It follows from functoriality of symmetric and exterior algebra.
\item  It is straightforward to check by using the additivity of symmetric algebra and exterior algebra.
\item If $L=A^p$, then $\bigwedge^p\tau^*\colon \bigwedge^p L^*\to \bigwedge^p L^*$ is  the determinant of $\tau$.
\item It follows taking into account that $S^q(\tau)=\bigwedge^p(\tau)^{-1}$.
\item It is consequence of $(3)$ and $(4)$.
\end{enumerate}
\end{proof}

The next step is to understand the Berezinian of an automorphism of a free supermodule as an isomorphism between dualizing sheaves. From that, the formula for the Berezin function (\ref{eq:Berezinfuntion}) arises in a natural way.

\begin{definition} Let $f\colon L_1\to L_2$ be an isomorphism of free modules and let us still denote by $f$ the induced isomophism $ \LL_2\to\LL_1$ between the spectra of their symmetric algebras. One has a cartesian diagram
\[ \xymatrix{X \ar[r]^{s_2}\ar[d]_{\id_X} & \LL_2\ar[d]^f \\ X\ar[r]^{s_1} & \LL_1}\] where $s_i\colon X\to\LL_i$ is the zero section. This defines by flat base change (Proposition \ref{flatbasechange}) an isomorphism
\[ \omega_{X/\LL_1}(f)\colon \omega_{X/\LL_1}=\id^*_X\omega_{X/\LL_1}\to\omega_{X/\LL_2}.\] In particular, if $\tau\colon L\to L$ is an automorphism, then $\omega_{X/\LL}(\tau)\colon \omega_{X/\LL}\to \omega_{X/\LL}$ is the multiplication by a unit of $A_0$, which is also denoted by $\omega_{X/\LL}(f)$.
\end{definition}

\begin{lem}\label{berziniano=dualizante2} Via the isomorphisms $\Ber_A(L_i)=\omega_{X/\LL_i}$, one has
\[ \omega_{X/\LL_1}(f)=\Ber(f^{-1}).\]
\end{lem}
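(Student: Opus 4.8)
The plan is to prove that the identification $\omega_{X/\LL}\simeq\Ber_A(L)$ furnished by Theorem~\ref{berziniano=dualizante} is \emph{natural} in $L$ with respect to isomorphisms, the lemma being exactly the resulting compatibility. I would recall that this identification is induced, before passing to cohomology, by the morphism of complexes of $B$-supermodules
\[\Psi_L\colon s_*D_{X/\LL}=\RR\HHom_B(A,B)\longrightarrow\HHom_B(\SKos_A(L),B)=\BBer^\bullet_A(L),\]
obtained by applying $\RR\HHom_B(-,B)$ to the augmentation $\SKos_A(L)\to A$; here $\HHom_B(\SKos_A(L),B)$ already computes $\RR\HHom_B(\SKos_A(L),B)$ since $\SKos_A(L)$ is a bounded-above complex of finite free $B$-supermodules. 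Taking $H^p$ recovers the isomorphism $\omega_{X/\LL}\isom\Ber_A(L)$, so it suffices to fit $\Psi_{L_1}$, $\Psi_{L_2}$, the flat base change isomorphism and $\BBer^\bullet(f^{-1})$ into one commutative square and then apply $H^p$.

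The heart of the matter is a single compatibility. Writing $B_i=S^\punto_AL_i$ and $S^\punto(f)\colon B_1\to B_2$ for the ring isomorphism induced by $f$ (so that $A\otimes_{B_1}B_2=A$, since $S^\punto(f)$ preserves augmentations), I would first check that the canonical comparison $\SKos_A(L_1)\otimes_{B_1}B_2\to\SKos_A(L_2)$, which in degree $-p$ is $\bigwedge^pf\otimes 1$ and which defines morphisms~\eqref{1} and~\eqref{2}, is compatible with the augmentations: the square
\[\xymatrix@C=30pt{\SKos_A(L_1)\otimes_{B_1}B_2\ar[r]\ar[d] & \SKos_A(L_2)\ar[d]\\ A\ar@{=}[r] & A}\]
commutes, as one sees at once on the degree-zero terms, where both augmentations are the projection onto the degree-zero component $A$ and $\bigwedge^0f=\id$. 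Applying $\RR\HHom_{B_2}(-,B_2)$ to this square and identifying $\RR\HHom_{B_1}(A,B_1)\otimes_{B_1}B_2\isom\RR\HHom_{B_2}(A,B_2)$ through the base change isomorphism of the Remark following Proposition~\ref{flatbasechange} (this identification being precisely $D_{X/\LL_1}(f)$), one obtains the commutative square
\[\xymatrix@C=34pt{s_*D_{X/\LL_1}\ar[r]^{\Psi_{L_1}}\ar[d]_{D_{X/\LL_1}(f)} & \BBer^\bullet_A(L_1)\ar[d]^{\BBer^\bullet(f^{-1})}\\ s_*D_{X/\LL_2}\ar[r]^{\Psi_{L_2}} & \BBer^\bullet_A(L_2)}\]
Passing to $H^p$ and using $\omega_{X/\LL_i}=\Ber_A(L_i)$ then yields $\omega_{X/\LL_1}(f)=\Ber(f^{-1})$; the inverse on $f$ is forced by the fact that $\BBer^\bullet(-)=\HHom_B(\SKos_A(-),B)$ is contravariant whereas $\omega_{X/\LL_1}(-)$ is covariant.

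The main obstacle will be the step just used: matching the abstractly defined flat base change isomorphism $\omega_{X/\LL_1}(f)$, which Proposition~\ref{flatbasechange} produces through a chain of adjunctions out of the base change isomorphism $f^*i_*\simeq\bar i_*f_X^*$, with its concrete incarnation as base change of $\RR\HHom$. What must be verified is that this abstract isomorphism agrees, naturally in the augmentation square above, with the explicit isomorphism $\RR\HHom_{B_1}(A,B_1)\otimes_{B_1}B_2\isom\RR\HHom_{B_2}(A,B_2)$; this is exactly the content of the Remark after Proposition~\ref{flatbasechange} together with the functoriality recorded in Remarks~\ref{remarks-transitivity}, so the remaining work is a routine diagram chase. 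A quicker but less transparent alternative would be to argue by additivity (Proposition~\ref{additivity-2}) and multiplicativity under composition to reduce to $L=A$ and $L=\Pi A$, where a homothety by $u\in A_0^*$ acts by $u$ and by $u^{-1}$ respectively on both sides (compare Proposition~\ref{berezinian-compute}); but handling a non-diagonal $f$ in this way is less clean than establishing naturality directly.
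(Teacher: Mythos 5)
Your proposal is correct and follows essentially the same route as the paper: the paper likewise realizes $\omega_{X/\LL_1}(f)$ concretely (via adjunction and the remark after Proposition \ref{flatbasechange}) as the composition $\RR\HHom_{B_1}(A,B_1)\to\RR\HHom_{B_1}(A,B_1)\otimes_{B_1}B_2\to\RR\HHom_{B_2}(A,B_2)$ and fits it into a commutative diagram with the augmentation-induced maps to the Berezinian complexes, whose bottom row is by construction the inverse of $\BBer^\bullet(f)$. The only cosmetic difference is that the paper displays your square as a $2\times 3$ diagram making the intermediate $\otimes_{B_1}B_2$ column explicit.
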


\begin{proof} First notice that $\omega_{X/\LL_1}(f)$ coincides with its adjoint $\omega_{X/\LL_1}\to {\id_X}_*\omega_{X/\LL_2}$.

 Applying ${s_2}_*$ to $\omega_{X/\LL_1}(f)\colon \id^*_X\omega_{X/\LL_1}\to\omega_{X/\LL_2}$ one obtains the natural isomorphism $$\phi\colon f^*\RR\HHom_{\OO_{\LL_1}}({s_1}_*\OO_X,\OO_{\LL_1})\to \RR\HHom_{\OO_{\LL_2}}({s_2}_*\OO_X,\OO_{\LL_1})$$  whose adjoint $ \RR\HHom_{\OO_{\LL_1}}({s_1}_*\OO_X,\OO_{\LL_1})\to f_*\RR\HHom_{\OO_{\LL_2}}({s_2}_*\OO_X,\OO_{\LL_1})$ is the composition \[ \RR\HHom_{\OO_{\LL_1}}({s_1}_*\OO_X,\OO_{\LL_1})\to f_*f^*\RR\HHom_{\OO_{\LL_1}}({s_1}_*\OO_X,\OO_{\LL_1})\overset{f_*(\phi)}\to f_*\RR\HHom_{\OO_{\LL_2}}({s_2}_*\OO_X,\OO_{\LL_2})\] i.e., the composition
\[ \RR \HHom_{B_1}(A,B_1)\to \RR \HHom_{B_1}(A,B_1)\otimes_{B_1}B_2 \to \RR \HHom_{B_2}(A,B_2)\]  One has a commutative diagram
\[ \xymatrix{\RR \HHom_{B_1}(A,B_1)\ar[r]\ar[d] &\RR \HHom_{B_1}(A,B_1)\otimes_{B_1}B_2 \ar[r]\ar[d] & \RR \HHom_{B_2}(A,B_2) \ar[d] \\ \BBer_A(L_1)\ar[r] & \BBer_A(L_1) \otimes_{B_1}B_2 \ar[r] & \BBer_A(L_2)}\] and, by definition, the composition $\BBer_A(L_1)\to \BBer_A(L_1) \otimes_{B_1}B_2 \to \BBer_A(L_2)$ is the inverse of $\BBer_A(f)$.
\end{proof}

\begin{lem}\label{lema-berziniano} Let $0\to L_1\to L\to L_2\to 0$ be an exact sequence of free modules and let $\tau\colon L\to L$ be an automorphism leaving $L_1$ stable (i.e., $\tau(L_1)=L_1$). Let us denote $\tau_1=\tau_{\vert L_1}$ and $\tau_2$ the automorphism induced in $L_2$. Then
\[ \Ber(\tau)=\Ber (\tau_1)\cdot\Ber(\tau_2).\]
\end{lem}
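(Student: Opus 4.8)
The plan is to reduce the identity to the vanishing of the Berezinian of a single unipotent automorphism, and then to compute that Berezinian geometrically via the dualizing sheaf. We may assume all modules have finite rank. First I would choose a homogeneous basis of $L$ adapted to $L_1$, i.e.\ fix a splitting $L\simeq L_1\oplus L_2$ of the exact sequence. Since $\tau(L_1)=L_1$, with respect to this splitting $\tau$ is block upper triangular,
$$\tau=\begin{pmatrix} \tau_1 & \beta\\ 0 & \tau_2\end{pmatrix},\qquad \beta\colon L_2\to L_1 \text{ an even morphism},$$
and it factors as $\tau=(\tau_1\oplus\tau_2)\circ u_\gamma$, where $u_\gamma(\ell_1,\ell_2)=(\ell_1+\gamma\ell_2,\ell_2)$ with $\gamma=\tau_1^{-1}\beta$. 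By multiplicativity of the Berezinian (Proposition \ref{berezinian-compute}(1)) and its additivity (Proposition \ref{berezinian-compute}(2)),
$$\Ber(\tau)=\Ber(\tau_1\oplus\tau_2)\cdot\Ber(u_\gamma)=\Ber(\tau_1)\cdot\Ber(\tau_2)\cdot\Ber(u_\gamma),$$
so everything reduces to proving $\Ber(u_\gamma)=1$.

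The crucial feature of $u_\gamma$ is that it is the identity on $L_1$ and induces the identity on $L_2=L/L_1$. I would exploit this through the geometric description of the Berezinian. Writing $\tilde u\colon \LL\to\LL$ for the automorphism of $\LL=\SSpec S^\punto_A L$ induced by $u_\gamma$, Lemma \ref{berziniano=dualizante2} identifies $\Ber(u_\gamma^{-1})$ with the unit $\omega_{X/\LL}(\tilde u)$ by which $\tilde u$ acts on $\omega_{X/\LL}=\Ber_A(L)$; hence it suffices to show that $\tilde u$ acts trivially on $\omega_{X/\LL}$. Consider the cartesian square from the proof of Theorem \ref{berziniano=dualizante},
$$\xymatrix{\LL_2\ar[r]^j\ar[d]_{\pi_2} & \LL\ar[d]^f\\ X\ar[r]^{s_1} & \LL_1}$$
Because $u_\gamma$ restricts to the identity on $L_1$ one has $f\tilde u=f$, so $\tilde u$ is an automorphism of $\LL$ over $\LL_1$; and because $u_\gamma$ induces the identity on $L_2$ one has $\tilde u j=j$, the induced automorphism of $\LL_2$ being the identity. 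Thus $(\tilde u,\id_{\LL_1},\id_{\LL_2},\id_X)$ is an automorphism of the whole cartesian square over $\id_{\LL_1}$.

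With these compatibilities I would run the two isomorphisms underlying additivity while tracking $\tilde u$. By the equivariance of flat base change (Proposition \ref{flatbasechange}, together with the compatibilities recorded in Remarks \ref{remarks-transitivity}), the isomorphism $D_{\LL_2/\LL}=\pi_2^*D_{X/\LL_1}$ intertwines the action of $\tilde u$ with $\pi_2^*$ of the action of $\id_{\LL_1}$ on $D_{X/\LL_1}$; the latter being trivial, $\tilde u$ acts as the identity on $D_{\LL_2/\LL}$. Then, by the equivariance of transitivity (Proposition \ref{transitivity} and Remarks \ref{remarks-transitivity}), the isomorphism $D_{X/\LL}=\LL s_2^*D_{\LL_2/\LL}\overset{\LL}{\otimes} D_{X/\LL_2}$ intertwines the action of $\tilde u$ with $\LL s_2^*$ of its (trivial) action on $D_{\LL_2/\LL}$, tensored with the action of the identity on $D_{X/\LL_2}$. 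Hence $\tilde u$ acts trivially on $D_{X/\LL}$, so $\omega_{X/\LL}(\tilde u)=1$ and $\Ber(u_\gamma)=1$, which finishes the argument.

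The step I expect to be the main obstacle is precisely this last one: making rigorous the equivariance of the base-change and transitivity isomorphisms with respect to the automorphism $\tilde u$ of the cartesian square. This is exactly the content that the compatibility statements of Remarks \ref{remarks-transitivity} are meant to supply, but one must check that the pullback identity $\LL s_2^*\pi_2^*=\id_X^*$ (from $\pi_2 s_2=\id_X$) is respected by these identifications and that the two ``vertical'' data---the restriction of $\tilde u$ along $s_2$ and the identity on $\LL_2$---match coherently. Everything else, namely the factorization and the reductions through Proposition \ref{berezinian-compute}, is routine.
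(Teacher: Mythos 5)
Your proof is correct, and it is organized differently from the paper's. The paper proves the full multiplicative identity $\omega_{X/\LL}(\tau)=\omega_{X/\LL_1}(\tau_1)\cdot\omega_{X/\LL_2}(\tau_2)$ geometrically for an arbitrary $L_1$-stable $\tau$, by chasing $\tau$ through the transitivity and flat-base-change isomorphisms via the chain $\omega_{\LL_2/\LL}(\tau)\circ \tau_2^*(\omega_{X/\LL_1}(\pi))=\omega_{X/\LL_1}(\pi\circ\tau)=\omega_{X/\LL_1}(\tau_1\circ \pi)=\omega_{X/\LL_1}(\pi)\circ \pi_2^*(\omega_{X/\LL_1}(\tau_1))$, and then concludes with Lemma \ref{berziniano=dualizante2}. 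You instead split off the block-diagonal part algebraically: the factorization $\tau=(\tau_1\oplus\tau_2)\circ u_\gamma$ is valid (and $\gamma=\tau_1^{-1}\beta$ is even, $u_\gamma$ a genuine automorphism), Proposition \ref{berezinian-compute}(1)--(2) are proved before this lemma by pure functoriality of $S^\punto$ and $\bigwedge^\punto$, so there is no circularity, and the geometric input is only needed for the single unipotent factor. The trade-off is clear: you must still invoke exactly the same compatibilities (Remarks \ref{remarks-transitivity}(1)--(3)) that the paper uses, but in the degenerate situation where all vertical maps of the cartesian squares are identities ($f\circ\tilde u=f$ since $u_\gamma|_{L_1}=\id$, and $\tilde u\circ j=j$ since $\mathrm{pr}_2\circ u_\gamma=\mathrm{pr}_2$), which makes the diagram chase easier to verify at the cost of the extra factorization step; the paper's version handles everything in one geometric stroke but requires tracking three nontrivial vertical automorphisms simultaneously. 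Your self-diagnosed ``main obstacle'' is real but no worse than what the paper itself leaves to the reader in Remarks \ref{remarks-transitivity}, so the argument stands.
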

\begin{proof} By Proposition \ref{additivity-2}, we have that $\omega_{X/\LL}(\tau)=\omega_{X/\LL_1} (\tau_1)\cdot \omega_{X/\LL_2}(\tau_2)$ and we conclude by Lemma \ref{berziniano=dualizante2}. 

Let us give the details of the latter equality. We follow the notations of the proof of Theorem \ref{berziniano=dualizante}. Recall that the isomorphism $\omega_{X/\LL}\to \omega_{X/\LL_1}\otimes\omega_{X/\LL_2}$ is given by the isomorphisms (of transitivity and flat base change)
\[ \omega_{X/\LL}\to s_2^*\omega_{\LL_2/\LL }\otimes\omega_{X/\LL_2}\quad \text{ and} \quad \omega_{X/\LL_1}(\pi)\colon \pi_2^*\omega_{X/\LL_1}\to \omega_{\LL_2/\LL}\,.\] By (1) and (2) of Remarks \ref{remarks-transitivity} we obtain, via the isomorphism $\omega_{X/\LL}\to s_2^*\omega_{\LL_2/\LL }\otimes\omega_{X/\LL_2}$, that $\omega_{X/\LL}(\tau)=s_2^*(\omega_{\LL_2/\LL}(\tau))\otimes \omega_{X/\LL_2}(\tau_2)$. To conclude is enough to show that, via the isomorphism $\omega_{X/\LL_1}(\pi)$, one has $\omega_{X/\LL_1}(\tau_1)=s_2^*(\omega_{\LL_2/\LL}(\tau)).$ This follows by applying $s_2^*$  to the equalities $\omega_{\LL_2/\LL}(\tau)\circ \tau_2^*(\omega_{X/\LL_1}(\pi))=\omega_{X/\LL_1}(\pi\circ\tau)=\omega_{X/\LL_1}(\tau_1\circ \pi)=\omega_{X/\LL_1}(\pi)\circ \pi_2^*(\omega_{X/\LL_1}(\tau_1))$ (see (3) of Remarks \ref{remarks-transitivity}).
\end{proof}

\begin{prop}\label{prop:Berezin}
Let $L=A^p\oplus\Pi A^q$ be a free $A$-module of rank $p\vert q$ and let $\tau\colon L\to L$ be an automorphism, which is represented  by  a matrix $$\begin{pmatrix} X & Y\\ Z&T\end{pmatrix}$$ where $X$ (resp. $T$) is a $p\times p$ (resp. $q\times q$) matrix with entries in $A_0$ and $Y$ (resp. $Z$) is a $p\times q$ (resp. $q\times p$) matrix with entries in $A_1$. Then $X$ and $T$ are invertible and
\[\aligned \Ber(\tau)&= \det(X-Y  T^{-1}  Z)\cdot \det(T)^{-1} \\ &=\det(X)\cdot\det(T-ZX^{-1}Y)^{-1}.\endaligned\]
\end{prop}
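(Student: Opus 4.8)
The plan is to reduce the computation to the block-diagonal and block-unipotent cases already understood, by factoring the matrix in a Gauss-type decomposition and invoking the multiplicativity of the Berezinian. First I would settle the invertibility claim. Since $\tau$ is an even automorphism it lies in $\GL(p|q,A)$, and reducing modulo the ideal $J=A_1^2\oplus A_1$ generated by the odd elements kills the blocks $Y$ and $Z$, whose entries are odd. Hence the reduction of $\tau$ over $\bar A$ is block-diagonal with blocks $\bar X$ and $\bar T$, which must therefore be invertible over $\bar A$. As $X$ and $T$ have entries in $A_0$ and $A_1^2$ is a nilpotent ideal of $A_0$, the determinants $\det X$ and $\det T$ are units in $A_0$ (being units modulo a nilpotent ideal), so $X\in\GL(p,A_0)$ and $T\in\GL(q,A_0)$; this is precisely the criterion recalled before \eqref{eq:Berezinfuntion}.

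The main step is the factorization
$$\begin{pmatrix} X & Y\\ Z & T\end{pmatrix}=\begin{pmatrix} I & YT^{-1}\\ 0 & I\end{pmatrix}\begin{pmatrix} X-YT^{-1}Z & 0\\ 0 & T\end{pmatrix}\begin{pmatrix} I & 0\\ T^{-1}Z & I\end{pmatrix},$$
checked by a direct multiplication. Each factor is an even automorphism of $L$: the off-diagonal blocks $YT^{-1}$ and $T^{-1}Z$ have odd entries, while $X-YT^{-1}Z$ has even entries (a product odd$\cdot$even$\cdot$odd is even) and is invertible because it reduces to $\bar X$ modulo $J$. Writing $u$, $d$, $\ell$ for the three automorphisms, one has $\tau=u\circ d\circ\ell$, so by multiplicativity (Proposition \ref{berezinian-compute}(1)) and commutativity of $A_0$ we get $\Ber(\tau)=\Ber(u)\cdot\Ber(d)\cdot\Ber(\ell)$.

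To evaluate the factors, the block-diagonal automorphism $d$ gives at once $\Ber(d)=\det(X-YT^{-1}Z)\cdot\det(T)^{-1}$ by Proposition \ref{berezinian-compute}(5). For the upper-unipotent $u$, the even summand $A^p\hookrightarrow L$ is a sub-supermodule fixed pointwise by $u$, and the induced automorphism on $L/A^p\simeq\Pi A^q$ is the identity; hence Lemma \ref{lema-berziniano} gives $\Ber(u)=\Ber(\id_{A^p})\cdot\Ber(\id_{\Pi A^q})=1$. Applying the same lemma to $\ell$, now with stable sub-supermodule $\Pi A^q\hookrightarrow L$ and quotient $A^p$, yields $\Ber(\ell)=1$. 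Combining the three values gives the first formula. The second formula follows identically from the dual factorization
$$\begin{pmatrix} X & Y\\ Z & T\end{pmatrix}=\begin{pmatrix} I & 0\\ ZX^{-1} & I\end{pmatrix}\begin{pmatrix} X & 0\\ 0 & T-ZX^{-1}Y\end{pmatrix}\begin{pmatrix} I & X^{-1}Y\\ 0 & I\end{pmatrix},$$
whose unipotent factors again have Berezinian $1$ and whose diagonal factor has Berezinian $\det(X)\cdot\det(T-ZX^{-1}Y)^{-1}$.

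I expect the only genuinely delicate point to be the vanishing of the Berezinian on the unipotent factors: in the purely even case this is the familiar fact that shears have determinant $1$, but in the super setting it is not a priori clear and could even be suspected to carry a sign or a nilpotent correction. The clean way to see it is exactly the stability argument fed into Lemma \ref{lema-berziniano}, where both the restriction and the induced quotient automorphism are the identity. Everything else is the bookkeeping of parities in the decomposition and the multiplicativity already established in Proposition \ref{berezinian-compute}.
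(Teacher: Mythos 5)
Your proof is correct and follows essentially the same route as the paper: Gauss-type block factorization, multiplicativity of $\Ber$, Lemma \ref{lema-berziniano} for the factors preserving a direct summand, and Proposition \ref{berezinian-compute} for the block-diagonal piece. The only (immaterial) difference is that the paper multiplies $\tau$ by a single unipotent factor and applies Lemma \ref{lema-berziniano} directly to the resulting block-triangular matrix, whereas you carry out the full three-factor decomposition into unipotent--diagonal--unipotent.
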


\begin{proof} One has that $\det(X)$ and $\det(T)$ are units; indeed, if $\overline A=A/J$, with $J=<A_1>$, the automorphism $\overline \tau\colon \overline L\to\overline L$ (with $\overline L=L\otimes_A\overline A$) induced by $\tau$ has the matrix $$\begin{pmatrix} \overline X & 0\\ 0&\overline T\end{pmatrix}$$ and then $\det(\overline X)$ and $\det(\overline T)$ are units (of $\overline A$). Since the elements in $J$ are nilpotent one concludes, because an element $a\in A$ is a unit if and only if its image in $\overline A$ is a unit.

Now, let us prove the equality $\Ber(\tau) = \det(X-Y  T^{-1}  Z)\cdot \det(T)^{-1}$. Let $\phi\colon L\to L$ be an automorphism whose associated matrix is $\begin{pmatrix} 1 & 0 \\ M & 1 \end{pmatrix}$ and such that 
\[ \begin{pmatrix} X & Y \\ Z & T \end{pmatrix}\cdot \begin{pmatrix} 1 & 0 \\ M & 1 \end{pmatrix}=\begin{pmatrix} X' & Y \\ 0 & T  \end{pmatrix}.\] That is, take $M=- T^{-1}  Z$, and then $X'=X-Y  T^{-1}  Z$. By Proposition \ref{berezinian-compute}  and Lemma \ref{lema-berziniano} we obtain $\Ber(\tau\circ \phi)=\det(X')\cdot \det(T)^{-1}$ and $\Ber(\phi)=\det(Id)\cdot\det(Id)^{-1}$. Therefore  
$$\det(X-Y T^{-1}  Z)\cdot \det(T)^{-1}=\Ber(\tau\circ \phi)=\Ber(\tau)\cdot\Ber(\phi)=\Ber(\tau)\cdot 1=\Ber(\tau)\,.$$
The equality $\Ber(\tau)=\det(X)\cdot\det(T-ZX^{-1}Y)^{-1}$ is proved in a similar way.
\end{proof}

\section{The super Bott formula}\label{sec:Bott}
The Bott formula allows to compute the dimension of $H^i(\bbP^n,\Omega^p_{\bbP^n}(r))$ on the complex projective space $\bbP^n$, where $\Omega^p_{\bbP^n}(r)=,\Omega^p_{\bbP^n}\otimes_{\cO_\bbP} \cO_\bbP(r)$, by means of combinatorial numbers depending on $n,p$ and $r$. One has (\cite{Bott57}\cite{OSS80}):
$$
\dim_\bbC H^i(\bbP^n,\Omega^p_{\bbP^n}(r))=
\begin{cases}
\binom{r-1}{p}\binom{r+n-p}{r}&\text{ for } i=0, 0\leq p\leq n, p<r\\
1& \text{ for }r=0, 0\leq p=i\leq n\\
\binom{-r-1}{n-p}\binom{-r+p}{-r}& \text{ for } i=n, 0\leq p\leq n, r<p-n\\\
0& \text{ otherwise} 
\end{cases}
$$
A relative version of the Bott formula is given by these authors in \cite{ASS16},   replacing the projective space by the projective bundle associated to a vector bundle on a scheme $\pi \colon\bbP(\cE)\to X$ and the cohomology groups $H^i(\bbP_k^n,\Omega^p_{\bbP^n_k}(r))$ by the higher direct images $R^i\pi_*\Omega^p_{\bbP(\cE)}(r)$. The Koszul complex of the vector bundle $\cE$ plays a crucial role in that computation since (roughly speaking) its homogeneous localization is a $\pi_*$-acyclic resolution of 
$\Omega^p_{\bbP(\cE)}(r)$ whose image by $\pi$ is the Koszul complex of $\cE$. Following this focus, an analog to the Bott formula for projective superspaces will be provided in this section. Before we begin, a remark is now in order. As we have already shown, the super Koszul complex is not in general acyclic. This is the reason  why the arguments given in \cite{ASS16} do not extend to the supercommutative case in an obvious fashion and some additional work is required.

\subsection{Cohomology of projective superspaces}
To begin with, we compute the cohomology groups of line bundles on the projective superspaces. Of course this is well-known (see for instance \cite{BR2023} or \cite{CN18}), nevertheless we include it for the convenience for further computations. Notice further that our point of view for computing these groups is slightly different from those considered so far since all of them are obtained at the same time as homogeneous components of certain $\bbZ$-graded module. 

Let $A$ be a supercommutative ring and $L=A^{m+1|n}$. Let us denote 
$$B=S^\bullet_A(L)=A[x_0,\cdots,x_{m},\theta_1,\cdots,\theta_n]$$ the polynomial supercommutative algebra over $A$ with $x_i$ even variables and $\theta_i$ odd variables. Given $\bbP^{m|n}_A=\SProj_A B$ and $\bbA^{m+1|n}=\SSpec B$, let $\sigma_0\colon \SSpec\, A\hookrightarrow \bbA^{m+1|n}$ the zero section, $U=\bbA^{m+1|n}-\{0\}$ the corresponding complementary open subset and $$\pi\colon \bbA^{m+1|n}-\{0\}\to \bbP^{m|n}$$ the quotient map (by the action of the multiplicative group). For every even variable $x_i$, put $U_{x_i}^h=\bbP_{\bar{A}}^m-\big(x_i\big)^+_0$ and $U_{x_i}=\SSpec \bar{A}[x_0\dots,x_m]-(x_i)_0$\,.

We also denote by $\cO_{\bbP^{m|n}}(r)$ the line bundle of rank $(1,0)$ defined as $\cO_{\bbP^{m|n}}(r)=\widetilde{B[r]}^h$, for any $r\in \bbZ$. Unless confusion can arise, we shall simply denote  $\bbP=\bbP^{m|n}$ and $\cO_{\bbP}(r)=\cO_{\bbP^{m|n}}(r)$.

\begin{lem}\label{prop:Ucoh}
Let $\cO_U$ be the restriction of $\widetilde{B}$ to $U$. Then, $\pi_*\cO_U=\displaystyle\bigoplus_{r\in\bbZ}\cO_{\bbP}(r)$ and $$H^i(U,\cO_U)=\displaystyle\bigoplus_{r\in\bbZ}H^i\big(\bbP,\cO_{\bbP}(r)\big)\,.$$
Consequently, $H^i\big(\bbP,\cO_{\bbP}(r)\big)$ is equal to the $r$-th homogeneous component of $H^i(U,\cO_U)$. 
\end{lem}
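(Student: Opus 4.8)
The plan is to exploit that $\pi\colon U\to\bbP$ is an \emph{affine} morphism together with the $\bbZ$-grading of $B$, and then to read off the cohomology degree by degree. First I would fix the two standard affine covers. Topologically the underlying space of $\bbA^{m+1|n}$ is $\Spec\bar{A}[x_0,\dots,x_m]$ (the odd variables $\theta_j$ are nilpotent in the bosonic reduction and contribute nothing to the topology), and the zero section $\{0\}$ is the closed set cut out by $(x_0,\dots,x_m)$. Hence $U=\bigcup_{i=0}^m U_{x_i}$ with $\cO_U|_{U_{x_i}}=\widetilde{B_{x_i}}$, while $\bbP=\bigcup_{i=0}^m U_{x_i}^h$ with $\cO_\bbP|_{U_{x_i}^h}=\widetilde{[B_{x_i}]_0}$. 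By construction $\pi^{-1}(U_{x_i}^h)=U_{x_i}$, and over this chart $\pi$ is induced by the inclusion of superrings $[B_{x_i}]_0\hookrightarrow B_{x_i}$; therefore $\pi$ is affine.

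Next I would identify $\pi_*\cO_U$. Since every variable has degree $1$, the localization $B_{x_i}$ is $\bbZ$-graded and $x_i$ is a homogeneous unit of degree $1$, so multiplication by $x_i^{-r}$ yields an isomorphism $[B_{x_i}]_r\iso[B_{x_i}]_0$ and $B_{x_i}=\bigoplus_{r\in\bbZ}[B_{x_i}]_r$. By definition $\cO_\bbP(r)(U_{x_i}^h)=[B_{x_i}[r]]_0=[B_{x_i}]_r$, and these gradings are compatible on the overlaps $U_{x_i}^h\cap U_{x_j}^h$ because the transition maps are the degree-preserving restriction maps of $\widetilde{B}$. This gives the first assertion $\pi_*\cO_U=\bigoplus_{r\in\bbZ}\cO_\bbP(r)$, as a $\bbZ$-graded sheaf on $\bbP$. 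To pass to cohomology, since $\pi$ is affine the higher direct images $R^j\pi_*\cO_U$ vanish for $j>0$, so $H^i(U,\cO_U)=H^i(\bbP,\pi_*\cO_U)$; and because $\bbP$ is covered by the finitely many affine opens $U_{x_i}^h$ whose finite intersections are again affine, cohomology is computed by the \v{C}ech complex of this cover, which commutes with the direct sum $\bigoplus_r$. Hence $H^i(\bbP,\pi_*\cO_U)=\bigoplus_{r\in\bbZ}H^i(\bbP,\cO_\bbP(r))$, and since the grading is preserved at every stage the $r$-th homogeneous component of $H^i(U,\cO_U)$ is exactly $H^i(\bbP,\cO_\bbP(r))$.

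The grading bookkeeping on overlaps is routine. The one point to treat with care is the super incarnation of the two standard inputs, namely that affineness of $\pi$ forces $R^{>0}\pi_*\cO_U=0$ and that sheaf cohomology commutes with the infinite direct sum over $r$. Both reduce to ordinary scheme theory once one observes that the underlying topological space of any affine superscheme $\SSpec C$ is the Noetherian space $\Spec\bar{C}$, and that a quasi-coherent $\widetilde{C}$-module splits, as a sheaf of $C_0$-modules, into its even and odd parts, each an honest quasi-coherent sheaf on $\Spec\bar{C}$. To these Grothendieck vanishing on an affine scheme and the commutation of cohomology with direct sums on a Noetherian space apply verbatim, which is exactly what closes the argument.
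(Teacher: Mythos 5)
Your proposal is correct and follows essentially the same route as the paper: identify $\pi_*\cO_U$ chart by chart on the $U_{x_i}^h$ via $B_{x_i}=\bigoplus_r[B_{x_i}]_r=\bigoplus_r\cO_\bbP(r)(U_{x_i}^h)$, then use affineness of $\pi$ to reduce $H^i(U,\cO_U)$ to $H^i(\bbP,\pi_*\cO_U)$ and commute cohomology with the direct sum. The only difference is that you spell out the justifications (\v{C}ech computation on the finite affine cover, reduction of the super statements to ordinary quasi-coherent sheaf theory) that the paper leaves implicit.
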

\begin{proof}
By using the notations mentioned earlier we have
$$
\aligned
\pi_*\cO_U(U_{x_i}^h)&=\cO_U(\pi^{-1}(U_{x_i}^h))=\cO_U(U_{x_i})=\widetilde{B}(U_{x_i})=B_{x_i}=\bigoplus_{r\in\bbZ}\big[B_{x_i}\big]_r=\\
&=\bigoplus_{r\in\bbZ}\Big[B[r]_{x_i}\Big]_0=\bigoplus_{r\in\bbZ}\widetilde{B[r]}^h(U_{x_i}^h)=\displaystyle\bigoplus_{r\in\bbZ}\cO_{\bbP^{m|n}}(r)(U_{x_i}^h)\,.
\endaligned
$$
Since $\pi$ is an affine morphism it follows that 
$$H^i(U,\cO_U)=H^i(\bbP,\pi_*\cO_U)=\displaystyle\bigoplus_{r\in\bbZ}H^i\big(\bbP,\cO_{\bbP}(r)\big)\,.$$
\end{proof}

For the case  $m=0$, a straightforward calculation shows that the projective $(0,n)$-superspace $\bbP^{0|n}$ is isomorphic to $\SSpec A[\theta_1,\dots,\theta_n]$ and $\cO_{\bbP^{0|n}}(r)\simeq \cO_{\bbA^{0|n}}$, for all $r\in\bbZ$. Hence we shall only have non-trivial the $0$-th cohomolo\-gy group for any $n$ and 
$$
\aligned H^0(\bbP^{0|n},\cO_{\bbP^{0|n}}(r))&\simeq x_0^r\cdot A[\frac{\theta_1}{x_0}\dots,\frac{\theta_n}{x_0}]\simeq\displaystyle\bigoplus_{j=0}^nx^{r-j}A[\theta_1\dots,\theta_n]_j\simeq\\&=\Big[A[x]_x\otimes_AA[\theta_1\dots,\theta_n]\Big]_r\simeq\\&=\Big[H^0(\bbA^1-(x)_0,\cO_{\bbA^1-(x)_0})\otimes_AA[\theta_1\dots,\theta_n]\Big]_r\,.
\endaligned
$$

For $m>0$, the open subset $U=\bbA^{m+1|n}-\{0\}$ is no longer affine, however the computation of $H^i\big(\bbP,\cO_{\bbP}(r)\big)$ can be accomplished via the following procedure involving the local cohomology exact sequence. 

We first note that since $\theta_i$ is nilpotent for $1\leq i\leq n$ the open subset $U$ can be written as $U'\times_{\SSpec A} \bbA^{0|n}$ where $U'=\bbA^{m+1}-\{0\}$ denotes the complementary open subset of the zero section $\SSpec A\hookrightarrow \bbA^{m+1}$. Moreover $\bbA^{0|n}\to\SSpec A$ is affine and flat. Thus, by base change one has 
\begin{equation}\label{eq:n=0}
H^i(U,\cO_U)\simeq H^i(U',\cO_{U'})\otimes_A A[\theta_1,\dots,\theta_n]
\end{equation}
so that we may assume without loss of generality that $n=0$.

Next, since $H^i(\bbA^{m+1},\cO_{\bbA^{m+1}})$ vanishing for $i\neq 0$, from the local cohomology exact sequence
$$\cdots\to H^i_{\{0\}}(\bbA^{m+1},\cO_{\bbA^{m+1}})\to H^i(\bbA^{m+1},\cO_{\bbA^{m+1}})\to H^i(U,\cO_U)\to\cdots $$
we get the exact sequence
\begin{equation}\label{eq:seqlocalcohaffine}
0\to H^0_{\{0\}}(\bbA^{m+1},\cO_{\bbA^{m+1}})\to A[x_0\dots,x_m]\to H^0(U,\cO_U)\to H^1_{\{0\}}(\bbA^{m+1},\cO_{\bbA^{m+1}})\to 0
\end{equation}
and isomorphisms 
\begin{equation}\label{eq:localcohaffine}
H^i(U,\cO_U)\isom H^{i+1}_{\{0\}}(\bbA^{m+1},\cO_{\bbA^{m+1}})
\end{equation}
for all $i\geq 1$. 

Finally, $\bbA^{m+1}=\bbA^1\times_{\SSpec A}\bbA^m$ so the computation of $H^{i}_{\{0\}}(\bbA^{m+1},\cO_{\bbA^{m+1}})$ is reduced to the case $m=1$ once we have proved the following result.

\begin{lem}\label{lem:localcoh}
Let $X$ be a superscheme over  $\SSpec A$ and let $Y\subseteq X$ be a closed sub-superscheme, then $$H^1_{\{0\}}(\bbA^1)\otimes_A H^{i}_{Y}(X)\simeq H^i_{\{0\}\times_{A} Y}(\bbA^1\times_{A} X)$$
where, by a slight abuse of notation, we are writting $A$ instead of $\SSpec A$ in the fiber products and the cohomologies mean those of the corresponding the structural sheaves. 
\end{lem}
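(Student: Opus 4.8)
The plan is to prove a Künneth‑type isomorphism in the derived category of $A$-supermodules,
\[ \RR\Gamma_{\{0\}\times_A Y}(\bbA^1\times_A X,\cO)\;\simeq\;\RR\Gamma_{\{0\}}(\bbA^1,\cO_{\bbA^1})\overset\LL\otimes_A \RR\Gamma_Y(X,\cO_X), \]
and then read off cohomology. First I would record the two building blocks. The $\bbA^1$-factor is elementary: the support $\{0\}$ is the zero locus of the single non-zero-divisor $x$, so $\RR\Gamma_{\{0\}}(\bbA^1,\cO)$ is computed by the two-term localization complex $[A[x]\to A[x]_x]$ placed in degrees $0,1$; its only nonvanishing cohomology is $H^1_{\{0\}}(\bbA^1)=A[x]_x/A[x]=\bigoplus_{j\geq 1}Ax^{-j}$, which is a \emph{free} (hence flat) $A$-module, call it $K$. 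Thus $\RR\Gamma_{\{0\}}(\bbA^1)\simeq K[-1]$ with $K$ flat, and the localization complex itself has free $A$-module terms.

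Next I would set up the geometry. Write $p\colon T=\bbA^1\times_A X\to X$ for the projection, which is flat and affine, and put $V=\{0\}\times_A X=V(x)$ and $W=\bbA^1\times_A Y=p^{-1}(Y)$, so that $\{0\}\times_A Y=V\cap W$ and $\underline\Gamma_{V\cap W}=\underline\Gamma_V\circ\underline\Gamma_W$. By flat base change for local cohomology along $p$ one has $\RR\underline\Gamma_W(\cO_T)\cong p^{*}\RR\underline\Gamma_Y(\cO_X)$; representing $\RR\underline\Gamma_Y(\cO_X)$ by a complex $\mathcal J^\bullet$ of quasi-coherent $\cO_X$-modules, I pull back and then apply $\RR\underline\Gamma_V$. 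Since $V=V(x)$ is cut out by a single function whose complement $T_x=\mathbb G_m\times_A X\hookrightarrow T$ is an affine open immersion, $\RR\underline\Gamma_V$ is the exact functor $\mathcal G\mapsto[\mathcal G\to\mathcal G_x]$ applied termwise, giving
\[ \RR\underline\Gamma_{\{0\}\times_A Y}(\cO_T)\simeq\big[\,p^{*}\mathcal J^\bullet\to (p^{*}\mathcal J^\bullet)_x\,\big]. \]
Finally I take global sections. Because $p$ is affine with $p_*\cO_T=\cO_X\otimes_A A[x]$ free over $\cO_X$, and because $A[x]$ and $A[x]_x$ are free $A$-modules, the functors $\RR\Gamma(X,-)$ and $(-)\otimes_A A[x]$ (resp. $(-)\otimes_A A[x]_x$) commute, so the complex of global sections is $\RR\Gamma_Y(X,\cO_X)\otimes_A[A[x]\to A[x]_x]$, which is exactly the right-hand side of the Künneth formula.

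This establishes the isomorphism. Since $[A[x]\to A[x]_x]\simeq K[-1]$ with $K$ free over $A$, the derived tensor is underived and cohomology is computed without Tor corrections, yielding
\[ H^i_{\{0\}\times_A Y}(\bbA^1\times_A X)\;\cong\;K\otimes_A H^{i-1}_Y(X)\;=\;H^1_{\{0\}}(\bbA^1)\otimes_A H^{i-1}_Y(X), \]
which is the assertion — note that the correct cohomological index on the $X$-factor is $i-1$, the shift arising from $\RR\Gamma_{\{0\}}(\bbA^1)$ being concentrated in degree one. The main obstacle is the clean bookkeeping of the composite support $V\cap W$: one must justify that applying the single-function functor $\RR\underline\Gamma_V$ to a complex representing $\RR\underline\Gamma_W$ genuinely computes $\RR\underline\Gamma_{V\cap W}$ (this works precisely because localization at $x$ is exact and $T_x\hookrightarrow T$ is affine), and that nothing is lost on passing to global sections through the affine flat map $p$. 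The linchpin that makes the final answer an honest tensor product, rather than a spectral sequence, is the freeness of $K=A[x]_x/A[x]$ over $A$; iterated over the factors of $\bbA^{m+1}=\bbA^1\times\cdots\times\bbA^1$ it reproduces the expected concentration of $H^\bullet_{\{0\}}(\bbA^{m+1})$ in top degree used in the application.
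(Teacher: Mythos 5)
Your argument is correct and arrives at the statement in the form in which the paper actually uses it, but it takes a genuinely different route from the paper's own proof. You establish the full derived K\"unneth isomorphism $\bbR\Gamma_{\{0\}\times_A Y}(\bbA^1\times_A X,\cO)\simeq\bbR\Gamma_{\{0\}}(\bbA^1,\cO)\overset\LL\otimes_A\bbR\Gamma_Y(X,\cO_X)$ by factoring the support as $V\cap W$ with $V=\{0\}\times_A X$ and $W=\bbA^1\times_A Y$, composing $\bbR\underline{\Gamma}_V\circ\bbR\underline{\Gamma}_W$ (which at the derived level also uses that $\underline{\Gamma}_W$ of an injective is acyclic for $\underline{\Gamma}_V$ -- standard, but worth naming), invoking flat base change for local cohomology along the affine projection, and using the two-term complex $[\cO\to\cO_x]$ for the single equation $x$; freeness of $A[x]$, $A[x]_x$ and $H^1_{\{0\}}(\bbA^1)=\tfrac1xA[\tfrac1x]$ over $A$ then collapses the derived tensor to an honest one. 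The paper instead stays at the level of cohomology groups: it observes that $0\to A[x]\to A[x]_x\to H^1_{\{0\}}(\bbA^1)\to 0$ splits over $A$ because $H^1_{\{0\}}(\bbA^1)$ is free, deduces $H^{i+1}_{\{0\}\times_A X}(\bbA^1\times_A X)\simeq H^1_{\{0\}}(\bbA^1)\otimes_A H^i(X)$ (and the analogue over $X-Y$) from the local cohomology long exact sequence after flat base change, and then compares the long exact sequence of the pair $(X,X-Y)$ tensored with the flat module $H^1_{\{0\}}(\bbA^1)$ against the corresponding sequence upstairs. Both arguments hinge on the same linchpin -- flatness of $H^1_{\{0\}}(\bbA^1)$ over $A$ -- but yours buys the stronger derived statement and iterates over the factors of $\bbA^{m+1}$ more transparently, at the cost of the $\bbR\underline{\Gamma}$ machinery with composite supports, whereas the paper's is more elementary and self-contained. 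Finally, you are right to flag the index: the lemma as printed has $H^i_Y(X)$ where the degree shift forces $H^{i-1}_Y(X)$ (equivalently $H^{i+1}$ with support upstairs), and your shifted formula is the one consistent with the paper's own intermediate computation and with the way the lemma is invoked in Proposition \ref{prop:localcohaffine}.
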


\begin{proof}
Since $\bbA^1-\{0\}=\SSpec A[x]_x$ is affine  the exact sequence 
$$0\to H^0_{\{0\}}(\bbA^1)\to A[x]\to A[x]_x \to H^1_{\{0\}}(\bbA^1)\to 0$$
shows that $H^i_{\{0\}}(\bbA^1)=0$  for all $i\neq 1$ and $$H^1_{\{0\}}(\bbA^1)\simeq A[x]_x/A[x]\simeq \frac{1}{x} A[\frac{1}{x}]\,.$$
In particular, the exact sequence
\begin{equation}\label{eq:split} 
0\to A[x]\to A[x]_x \to H^1_{\{0\}}(\bbA^1)\to 0
\end{equation}
splits (as $A$-supermodules) because of $H^1_{\{0\}}(\bbA^1)$ is a free $A$-supermodule.

Consider now any superscheme $X$ over $\SSpec A$. After the flat base change $\bbA^1\to\SSpec A$ one finds that
$$
\aligned
H^i(\bbA^1\times_A X)&\simeq A[x]\otimes _AH^i(X)  \\
H^i\big((\bbA^1-\{0\})\times_A X\big)&\simeq A[x]_x\otimes _AH^i(X)\,.
\endaligned
$$
and, since the exact sequence \ref{eq:split} splits, from the long exact squence of local cohomology 
$$\cdots\to H^i(\bbA^1\times_A X)\to H^i\big((\bbA^1-\{0\})\times_A X\big)\to H^{i+1}_{0\times_A X}(\bbA^1\times_A X)\to\cdots
$$
we conclude that $$H^{i+1}_{\{0\}\times_A X}(\bbA^1\times_A X)\simeq H^1_{\{0\}}(\bbA^1)\otimes_A H^i(X)\,.$$
The same argument shows that 
$$H^{i+1}_{\{0\}\times_A (X-Y)}\big(\bbA^1\times_A (X-Y)\big)\simeq H^1_{\{0\}}(\bbA^1)\otimes_A H^i(X-Y)$$
for any $Y\subseteq X$ sub-superscheme.

Twisting by $H^1_0(\bbA^1)$ (which is flat) the long exact sequence 
$$\cdots\to H^i_Y(X)\to H^i(X)\to H^i(X-Y)\to \cdots$$
we obtain the long exact sequence
$$\cdots\to H^1_0(\bbA^1)\otimes_AH^i_Y(X)\to H^1_0(\bbA^1)\otimes_AH^i(X)\to H^1_0(\bbA^1)\otimes_AH^i(X-Y)\to \cdots$$
from which follows that 
$$H^i_{\{0\}\times_{A} Y}(\bbA^1\times_{A} X)\simeq H^1_{\{0\}}(\bbA^1)\otimes_A H^{i}_{Y}(X)\,.$$

\end{proof}
\begin{prop}\label{prop:localcohaffine}
Let $\bbA^{m}=\SSpec A[x_1,\dots,x_m]$. One has  
$$
H^i_{\{0\}}(\bbA^{m},\cO_{\bbA^{m}})=\begin{cases}
0 & i\neq m\\
\frac{1}{x_1\dots x_m}A\Big[\frac{1}{x_1},\dots,\frac{1}{x_m}\Big] & i=m
\end{cases}
$$
\end{prop}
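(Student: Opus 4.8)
The plan is to induct on $m$, peeling off one even variable at a time through the product decomposition $\bbA^m=\bbA^1\times_{\SSpec A}\bbA^{m-1}$ and feeding it into Lemma \ref{lem:localcoh}. The base case $m=1$ is already contained in the proof of Lemma \ref{lem:localcoh}: since $\bbA^1-\{0\}=\SSpec A[x_1]_{x_1}$ is affine, the local cohomology sequence shows $H^i_{\{0\}}(\bbA^1)=0$ for $i\neq 1$ and $H^1_{\{0\}}(\bbA^1)\simeq A[x_1]_{x_1}/A[x_1]\simeq \tfrac1{x_1}A[\tfrac1{x_1}]$, which is exactly the asserted value for $m=1$.

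For the inductive step I would take $X=\bbA^{m-1}=\SSpec A[x_1,\dots,x_{m-1}]$ with origin $Y=\{0\}$, together with the extra factor $\bbA^1=\SSpec A[x_m]$. The point is that under the identification $\bbA^m=\bbA^1\times_{\SSpec A}\bbA^{m-1}$ the origin of $\bbA^m$ is precisely $\{0\}\times_A\{0\}$, so that Lemma \ref{lem:localcoh} applies and produces, via the degree shift that appears in its proof (the support $\{0\}\times X$ has codimension one in $\bbA^1\times X$), an isomorphism
\[
H^i_{\{0\}}(\bbA^m)\simeq H^1_{\{0\}}(\bbA^1)\otimes_A H^{i-1}_{\{0\}}(\bbA^{m-1}).
\]
By the induction hypothesis the right-hand side vanishes unless $i-1=m-1$, that is, unless $i=m$, which gives the vanishing $H^i_{\{0\}}(\bbA^m)=0$ for $i\neq m$.

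It then remains to identify the single surviving module. For $i=m$ the displayed isomorphism reads
\[
H^m_{\{0\}}(\bbA^m)\simeq \frac1{x_m}A\Big[\frac1{x_m}\Big]\otimes_A \frac1{x_1\cdots x_{m-1}}A\Big[\frac1{x_1},\dots,\frac1{x_{m-1}}\Big],
\]
and I would finish by noting that both tensor factors are free $A$-modules: the first on the monomials $x_m^{-a_m}$ with $a_m\geq 1$, the second on the monomials $x_1^{-a_1}\cdots x_{m-1}^{-a_{m-1}}$ with every $a_j\geq 1$. Hence their tensor product over $A$ is free on the products $x_1^{-a_1}\cdots x_m^{-a_m}$ with all $a_j\geq 1$, which is exactly $\tfrac1{x_1\cdots x_m}A[\tfrac1{x_1},\dots,\tfrac1{x_m}]$.

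Beyond the base case, the argument is essentially bookkeeping once Lemma \ref{lem:localcoh} is available. The one place demanding care, and where an off-by-one slip is easy, is the degree shift: because $\{0\}\times X$ is codimension one in $\bbA^1\times X$, local cohomology jumps by one at each stage, and it is precisely this shift that pushes the surviving cohomology from degree $m-1$ up to degree $m$. I would therefore invoke the shifted isomorphism exactly as it is derived inside the proof of Lemma \ref{lem:localcoh}, rather than matching degrees naively.
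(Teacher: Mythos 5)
Your argument is correct and is essentially the paper's own proof: the paper likewise deduces Proposition \ref{prop:localcohaffine} by induction on $m$ from the shifted isomorphism $H^1_{\{0\}}(\bbA^1)\otimes_A H^{i-1}_{\{0\}}(\bbA^{m-1})\simeq H^i_{\{0\}}(\bbA^{m})$ supplied by Lemma \ref{lem:localcoh}, with the base case $m=1$ coming from the explicit computation inside that lemma's proof. You merely make explicit two points the paper leaves implicit, namely the degree shift (which you correctly flag as the one delicate bookkeeping issue) and the identification of the tensor product of the two free modules of inverse monomials with $\frac{1}{x_1\cdots x_m}A\bigl[\frac{1}{x_1},\dots,\frac{1}{x_m}\bigr]$.
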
 

\begin{proof}
From Lemma \ref{lem:localcoh} we have
\begin{equation}\label{eq:flatbasechange}
H^1_{\{0\}}(\bbA^1)\otimes_A H^{i-1}_{\{0\}}(\bbA^{m-1})\simeq H^i_{\{0\}}(\bbA^m)\,,
\end{equation}
so that the result follows proceeding by induction on $m$. 
\end{proof}

\begin{thm}\label{thm:HO(r)}
Let $B(m,n)=A[x_0\dots x_m,\theta_1\dots\theta_n]$ and $\bbP_A^{m|n}=\SProj_A B(m,n)$. For any $r\in\bbZ$ one has:
\begin{enumerate}
\item If $m=0$, then $H^i(\bbP_A^{0|n},\cO_{\bbP_A^{0|n}}(r))=\begin{cases}
\displaystyle\bigoplus_{j=0}^nx^{r-j}A[\theta_1\dots,\theta_n]_j & \text{ for } i=0\\
0 &\text{ otherwise}
\end{cases}$
\item If $m>0$, then 
$$H^i(\bbP_A^{m|n},\cO_{\bbP_A^{m|n}}(r))=
\begin{cases}
B_r & \text{ if } i=0\\
\Big[\frac{1}{x_0\cdots x_m}A[\frac{1}{x_0},\dots,\frac{1}{x_m}]\otimes_A A[\theta_1\dots,\theta_n]\Big]_r & \text{ if } i=m\\
0 &\text{ if } i\neq 0,m
\end{cases}
$$ 
\end{enumerate}
Hence, $H^0(\bbP_A^{m|n},\cO_{\bbP_A^{m|n}}(r))$ is a free $A$-supermodule of rank $(l_0,l_1)$ with
$$l_0=\displaystyle{\sum_{\substack{0\leqslant i\leqslant n\\ |i|=0}}}\binom{m+r-i}{m}\binom{n}{i}\quad \text{ and }\quad l_1=\displaystyle{\sum_{\substack{0\leqslant i\leqslant n\\ |i|=1}}}\binom{m+r-i}{m}\binom{n}{i}$$
and $H^m(\bbP_A^{m|n},\cO_{\bbP_A^{m|n}}(r))$ is a free $A$-supermodule of rank $(k_0,k_1)$ with
$$k_0=\displaystyle{\sum_{\substack{0\leqslant i\leqslant n\\ |i|=0}}}\binom{i-r-1}{m}\binom{n}{i}\quad \text{ and }\quad k_1=\displaystyle{\sum_{\substack{0\leqslant i\leqslant n\\ |i|=1}}}\binom{i-r-1}{m}\binom{n}{i}\,.$$
(Note: we are adopting the following conventions: $\binom{a}{m}=0$ whenever $a<m\neq 0$ (in particular, for $a<0$) and $\binom{a}{0}=1$ for any $a$.)
\end{thm}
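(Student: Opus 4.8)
The plan is to assemble the results already established in this section and to close with an elementary monomial count. First I would dispose of the case $m=0$: as noted right after Lemma~\ref{prop:Ucoh}, the superscheme $\bbP^{0|n}_A\simeq\SSpec A[\theta_1,\dots,\theta_n]$ is affine, so $H^i$ vanishes for $i\neq 0$, and the global sections were already computed there; this gives $(1)$.

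For $m>0$ the strategy is as follows. By Lemma~\ref{prop:Ucoh} it suffices to compute $H^i(U,\cO_U)$ as a $\bbZ$-graded module and to extract its degree-$r$ component, and by the base-change isomorphism \eqref{eq:n=0} we may assume $n=0$, so that $U=\bbA^{m+1}-\{0\}$. I would then feed the local cohomology sequences \eqref{eq:seqlocalcohaffine} and \eqref{eq:localcohaffine} with Proposition~\ref{prop:localcohaffine} applied to $\bbA^{m+1}$. Since $m+1\geq 2$, one has $H^0_{\{0\}}(\bbA^{m+1})=H^1_{\{0\}}(\bbA^{m+1})=0$, so \eqref{eq:seqlocalcohaffine} yields $H^0(U,\cO_U)\simeq A[x_0,\dots,x_m]$; and \eqref{eq:localcohaffine} together with the vanishing $H^{i+1}_{\{0\}}(\bbA^{m+1})=0$ for $i+1\neq m+1$ forces $H^i(U,\cO_U)=0$ for all $1\leq i\neq m$ and identifies $H^m(U,\cO_U)\simeq H^{m+1}_{\{0\}}(\bbA^{m+1})=\tfrac{1}{x_0\cdots x_m}A[\tfrac{1}{x_0},\dots,\tfrac{1}{x_m}]$. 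Tensoring back with $A[\theta_1,\dots,\theta_n]$ and extracting the degree-$r$ part yields the three cases of $(2)$.

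It remains to read off the ranks, which is a bookkeeping of homogeneous monomials. A homogeneous $A$-basis of $[B(m,n)]_r=H^0$ consists of the monomials $x^\alpha\theta_I$ with $|\alpha|+|I|=r$; fixing $|I|=i$ there are $\binom{m+r-i}{m}$ even monomials $x^\alpha$ of degree $r-i$ in the $m+1$ variables $x_0,\dots,x_m$ and $\binom{n}{i}$ choices of $I$, and the parity of such a basis element is the parity of $i$. Summing over $i$ of each parity produces $l_0$ and $l_1$. For $H^m$ the basis monomials are $x_0^{-a_0}\cdots x_m^{-a_m}\theta_I$ with every $a_j\geq 1$; total degree $r$ forces $a_0+\cdots+a_m=i-r$ with $i=|I|$, and the number of such strictly positive solutions is $\binom{i-r-1}{m}$, which after summing over the parity of $i$ gives $k_0$ and $k_1$.

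The only genuinely delicate point is this last combinatorial step: one must match the two stars-and-bars counts (weak compositions of $r-i$ into $m+1$ parts for $H^0$, and compositions of $i-r$ into $m+1$ strictly positive parts for $H^m$) with the binomial coefficients $\binom{m+r-i}{m}$ and $\binom{i-r-1}{m}$, and verify that the stated conventions $\binom{a}{m}=0$ for $a<m\,(m\neq 0)$ and $\binom{a}{0}=1$ absorb the degenerate ranges uniformly, namely $r-i<0$ (no positive-degree monomials) and $i-r-1<m$ (no sufficiently negative-degree monomials). Everything else is a direct application of the local cohomology machinery already in place, so I expect no essential obstacle beyond careful bookkeeping.
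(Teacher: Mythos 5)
Your proposal is correct and follows essentially the same route as the paper: the authors' proof of the $m>0$ case is exactly the assembly of Lemma~\ref{prop:Ucoh}, Equations \ref{eq:n=0}, \ref{eq:seqlocalcohaffine}, \ref{eq:localcohaffine} and Proposition~\ref{prop:localcohaffine} that you describe, with the $m=0$ case and the rank count handled as in the surrounding text. Your write-up merely supplies the bookkeeping details (the vanishing of $H^0_{\{0\}}$ and $H^1_{\{0\}}$ for $m+1\geq 2$, and the stars-and-bars counts) that the paper leaves implicit, and these are all accurate.
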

\begin{proof}
It only remains to prove the case $m>0$ which is straightforwardly verified from Lemma \ref{prop:Ucoh}, Equations \ref{eq:n=0}, \ref{eq:seqlocalcohaffine}, \ref{eq:localcohaffine} and Proposition \ref{prop:localcohaffine}.
\end{proof}

\subsection{Cohomology of \texorpdfstring{$\Omega^p_{\bbP^{m|n}/A}(r)$: }{} Bott's super formula}

For any sheaf of $\cO_\bbP$-supermodules $\cN$ we denote by  
$\cN(r)$ the sheaf $\cN\otimes_{\cO_\bbP} \cO_\bbP(r)$ and by $\widetilde{N}$ the sheaf of $\cO_\bbP$-supermodules obtained from the graded $B$-supermodule $N$ by homogeneous localization (for simplicity we omit the superscript $h$ used early). As taking homogeneous localization commutes with exterior powers and $\widetilde{(N\otimes_A B[r])}=\rho^*N(r)$ for any $A$-supermodule $N$ (note that by $N$ we denote both an $A$-supermodule and its associated sheaf on $\SSpec A$) we have 
$$\widetilde\Omega^{p}_{B/A}:={\bigwedge}^p_{\cO_\bbP}\wt\Omega_{B/A}=\wt{{\bigwedge}^p_B\Omega_{B/A}}=\big(\rho^*{\bigwedge}^p_AA^{m+1|n}\big)(-p)\,,$$
where $\rho\colon \bbP=\bbP_A^{m|n}\to \SSpec A$ is the natural morphism.

Taking homogeneous localization, the super Koszul complex $\SKos_A(A^{m+1|n})$ (see Definition \ref{def:KoszulDeRham}) lifts to a complex of $\cO_\bbP$-supermodules which we denote by $\wt{\SKos_A}(A^{m+1|n})$:
\begin{equation}\label{eq:Koszultilde}
\xymatrix@C=17pt{
\cdots \ar[r] & \widetilde\Omega^{d}_{B/A}\ar[r]^(.5){i_D}&\widetilde\Omega^{d-1}_{B/A}\ar[r]^(.6){i_D}&\cdots\ar[r]^(.34){i_D}&\widetilde\Omega_{B/A}\ar[r]^(.55){i_D}&\cO_\bbP\ar[r]  & 0
}
\end{equation}
The differentials $i_D\colon\widetilde\Omega^{d}_{B/A}\to \widetilde\Omega^{d}_{B/A}$ are morphism of $\cO_{\bbP}$-supermodules, then twistting by $\cO_\bbP(r)$ we obtain, for any $r\in\bbZ$,  a complex $\wt{\SKos_A}(A^{m+1|n})(r)$ whose differential is still denoted $i_D$.

\begin{prop}\label{prop:Koszultilde}
The complex $\wt{\SKos_A}(A^{m+1|n})$ is acyclic (that is, an exact sequence) and $$\Omega_{\bbP/A}^p=\Ker\Big(\widetilde\Omega^{p}_{B/A}\xrightarrow{i_D} \widetilde\Omega^{p-1}_{B/A}\Big)\,.$$
Then, there are exact sequences
$$0\to\Omega^p_{\bbP/A}\xrightarrow{}\widetilde{\Omega}^p_{B/A}\xrightarrow{}\Omega_{\bbP/A}^{p-1}\to 0 $$
and right and left resolutions of $\Omega^p_{\bbP/A}$:
$$0\to\Omega^p_{{\bbP}/A}\xrightarrow{} \widetilde{\Omega}^p_{B/A}\xrightarrow{} \widetilde{\Omega}^{p-1}_{B/A}\xrightarrow{}\cdots\xrightarrow{} \widetilde{\Omega}_{B/A}\xrightarrow{}  \cO_{\bbP}\to 0$$
$$\cdots \to\widetilde{\Omega}^{m+1}_{B/A}\xrightarrow{}\cdots\xrightarrow{} \widetilde{\Omega}^{p+1}_{B/A}\xrightarrow{} \Omega^p_{{\bbP}/A}\xrightarrow{} 0\,.$$
In particular (for $p=1$), we get the (super) Euler sequence 
\begin{equation}\label{eq:SuperEulerSeq}
0\to\Omega_{\bbP/A}\to \cO^{m+1|n}_{\bbP}(-1)\to\cO_{\bbP}\to 0
\end{equation}
and we conclude that 

$$\Ber(\bbP_A^{m|n}):=\Ber(\Omega_{{\bbP}/A})\isom\cO_{\bbP}(n-m-1)\,.$$
\end{prop}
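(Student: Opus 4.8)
The plan is to first prove acyclicity by a local contraction argument that works in arbitrary characteristic, then identify the cycle sheaves with the exterior powers of $\Omega_{\bbP/A}$ by means of the Euler sequence, and finally read off the Berezinian from additivity.

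First I would prove that $\wt{\SKos_A}(A^{m+1|n})$ is exact. The subtle point is that the affine super Koszul complex need not be acyclic (Theorem \ref{thm:Koszulfreemodule}) and Cartan's formula only yields acyclicity after inverting the degree; on $\bbP$ this obstruction disappears, because on each standard open $U_{x_i}^h$ the even coordinate $x_i$ becomes a unit. Concretely, on $U_{x_i}^h$ I would introduce the $\cO_\bbP$-linear degree $0$ operator $h\colon \wt\Omega^p_{B/A}\to\wt\Omega^{p+1}_{B/A}$, $\omega\mapsto x_i^{-1}\,dx_i\wedge\omega$. Since $i_D$ is a $B$-antiderivation of degree $-1$ with $i_D(dx_i)=D(x_i)=x_i$, the graded Leibniz rule gives $i_D(dx_i\wedge\omega)=x_i\,\omega-dx_i\wedge i_D\omega$, whence $i_D\circ h+h\circ i_D=\id$. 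Thus $h$ is a null-homotopy on $U_{x_i}^h$, involving no division by any integer, so the complex is exact there; as $\{U_{x_i}^h\}_{i=0}^m$ covers $\bbP$, the complex is exact and $\Ker(i_D)=\Img(i_D)$ at every spot.

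Next I would identify the cycles. For $p=1$ the Euler sequence is established by the local computation $d(x_k/x_i)=x_i^{-1}\bigl(dx_k-(x_k/x_i)\,dx_i\bigr)$ and $d(\theta_j/x_i)=x_i^{-1}\bigl(d\theta_j-(\theta_j/x_i)\,dx_i\bigr)$, which exhibits $\Omega_{\bbP/A}$ as exactly $\Ker\big(i_D\colon\wt\Omega_{B/A}=\cO_\bbP(-1)^{m+1|n}\to\cO_\bbP\big)$; this is sequence \ref{eq:SuperEulerSeq}. For general $p$, observe that $i_D$ on $\wt\Omega^\bullet_{B/A}=\bigwedge^\bullet_{\cO_\bbP}\wt\Omega_{B/A}$ is contraction with the cosection $i_D\colon\cO_\bbP(-1)^{m+1|n}\to\cO_\bbP$. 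On $U_{x_i}^h$ the even local section $x_i^{-1}dx_i$ satisfies $i_D(x_i^{-1}dx_i)=1$, so the Euler sequence splits and, by additivity of the exterior algebra (Proposition \ref{prop:SEproperties}), $\wt\Omega^p_{B/A}\cong\Omega^p_{\bbP/A}\oplus\Omega^{p-1}_{\bbP/A}\wedge(x_i^{-1}dx_i)$, with $i_D$ vanishing on the first summand and acting (up to sign) as the projection onto $\Omega^{p-1}_{\bbP/A}$ on the second. Hence locally $\Ker(i_D)=\Omega^p_{\bbP/A}$; since the inclusion $\bigwedge^p\alpha\colon\Omega^p_{\bbP/A}\hookrightarrow\wt\Omega^p_{B/A}$ induced by the Euler monomorphism $\alpha$ is globally defined and a local isomorphism onto the kernel, it is a global isomorphism, proving $\Omega^p_{\bbP/A}=\Ker(i_D)$.

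Combining this identification with $\Ker(i_D)=\Img(i_D)$ yields the short exact sequences $0\to\Omega^p_{\bbP/A}\to\wt\Omega^p_{B/A}\xrightarrow{i_D}\Omega^{p-1}_{\bbP/A}\to 0$, and splicing these produces the stated right and left resolutions (the case $p=1$ being the Euler sequence). Finally, for the Berezinian I would apply additivity (Proposition \ref{additivity-2}) to the Euler sequence to get $\Ber(\Omega_{\bbP/A})\simeq\Ber\big(\cO_\bbP(-1)^{m+1|n}\big)\otimes\Ber(\cO_\bbP)^{-1}$. As $\Ber(\cO_\bbP)\simeq\cO_\bbP$ and the Berezinian of the free twisted module $\cO_\bbP(-1)^{m+1|n}$ combines its $m+1$ even summands and its $n$ odd summands with opposite signs (precisely where the super case departs from the determinant), the even twists contribute $\cO_\bbP(-(m+1))$ and the odd twists contribute $\cO_\bbP(n)$, giving $\Ber(\Omega_{\bbP/A})\simeq\cO_\bbP(n-m-1)$. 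I expect the main obstacle to be exactness in arbitrary characteristic together with the global gluing of the kernel identification: the complex is unbounded below, an intrinsically super phenomenon, so one cannot induct downward from a top form and the naive Cartan homotopy fails; the remedy is the characteristic-free local homotopy $x_i^{-1}dx_i\wedge(-)$, after which the exterior-algebra splitting and the functorial inclusion $\bigwedge^p\alpha$ carry out the gluing.
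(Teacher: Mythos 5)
Your proposal is correct and takes essentially the same route as the paper: your local null-homotopy $h=x_i^{-1}\,dx_i\wedge(-)$ is precisely the paper's local splitting of $0\to\Ker i_D\to\wt{\Omega}_{B/A}\to\cO_{\bbP}\to 0$ (propagated to exterior powers to get the short exact sequences $0\to{\bigwedge}^k\Ker i_D\to\wt{\Omega}^k_{B/A}\to{\bigwedge}^{k-1}\Ker i_D\to 0$), and your identification of $\Ker i_D$ with $\Omega_{\bbP/A}$ uses the same morphism $d(x_j/x_i)\mapsto x_i^{-1}\bigl(dx_j-(x_j/x_i)\,dx_i\bigr)$ as the paper's $\varphi$, differing only in that you verify surjectivity onto the kernel by the local basis/splitting argument while the paper uses the chain $\Ker i_D\subseteq\Img i_D\subseteq\Omega_{\bbP/A}$. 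One caveat on the final line (which the paper itself asserts without proof): under Definition \ref{def:Berezinian mod} the even summands of $\cO_{\bbP}(-1)^{m+1|n}$ contribute the \emph{dual}, hence $\cO_{\bbP}(m+1)$, and the odd ones contribute $\cO_{\bbP}(-n)$, so your sign assignment implicitly uses the opposite (Manin-style) convention $\Ber=\det_{\mathrm{ev}}\otimes\det_{\mathrm{odd}}^{-1}$; this reproduces the stated formula $\cO_{\bbP}(n-m-1)$, but the tension with Definition \ref{def:Berezinian mod} is already present in the paper and is not introduced by your argument.
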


\begin{proof}
It follows from slightly modifying the proof for the purely even case that we gave in (\cite[Theorem 1.4]{ASS16}).

First, we observe that even though $A^{m+1|n}\otimes_A B[-1]\to B$ is not surjective, it is surjective for each positive degree, so that we get, after taking homogeneous localization, the exact sequence 
$$0\to \Ker i_D\to \widetilde{\Omega}_{B/A}\xrightarrow{i_D}  \cO_{\bbP}\to 0$$
which splits locally since $\cO_{\bbP}$ is free. Then, it induces exact sequences
$$
0\to{\bigwedge}_{\cO_{\bbP}}^k\Ker i_D\to \widetilde{\Omega}^k_{B/A}\xrightarrow{}  {\bigwedge}_{\cO_{\bbP}}^{k-1}\Ker i_D\to 0
$$
and joining all of them one obtains the acyclicity of $\wt{\SKos_A}(A^{m+1|n})$.

Now we shall prove that $\Ker i_D\simeq \Omega_{\bbP/X}$. First we show that there exists an inyective morphism $\varphi\colon \Omega_{\bbP/X}\to \widetilde\Omega_{B/A}$ whose composition with $\widetilde\Omega_{B/A}\xrightarrow{i_D} \cO_{\bbP}$ vanishes. For any even degree 1 element $x_i\in B$, let us  denote by  $U_i$ the basic open subset of $\bbP^{m|n}_A$ corresponding to the affine superspace $U^h_{x_i}=\Spec [B_{x_i}]_0$. For brevity, we will denote $[B_{x_i}]_0$ (the zero $\bbZ$-degree component of $B_{x_i}$) by $B_{(x_i)}$. 
An homogeneous basis of $\Omega_{B/A}$ is given by $\{dx_0,\dots, dx_m, d\theta_1,\dots, d\theta_n\}$ (whose elements have degree 1, $|dx_i|=0$ and $|d\theta_k|=1$). 
By the Theorem \ref{thm:relativediff}, this basis defines a basis (in degree $1$) of the graded free $B$-supermodule $B^{m+1|n}[-1]$ which we referred to as $\{e_0,\dots, e_m,\eta_1,\dots,\eta_n\}$. Then,  the natural inclusion $B_{(x_i)}\hookrightarrow B_{x_i}$ induces a natural (degree 0) homogeneous morphism of graded supermodules 
$$\Gamma(U_i, \Omega_{\bbP/A})= \Omega_{B_{(x_i)}/A}\to \Big(\Omega_{B/A}\Big)_{(x_i)}=\Gamma(U_i,\widetilde{\Omega}_{B/A})$$ by letting
$$d(x_j/x_i)\mapsto \frac{e_jx_i-x_je_i}{x_i^2}\quad (\text{for } i\neq j)\quad \text{ and }\quad 
d(\theta_k/x_i)\mapsto \frac{\eta_kx_i-\theta_ke_i}{x_i^2}\,.$$
A straightforward computation shows that on the intersections $U_i\cap U_j$ the corresponding maps agree, thus these morphisms glue and we get a global morphism $\varphi\colon \Omega_{\bbP/X}\to \widetilde\Omega_{B/A}$. Since $B_{(x_i)}\hookrightarrow B_{x_i}$ has a retract, given by $\frac{p_d(x_j,\theta_k)}{x_i^k}\mapsto\frac{p_d(x_j,\theta_k)}{x_i^d}$, it induces one at differentials level and therefore the morphism $\varphi\colon \Omega_{\bbP/X}\to \widetilde\Omega_{B/A}$ is inyective. Furthermore, the composition $\Omega_{\bbP/X}\xrightarrow{\varphi} \widetilde\Omega_{B/A}\xrightarrow{i_D}\cO_{\bbP}$ is null because in each $U_i$ since we have
$$
(i_D\circ \varphi)( d(x_j/x_i))=i_D\left(\frac{e_jx_i - x_je_i}{x_i^{2}}\right)=\frac{i_D(e_j)x_i-  i_D(e_i)x_j}{x_i^{2}} =0
$$
and
$$
(i_D\circ \varphi)( d(\theta_k/x_i))=i_D\left(\frac{\eta_k x_i - \theta_ke_i}{x_i^{2}}\right)=\frac{i_D(\eta_k)x_i-  i_D(e_i)\theta_k}{x_i^{2}} =0
$$
because $i_D(e_j)=x_j,\, i_D(e_i)=x_i$ and $i_D(\eta_k)=\theta_k$. Hence, $\Omega_{\bbP/A}\subseteq \Ker i_D$.

Finally, as $\ker i_D\subseteq \Img(\widetilde\Omega^2_{B/A}\xrightarrow{i_D}\widetilde\Omega_{B/A})$ we conclude by showing that the image is also contained in $\Omega_{\bbP/A}$, for which it is enough to see that in each $U_i$ the following equalities hold:
$$
\begin{aligned}
i_D\left(\frac{e_j\wedge e_k}{x_i^2}\right)&=\frac{x_j}{x_i}d\left(\frac{x_k}{x_i}\right)-\frac{x_k}{x_i}d\left(\frac{x_j}{x_i}\right)
\\
i_D\left(\frac{\eta_j\wedge \eta_k}{x_i^2}\right)&=-\frac{\theta_j}{x_i}d\left(\frac{\theta_k}{x_i}\right)-\frac{\theta_k}{x_i}d\left(\frac{\theta_j}{x_i}\right)\\
i_D\left(\frac{\eta_j\wedge e_k}{x_i^2}\right)&=\frac{\theta_j}{x_i}d\left(\frac{x_k}{x_i}\right)+\frac{x_k}{x_i}d\left(\frac{\theta_j}{x_i}\right)
\end{aligned}
$$
where all right members belong to $\Omega_{B_{(x_i)}/A}$.
\end{proof}

As in the ordinary setting, the differentials $d\colon \Omega^p_{B/A}\to \Omega^{p+1}_{B/A}$ are compatible with homogeneous localization. Hence, for any $r\in\bbZ$, we have morphisms of sheaves (which also denote by $d$)
$$d\colon \wt\Omega^p_{B/A}(r)\to \wt\Omega^{p+1}_{B/A}(r)$$ such that $d\circ i_D+i_D\circ d=r\, Id$ on $\wt\Omega^p_{B/A}(r)$. It should be noticed that the morphisms $d$ are $A$-linear but not $\cO_{\bbP}$-linear, therefore the complex $(\wt\Omega^{\bullet}_{B/A}(r),d)$ is not the complex obtained for $r=0$ twisted by $\cO_{\bbP}(r)$. This yields the following splitting result.

\begin{cor}\label{cor:Qsplit}
If $A$ is a $\bbQ$-algebra, then for any $r\neq 0$ the exact sequences
$$0\to \Omega^p_{\bbP/A} (r)\to \wt\Omega^p_{B/A}(r)\to \Omega^{p-1}_{\bbP/A} (r)\to 0$$ split as sheaves of $A$-modules (but not as $\cO_\bbP$-modules).
\end{cor}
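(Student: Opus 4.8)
The plan is to produce an explicit $A$-linear retraction of the inclusion $\Omega^p_{\bbP/A}(r)\hookrightarrow\wt\Omega^p_{B/A}(r)$ directly from the homotopy formula recorded just above the statement, namely $d\circ i_D+i_D\circ d=r\,\id$ on $\wt\Omega^p_{B/A}(r)$. Since $A$ is a $\bbQ$-algebra and $r\neq 0$, the scalar $r$ is invertible, so I would set $P:=\tfrac1r\,i_D\circ d\colon\wt\Omega^p_{B/A}(r)\to\wt\Omega^p_{B/A}(r)$ and claim that $P$ is a projector with image exactly $\Omega^p_{\bbP/A}(r)=\Ker i_D$ (recall that, by Proposition \ref{prop:Koszultilde}, the left-hand term of the sequence is $\Ker i_D$ and the right-hand map is $i_D$).

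Both verifications are one-line consequences of $i_D\circ i_D=0$ together with the homotopy formula. For the image, $i_D\circ P=\tfrac1r\,i_D\circ i_D\circ d=0$, whence $\Img P\subseteq\Ker i_D=\Omega^p_{\bbP/A}(r)$, so $P$ factors through the inclusion. For the restriction, if $\omega$ is a local section of $\Ker i_D$ then $d\circ i_D(\omega)=0$, and the formula reduces to $i_D\circ d(\omega)=r\,\omega$, giving $P(\omega)=\omega$. Hence $P$ restricts to the identity on $\Omega^p_{\bbP/A}(r)$; in particular $P^2=P$, so $P$ is the sought retraction and the sequence splits. Equivalently, one could define a section of $i_D$ by restricting $s:=\tfrac1r\,d$ to $\Omega^{p-1}_{\bbP/A}(r)=\Ker i_D$, for which $i_D\circ s=\id$ by the same computation.

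I do not expect a genuine obstacle here: the argument is a formal homotopy, and the only delicate point is the linearity bookkeeping that yields the parenthetical refinement of the statement. The contraction $i_D$ is $\cO_\bbP$-linear, but the exterior differential $d$ is only $A$-linear (it fails $\cO_\bbP$-linearity through the Leibniz rule), so $P=\tfrac1r\,i_D\circ d$ is a morphism of sheaves of $A$-modules and not of $\cO_\bbP$-modules. Consequently the splitting holds in the category of $A$-modules but cannot be promoted to $\cO_\bbP$-modules; this is exactly the obstruction that prevents the resolution of $\Omega^p_{\bbP/A}$ furnished by Proposition \ref{prop:Koszultilde} from being $\Gamma$-acyclic, forcing the alternative strategy adopted in the remainder of the section.
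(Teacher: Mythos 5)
Your proof is correct and follows exactly the route the paper intends: the corollary is stated as an immediate consequence of the homotopy identity $d\circ i_D+i_D\circ d=r\,\id$ on $\wt\Omega^p_{B/A}(r)$, and your explicit projector $P=\tfrac1r\,i_D\circ d$ (equivalently the section $\tfrac1r\,d$ restricted to $\Ker i_D$) is just the standard way of extracting the splitting from that identity, with the same observation that $d$ is only $A$-linear accounting for the parenthetical caveat. No gaps.
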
 

From Theorem \ref{thm:HO(r)} and by using the projection formula we obtain that when dealing with $\widetilde\Omega^p_{B/A}(r)$ we shall only have non trivial $0$-th and $m$-th cohomology groups for any integer number $r$, more precisely:
\begin{cor}\label{cor:cohhomdif}
\hspace{2em}
\begin{enumerate}
\item If $m=0$, then 
$$
H^i(\bbP^{0|n},\widetilde\Omega^p_{B/A}(r))=
\begin{cases}
{\bigwedge}^p_A A^{1|n}\otimes_A x_0^{r-p}A[\frac{\theta_1}{x_0}\dots,\frac{\theta_n}{x_0}]& \text{ if } i=0\\
0 &\text{ otherwise } 
\end{cases}
$$
\item If $m>0$, then
\begin{equation*}
H^i(\bbP^{m|n},\widetilde\Omega^p_{B/A}(r))=
\begin{cases}
{\bigwedge}^p_A A^{m+1|n}\otimes_AB_{r-p} & \text{ if } i=0\\
{\bigwedge}^p_AA^{m+1|n}\otimes_A \Big[\frac{1}{x_0\cdots x_m}A[\frac{1}{x_0},\dots,\frac{1}{x_m}]\otimes_A A[\theta_1\dots,\theta_n]\Big]_{r-p} & \text{ if } i=m\\
0 &\text{ if } i\neq 0,m
\end{cases}
\end{equation*}
\end{enumerate}
\end{cor}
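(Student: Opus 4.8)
The plan is to reduce everything to Theorem \ref{thm:HO(r)} by means of the projection formula. First I would recall the identification established just before the complex \eqref{eq:Koszultilde}, namely
$$\widetilde\Omega^p_{B/A}=\big(\rho^*{\bigwedge}^p_A A^{m+1|n}\big)(-p),$$
where $\rho\colon\bbP\to\SSpec A$ is the structural morphism. Twisting by $\cO_\bbP(r)$ then gives
$$\widetilde\Omega^p_{B/A}(r)=\big(\rho^*{\bigwedge}^p_A A^{m+1|n}\big)(r-p),$$
so that the intrinsic twist $-p$ and the external twist $r$ combine into the single twist $r-p$, moving the whole problem onto the line bundles $\cO_\bbP(r-p)$, whose cohomology is already known.

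Next I would exploit that $\bigwedge^p_A A^{m+1|n}$ is a \emph{free} $A$-supermodule of finite rank, being a finite exterior power of a free supermodule. Hence its pullback $\rho^*\bigwedge^p_A A^{m+1|n}$ is a finite free $\cO_\bbP$-supermodule, that is, a finite direct sum of parity shifts of $\cO_\bbP$. Applying the projection formula---equivalently, using that $H^i(\bbP,-)$ commutes with finite direct sums and that a free module pulls back to a free module---yields, for every $i$ and every $r$, a natural isomorphism of $A$-supermodules
$$H^i\big(\bbP,\widetilde\Omega^p_{B/A}(r)\big)\simeq {\bigwedge}^p_A A^{m+1|n}\otimes_A H^i\big(\bbP,\cO_\bbP(r-p)\big).$$

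Finally I would substitute the values of $H^i(\bbP,\cO_\bbP(r-p))$ furnished by Theorem \ref{thm:HO(r)}, with the integer there taken to be $r-p$. For $m=0$ this returns the stated expression in degree $i=0$ and zero otherwise; for $m>0$ it returns $\bigwedge^p_A A^{m+1|n}\otimes_A B_{r-p}$ in degree $0$, the displayed local-cohomology module in degree $m$, and zero in all other degrees. The genuine content is already packed into Theorem \ref{thm:HO(r)}; the remaining steps are bookkeeping, so the only point to check---and the only place where the supercommutative setting could in principle cause trouble---is that the projection formula and the compatibility of pullback with freeness continue to hold for supermodules. Since $\bigwedge^p_A A^{m+1|n}$ is free of finite rank, this is immediate, and I expect no genuine obstacle to arise.
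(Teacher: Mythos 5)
Your proposal is correct and follows exactly the paper's route: the paper derives this corollary from Theorem \ref{thm:HO(r)} together with the projection formula, using the identification $\widetilde\Omega^{p}_{B/A}=\big(\rho^*{\bigwedge}^p_AA^{m+1|n}\big)(-p)$ so that the two twists combine into $r-p$. Your write-up merely makes explicit the bookkeeping (freeness of the finite exterior power, commutation of cohomology with finite direct sums) that the paper leaves implicit.
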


Before approaching the problem of computing the cohomology modules $H^i(\bbP^{m|n},\Omega^p_{{\bbP}/A}(r))$, let us point out some remarkable differences between super context and the ordinary one. In the first place, $\SKos_A(A^{m+1|n})$  is not a below bounded complex,  so that there are non trivial cases to be analyzed when the power $p$ takes values bigger that m. Second,  since $$\Big[\frac{1}{x_0\cdots x_m}A[\frac{1}{x_0},\dots,\frac{1}{x_m}]\otimes_A A[\theta_1\dots,\theta_n]\Big]_{q}=0\iff -m-1+n<q\,,$$ it follows that $\widetilde\Omega^p_{B/A}(r)$ is acyclic if and only if $-m-1+n<r-p$.  As a consequence, even assuming $r>0$, and unlike what happens in the ordinary case, the  resolution 
\begin{equation}\label{eq:rightresolution}
0\to\Omega^p_{{\bbP}/A}(r)\xrightarrow{} \widetilde{\Omega}^p_{B/A}(r)\xrightarrow{} \widetilde{\Omega}^{p-1}_{B/A}(r)\xrightarrow{}\cdots\xrightarrow{} \widetilde{\Omega}_{B/A}(r)\xrightarrow{}  \cO_{\bbP}(r)\to 0
\end{equation}
is not necessarily $\Gamma$-acyclic, so in general it might not be useful for our purpose. 

On the other hand, rather than computing each $H^i(\bbP^{m|n},\Omega^p_{{\bbP}/A}(r))$ independently, we may address the problem from the perspective in previous section and calculating all at once, that is, considering the affine map 
$$\pi\colon U=\bbA^{m+1|n}-\{0\}\to \bbP^{m|n}$$ and 
obtaining $H^i(\bbP^{m|n},\Omega^p_{{\bbP}/A}(r))$ as the $r$-th homogeneous component of 
\begin{equation}\label{eq:HOmega}
H^i(U,\pi^*\Omega^p_{{\bbP}/A})=\bigoplus_{r\in\bbZ}H^i(\bbP^{m|n},\Omega^p_{{\bbP}/A}(r))\,.
\end{equation}

Let us set $X=\bbA^{m+1|n}$ and, as usual, given a (graded) $B$-module $M$ we denote by $\widetilde{M}$ the (graded) associated sheaf to $M$ on $X$ (note that we use the same symbol to denote both associated sheaves and homogenous localizations). The $\bbZ$-grading on $\widetilde{M}$ is that locally induced by the $\bbZ$-grading on $M$, that is, for each  basic open subset $U_{x_i}$ $$\widetilde{M}_{| U_{x_i}}=\displaystyle{\bigoplus_{r\in\bbZ} \widetilde{[M_{x_i}]_r}}\,.$$
In this notation, $\widetilde{\SKos_A L}$ is the complex of sheaves on $X$
$$
\xymatrix{
\Omega^\bullet_X\equiv  \cdots\ar[r] & \Omega^2_X\ar[r] & \Omega_X\ar[r] & \cO_X\ar[r] & 0
}
$$
where each $\Omega^q_X$ is, as $\bbZ$-graded sheaf on $X$, isomorphic to $\bigwedge^q_A L\otimes_A \cO_X[-q]$. Moreover, the cokernel of $\Omega_X\to \cO_X$ is supported on the zero section, therefore the complex $\Omega^\bullet_X$ is not acyclic. However, restricting to $U=X-\{0\}$ we get (by an analogous argument than that in Proposition \ref{prop:Koszultilde}) an acyclic complex
$$
\xymatrix{
\Omega^\bullet_U\equiv  \cdots\ar[r] & \Omega^2_U\ar[r] & \Omega_U\ar[r] & \cO_U\ar[r] & 0
}
$$
such that for every $p\geq 0$
\begin{equation}\label{eq:resolutionU}
0\to\pi^*\Omega^p_{{\bbP}/A}\xrightarrow{} {\Omega}^p_{U}\xrightarrow{} {\Omega}^{p-1}_{U}\xrightarrow{}\cdots\xrightarrow{} {\Omega}_{U}\xrightarrow{}  \cO_{U}\to 0
\end{equation}
is a right resolution of $\pi^*\Omega^p_{{\bbP}/A}$. As $\bbZ$-graded sheaf, each $\Omega^q_U$ is isomorphic to $\bigwedge^q_A L\otimes_A \cO_U[-q]$ and the $r$-th homogeneous component of $\pi_*\cO_U[-q]$ is $\cO_{\bbP}(r-q)$. It follows that
$$\pi_*\Omega_U^q\simeq  {\exto}^p_A L\otimes_A\cO_\bbP(-q) \otimes_{\cO_\bbP} \pi_*\cO_U\simeq \widetilde{\Omega}^q_{B/A}\otimes_{\cO_\bbP}\pi_*\cO_U$$
and 
\begin{equation}\label{eq:HOmegatilde}
H^i(U,\Omega_U^q)\simeq \bigoplus_{r\in\bbZ} H^i(\bbP,  \widetilde{\Omega}^{q}_{B/A}(r))\,.
\end{equation}
Hence, Corollary \ref{cor:cohhomdif} yields $H^i(U,\Omega_U^q)=0$ for all $i\neq 0,m$.

Assume $m>0$ and consider, for any $p\geq 0$, the exact triangle
$$
\bbR\Gamma_0(X,\Omega^p_X)\to\bbR\Gamma(X,\Omega^p_X)\to\bbR\Gamma(U,\Omega_U^p)
$$
Taking cohomology and by using that 
\begin{equation}\label{eq:gammazero}
 H^i_{\{0\}}(X,\Omega_X^p)\simeq{\bigwedge}^p_A A^{m+1|n}\otimes_A H^i_{\{0\}}(X,\cO_X)=0 \text{ for any } i\neq m+1
\end{equation} 
we obtain for any $p\geq 0$
\begin{equation}\label{eq:HOmegaU}
H^i(U,\Omega^p_U)=\begin{cases}
\Gamma(X,\Omega_X^p) & \text{ if } i=0\\ 
H^{m+1}_{\{0\}}(X,\Omega_X^p)& \text{ if } i=m\\
0&\text{ otherwise}
\end{cases} 
\end{equation}

Therefore $\Gamma(X,\Omega_X^\bullet)\simeq \Gamma(U,\Omega^\bullet_U)$ and the $i$-th cohomology of the complex

$$\Gamma\big(U,R^{\bullet}\big)\equiv\xymatrix@C=17pt{
0\ar[r] &\Gamma\big(U,{\Omega}^{p}_{U}\big)\ar[r]^(.65){i_D}&\dots\ar[r]^(.35){i_D} & \Gamma\big(U,{\Omega}_U\big)\ar[r]^(.5){i_D} &\Gamma(U,\cO_U)\ar[r] & 0
}
$$
is (assuming $m>0$) $$H^i\big(\Gamma(U,R^{\bullet})\big)=\begin{cases}
Z^{-p}(\SKos_A L) & \text{ if } i=0\\
H^{i-p}\big(\SKos_A (L)\big) &\text{ otherwise }
\end{cases}
$$
where $Z^{q}(\SKos_A (L))$ denotes the ${q}$-cycles of the Koszul complex $\SKos_A(L)$, that is 
$$Z^{-q}(\SKos_A (L)):=\Ker\Big(\Omega_{B/A}^q\xrightarrow{i_D}{\Omega_{B/A}^{q-1}}\Big)\,.$$

We also denote by $B^{-p}\big(H^m(U,\Omega_U^\bullet)\big)$ the $(-p)$-boundaries of the complex $H^m(U, \Omega_U^\bullet)$, that is,
$$B^{-p}\big(H^m(U,\Omega_U^\bullet)\big)=\Img\Big(H^m(U,\Omega_U^{p+1})\to H^m(U,\Omega_U^{p})\Big)\,.$$

\begin{prop}\label{prop:Bott}
Assume $m>0$ and consider the right resolution of  $\pi^\ast\Omega^p_{\bbP/A}$ given by  
$$0\to \pi^\ast\Omega^p_{\bbP/A}\to {\Omega}^p_{U}\xrightarrow{} {\Omega}^{p-1}_{U}\xrightarrow{}\cdots\xrightarrow{} {\Omega}_{U}\xrightarrow{}  \cO_{U}\to 0$$ Then, $\Gamma(U,\pi^*\Omega_\bbP^p)\simeq Z^{-p}(\SKos_AL)$, there exist isomorphisms
$$H^{i-p}(\SKos_AL)\isom H^i(U,\pi^*\Omega_\bbP^p) \quad\text{ for all }0< i\leq m-1$$
and the following sequence
$$ 
\xymatrix{
0\ar[r]& H^{m-p}\big(\SKos_A(L)\big)\ar[r] & H^m(U,\pi^*\Omega^p_\bbP)\ar[r] &{\displaystyle\bigoplus_{r\leq p-m-1+n} B^{-p}\big(H^{m}(U,\Omega_U^{\bullet})_r\big)}\ar[r] & 0
}
$$
is exact.
\end{prop}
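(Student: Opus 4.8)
The plan is to compute $H^i(U,\pi^*\Omega^p_{\bbP/A})$ by running the hypercohomology spectral sequence of the right resolution \ref{eq:resolutionU}, that is, of the brutal truncation $R^\bullet=[\Omega^p_U\to\cdots\to\cO_U]$, which is quasi-isomorphic to $\pi^*\Omega^p_{\bbP/A}$:
\[ E_1^{s,t}=H^t(U,\Omega^{p-s}_U)\ \Longrightarrow\ H^{s+t}(U,\pi^*\Omega^p_{\bbP/A}). \]
Two inputs already established make the page degenerate enough to read off the answer. First, by \ref{eq:HOmegaU} (equivalently Corollary \ref{cor:cohhomdif} and the projection formula) the sheaf cohomology $H^t(U,\Omega^q_U)$ vanishes for $t\neq 0,m$, so $E_1$ is concentrated in the two rows $t=0$ and $t=m$; second, since $U$ is covered by the $m+1$ affine opens $U_{x_i}$, one has $H^t(U,-)=0$ for $t>m$. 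The $t=0$ row is the complex $\Gamma(U,R^\bullet)$, whose terms are $\Gamma(U,\Omega^q_U)=\Omega^q_{B/A}$ (by \ref{eq:HOmegaU} and Theorem \ref{thm:relativediff}), so its cohomology is $Z^{-p}(\SKos_A L)$ at $s=0$ and $H^{s-p}(\SKos_A L)$ for $s>0$.

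Because the two nonzero rows sit $m$ apart, the only higher differential that can survive is $d_{m+1}$, and for total degree $i\le m-1$ the abutment is fed by the $t=0$ row alone, with no incoming or outgoing differential reaching it. This gives immediately $\Gamma(U,\pi^*\Omega^p_{\bbP/A})=E_\infty^{0,0}=Z^{-p}(\SKos_A L)$ and $H^i(U,\pi^*\Omega^p_{\bbP/A})=E_\infty^{i,0}=H^{i-p}(\SKos_A L)$ for $0<i\le m-1$. Equivalently, and this is the bookkeeping I would actually write down, one iterates the long exact sequences of the short exact sequences $0\to\pi^*\Omega^{j+1}_{\bbP/A}\to\Omega^{j+1}_U\to\pi^*\Omega^{j}_{\bbP/A}\to0$ coming from Proposition \ref{prop:Koszultilde} on $U$: the vanishing of $H^i(U,\Omega^{j}_U)$ for $0<i<m$ produces the dimension shift $H^i(U,\pi^*\Omega^p_{\bbP/A})\cong H^{i-1}(U,\pi^*\Omega^{p-1}_{\bbP/A})\cong\cdots\cong H^1(U,\pi^*\Omega^{p-i+1}_{\bbP/A})$, and the last group is $\operatorname{coker}\big(\Omega^{p-i+1}_{B/A}\xrightarrow{i_D}Z^{-(p-i)}(\SKos_A L)\big)=H^{i-p}(\SKos_A L)$.

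It remains to treat $i=m$, which is the only place where both rows contribute and where the real work lies. The filtration of the abutment gives a short exact sequence $0\to E_\infty^{m,0}\to H^m(U,\pi^*\Omega^p_{\bbP/A})\to E_\infty^{0,m}\to 0$. The subobject is $E_\infty^{m,0}=E_2^{m,0}=H^{m-p}(\SKos_A L)$, since the differential that could hit it would originate in the zero spot $s=-1$ of the row $t=m$. The quotient $E_\infty^{0,m}$ is the crux: rather than computing $d_{m+1}$ directly, I would identify it from the long exact sequence of $0\to\pi^*\Omega^p_{\bbP/A}\to\Omega^p_U\to\pi^*\Omega^{p-1}_{\bbP/A}\to0$ as the image of $H^m(U,\pi^*\Omega^p_{\bbP/A})\to H^m(U,\Omega^p_U)$. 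The key step, and the main obstacle, is to show this image equals the boundary submodule $B^{-p}\big(H^m(U,\Omega_U^\bullet)\big)$: applying $H^m(U,-)$ to $0\to\pi^*\Omega^{p+1}_{\bbP/A}\to\Omega^{p+1}_U\to\pi^*\Omega^{p}_{\bbP/A}\to0$ and invoking $H^{m+1}(U,\pi^*\Omega^{p+1}_{\bbP/A})=0$ (cohomological dimension $m$) shows that $H^m(U,\Omega^{p+1}_U)\to H^m(U,\pi^*\Omega^{p}_{\bbP/A})$ is surjective; since the composite $H^m(U,\Omega^{p+1}_U)\to H^m(U,\pi^*\Omega^{p}_{\bbP/A})\to H^m(U,\Omega^p_U)$ is exactly $i_D$, its image is $\Img(i_D)=B^{-p}(H^m(U,\Omega_U^\bullet))$. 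Finally Corollary \ref{cor:cohhomdif} shows the graded piece $H^m(U,\Omega^p_U)_r$, and hence $B^{-p}(\cdots)_r$, vanishes unless $r\le p-m-1+n$, rewriting $E_\infty^{0,m}$ as the stated direct sum $\bigoplus_{r\le p-m-1+n}B^{-p}(H^m(U,\Omega_U^\bullet)_r)$ and finishing the proof.
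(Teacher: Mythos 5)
Your proof is correct, and its skeleton is the same as the paper's: both exploit the fact that each $\Omega^q_U$ has cohomology only in degrees $0$ and $m$, so that the hypercohomology of the resolution degenerates into two rows --- your spectral sequence with rows $t=0$ and $t=m$ is exactly the paper's exact triangle $\Gamma(U,R^\bullet)\to \bbR\Gamma(U,\pi^*\Omega^p_{\bbP/A})\to H^m(U,R^\bullet)[-m]$, and the identifications of the $t=0$ row with the truncated Koszul complex agree. Where you genuinely diverge is in identifying the quotient term of the short exact sequence in degree $m$. The paper computes the kernel of $Z^{-p}\big(H^m(U,\Omega^\bullet_U)\big)\to H^{m+1}(\Gamma(U,R^\bullet))$ by first proving the global quasi-isomorphism $H^m(U,\Omega^\bullet_U)\simeq \SKos_A(L)[m+1]$ via local cohomology on $X=\bbA^{m+1|n}$ (using that $\Omega^\bullet_U$ is an acyclic complex of sheaves and $X$ is affine), and then reading off that the kernel is $B^{-p}$. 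You instead recognize $E_\infty^{0,m}$ as the image of the edge map $H^m(U,\pi^*\Omega^p_{\bbP/A})\to H^m(U,\Omega^p_U)$ and compute that image directly: the surjectivity of $H^m(U,\Omega^{p+1}_U)\to H^m(U,\pi^*\Omega^p_{\bbP/A})$, which follows from $H^{m+1}(U,-)=0$ because $U$ is covered by the $m+1$ affines $U_{x_i}$, forces the image to be $\Img\big(H^m(i_D)\big)=B^{-p}\big(H^m(U,\Omega^\bullet_U)\big)$. Your route is more elementary and avoids having to check that the connecting map of the triangle is compatible with the local-cohomology identification (a compatibility the paper leaves implicit); the paper's route buys the stronger structural statement $H^m(U,\Omega^\bullet_U)\simeq \SKos_A(L)[m+1]$, which is what makes the subsequent case analysis in Theorem \ref{thm:Bott} (comparing $Z^{-p}$, $B^{-p}$ and $H^{m+1-p}(\SKos_A L)$ degree by degree) immediate. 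The only point you should make explicit is the cohomological-dimension bound $H^{>m}(U,\mathcal F)=0$ for quasi-coherent $\mathcal F$, justified by the \v{C}ech complex of the affine cover $\{U_{x_i}\}_{i=0}^m$.
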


\begin{proof}
Let us denote by$\pi^\ast\Omega^p_{\bbP/A}\to R^\bullet$ the right resolution of  $\pi^\ast\Omega^p_{\bbP/A}$ given in the statement. Although $\pi^*\Omega^p_{{\bbP}/A}\to R^\bullet$ is not a resolution by acyclic sheaves, applying the functor of global sections $\Gamma(X,\--)$ one has, by de Rham's theorem in its derived version, the morphism (in the derived category)
$$\Gamma(U,R^\bullet)\to \bbR\Gamma(U,R^\bullet)\simeq \R\Gamma(U,\pi^*\Omega^p_{{\bbP}/A})\,.$$
This morphism can be embedded into an exact triangle computing its cone as follows. For any $R^{q}$, choose a resolution by acyclic sheaves, say $R^{q}\to (K^{q\bullet},\delta^{q\bullet})$. Taking global sections we have a sequence
$$0\to\Gamma(U,R^{q})\xrightarrow{}\Gamma(U,K^{q\bullet})$$ which is exact on each term $\Gamma(U,K^{q,j})$ for $j\neq m$. Denoting by  $\Gamma(U, K^{q\bullet})_{\leq t}$ the truncated complex
$$\Gamma(U,K^{q,0})\to \cdots\to \Gamma(U,K^{q,t-2})\to \Gamma(U,K^{q,t-1})\to\ker \Gamma(\delta^{q,t})\to0$$ we have that the natural morphism
$$\Gamma(U,R^{q})\xrightarrow{} \Gamma(U,K^{q\bullet})_{\leq m-1}$$
is a quasi-isomorphism and 
$$
\Gamma(U, R^{q})\to \Gamma(U,K^{q\bullet})_{\leq m}\to H^m(U, R^{q})[-m]$$
is an exact triangle. The simplex complex associated to $K^{\bullet\bullet}$ gives a resolution which allows to compute $\bbR\Gamma(U,R^{\bullet})$ and we obtain the exact triangle
\begin{equation}\label{eq:triangle}
\Gamma(U,R^{\bullet})\to \bbR\Gamma(U,\pi^*\Omega^p_{{\bbP}/A}) \to H^m(U,R^{\bullet})[-m]
\end{equation}
Taking cohomology we get the isomorphisms
$$H^i(\Gamma(U,R^{\bullet}))\isom H^i(U,\pi^*\Omega_\bbP^p) \quad\text{ for all }0\leq  i\leq m-1$$
and the exact sequence
$$ 
0\to H^{m}\big(\Gamma(U, R^{\bullet})\big)\to H^m(U,\pi^*\Omega^p_\bbP)\to H^0\big(H^{m}(U,R^{\bullet})\big)\to H^{m+1}(\Gamma(U,R^\bullet))\to 0\,.
$$
where the $i$-th cohomology of the complex $\Gamma\big(U,R^{\bullet}\big)$ is  $$H^i\big(\Gamma(U,R^{\bullet})\big)=\begin{cases}
Z^{-p}(\SKos_A L) & \text{ if } i=0\\
H^{i-p}\big(\SKos_A (L)\big) &\text{ otherwise }
\end{cases}
$$

It remains to compute the kernel of the morphism $H^0\big(H^{m}(U,R^{\bullet})\big)\to H^{m+1}(\Gamma(U,R^\bullet))$. We shall proceed by using local cohomology as follows: consider the exact triangle
$$
\xymatrix{
\bbR\Gamma_0(X,\Omega_X^\bullet)\ar[r] &\bbR\Gamma(X,\Omega_X^\bullet)\ar[r] &\bbR\Gamma(U,\Omega_U^\bullet)}.
$$
The term $\bbR\Gamma(U,\Omega_U^\bullet)$ vanishes because $\Omega_U^\bullet$ is acyclic, and furthermore,  $\bbR\Gamma(X,\Omega_X^\bullet)\simeq \Gamma(X,\Omega_X^\bullet)$ because $X$ is affine. Then, 
$$\bbR\Gamma_0(X,\Omega_X^\bullet)\simeq \Gamma(X,\Omega_X^\bullet)\simeq \SKos_A(L)\,.$$
On the other hand, by Equation \ref{eq:HOmegaU} one has the isomorphisms 
$$\bbR\Gamma_0(X,\Omega_X^\bullet)\simeq H^{m+1}_{\{0\}}(X,\Omega_X^\bullet)[-m-1]\simeq H^m(U,\Omega_U^\bullet)[-m-1]\,.$$ 
Thus, the exact triangle above gives the isomorphism
$$
H^m(U,\Omega_U^\bullet)\simeq \big(\SKos_AL\big)[m+1]\,.
$$
Finally, since $H^0(H^m(U,R^\bullet))=Z^0\big(H^m(U,\Omega_U^\bullet)[-p]\big)=Z^{-p}\big(H^m(U,\Omega_U^\bullet)\big)$, 
there exists the exact sequence
$$0\to B^{-p}\big(H^m(U,\Omega_U^\bullet)\big)\to Z^{-p}\big(H^m(U,\Omega_U^\bullet)\big)\to H^{m+1}\big(\Gamma(U,R^\bullet)\big)\to 0$$

Moreover, $H^m(U,\Omega_U^p)=0$ whenever $-m-1+n<r-p$, so that we find the exact sequence
$$ 
\xymatrix{
0\ar[r]& H^{m-p}\big(\SKos_A(L)\big)\ar[r] & H^m(U,\pi^*\Omega^p_\bbP)\ar[r] &{\displaystyle\bigoplus_{r\leq p-m-1+n} B^{-p}\big(H^{m}(U,\Omega_U^{\bullet})_r\big)}\ar[r] & 0
}
$$
and we conclude.
\end{proof}
Once the cohomology modules $H^m(U,\pi^*\Omega_\bbP^p)$ are given by means of $\SKos_A(L)$ and $H^m(U,\Omega_U^\bullet)$ the modules $H^i(\bbP^{m|n},\Omega^p_{{\bbP}/A}(r))$ arise simply taking the $r$-th homogeneous components. Notice that in order to compute the $m$-th cohomolgy, we shall have to deal with two cases: when $-m-1+n<r-p$ and therefore only $\SKos_A(L)$ contributes and when $-m-1+n\geq r-p$ and therefore $\SKos_A(L)$ and $H^m(U,\Omega_U^\bullet)$ are contributing.
 
\begin{thm}\label{thm:Bott}
Let $L=A^{m+1|n},\,B(m,n)=A[x_0\dots x_m,\theta_1\dots\theta_m]$ and $\bbP_A^{m|n}=\SProj_A B(m,n)$. 
\begin{enumerate}
\item[(a)] If $m=0$, then $$H^i(\bbP^{0|n},\Omega^p_{\bbP/A}(r))=\begin{cases}K_{p,r} &\text{ if } i=0\\
0&\text{ otherwise }
\end{cases}$$
where $K_{p,r}:=\Ker\big(\ext_A^{p}A^{1|n}\otimes_A x_0^{r-p}A[\frac{\theta_1}{x_0},\dots \frac{\theta_n}{x_0}]\to \ext_A^{p-1}A^{1|n}\otimes_A x_0^{r-p+1}A[\frac{\theta_1}{x_0},\dots \frac{\theta_n}{x_0}]\big)$.
\item [(b)] If $m>0$, then one has:
\begin{enumerate}
\renewcommand{\labelenumii}{(\emph{\arabic{enumii}})}
\item For $i=0$
$$
H^0(\bbP^{m|n},\Omega^p_{\bbP/A}(r))=
\begin{cases}
Z^{-p}(\SKos_A(L)_r) & \text{ if } r>0\\
A &\text{ if } r=0\text{ and } p=0\\
0 &\text{ otherwise } 
\end{cases}
$$
\item For $0<i<m$
$$
H^i(\bbP^{m|n},\Omega^p_{\bbP/A}(r))=
\begin{cases}
H^{i-p}(\SKos_A(L)_r) & \text{ if } r>0\\
A &\text{ if } r=0\text{ and } p=i\\
0 &\text{ otherwise } 
\end{cases}
$$
\item For $i=m$ and $-m-1+n<r-p$

$$
H^m(\bbP^{m|n},\Omega^p_{\bbP/A}(r))=
\begin{cases}
H^{m-p}(\SKos_A(L)_r) & \text{ if } r>0 \\
A &\text{ if } r=0  \text{ and } p=m\\
0  & \text{ otherwise }
\end{cases}
$$
\item For $i=m$ and $-m-1+n\geq r-p$
\begin{itemize}
\item $H^m(\bbP^{m|n},\Omega^p_{\bbP/A}(r))=B^{-p}(H^m(U,\Omega_U^\bullet)_r)$ if one of the following four cases holds: $r>0$ and $m-p>0$ or $r>0$ and $m-p<-r$ or $r=0$ and $m\neq p$ or $r<0$.
\item If $r\geq0$ and $m-p\leq 0$ and $r+m-p\geq 0$ there exists the following exact sequence
$$
\xymatrix{
0\ar[r]& H^{m-p}\big(\SKos_A(L)_r\big)\ar[r] & H^m(\bbP^{m|n},\Omega^p_{\bbP/A}(r))\ar[r] & B^{-p}\big(H^{m}(U,\Omega_U^{\bullet})_r\big)\ar[r] & 0
}
$$
which for the case $r=0$ and $p=m$ turns out to be
$$
\xymatrix{
0\ar[r]& A\ar[r] & H^m(\bbP^{m|n},\Omega^m_{\bbP/A})\ar[r] & B^{-m}\big(H^{m}(U,\Omega_U^{\bullet})_0\big)\ar[r] & 0
}
$$
\end{itemize}
\end{enumerate}
\end{enumerate}
\end{thm}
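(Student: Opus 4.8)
The plan is to deduce the whole statement from Proposition \ref{prop:Bott} by passing to $r$-th homogeneous components through Equation \ref{eq:HOmega}, and then to read off each case from two elementary structural facts about the homogeneous piece $\SKos_A(L)_r$. First, since the term in cohomological degree $-p$ of $\SKos_A(L)_r$ is $\bigwedge^p_A L\otimes_A S^{r-p}_A L$ and $S^{r-p}_A L=0$ for $r-p<0$, the complex $\SKos_A(L)_r$ is concentrated in cohomological degrees $[-r,0]$; hence $H^j(\SKos_A(L)_r)=0$ whenever $j<-r$ or $j>0$. Second, $\SKos_A(L)_r=0$ for $r<0$, while $\SKos_A(L)_0=A$ sits in degree $0$, so $H^j(\SKos_A(L)_0)=A$ if $j=0$ and $0$ otherwise.

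I would first dispose of $m=0$ (part (a)). Here $\bbP^{0|n}\simeq\SSpec A[\theta_1,\dots,\theta_n]$ is affine, so $\Gamma$ is exact and all higher cohomology vanishes; applying $\Gamma$ to the identification $\Omega^p_{\bbP/A}=\Ker(\widetilde\Omega^p_{B/A}\to\widetilde\Omega^{p-1}_{B/A})$ from Proposition \ref{prop:Koszultilde} and inserting the explicit description of $H^0(\widetilde\Omega^\bullet_{B/A}(r))$ from Corollary \ref{cor:cohhomdif}(1) identifies $H^0(\bbP^{0|n},\Omega^p_{\bbP/A}(r))$ with the kernel $K_{p,r}$.

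For $m>0$ (part (b)) I would take $r$-th components of the three outputs of Proposition \ref{prop:Bott}. For $i=0$ one has $H^0(\bbP,\Omega^p(r))=Z^{-p}(\SKos_A(L)_r)$, and the subcases follow from the second structural fact ($Z^{-p}(\SKos_A(L)_0)=A$ exactly for $p=0$; everything vanishes for $r<0$). For $0<i<m$ the isomorphism $H^{i-p}(\SKos_A(L))\simeq H^i(U,\pi^*\Omega^p_\bbP)$ gives $H^i(\bbP,\Omega^p(r))=H^{i-p}(\SKos_A(L)_r)$, and the cases split off the same way, with $H^{i-p}(\SKos_A(L)_0)=A$ precisely when $i=p$.

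The substantive case is $i=m$, governed by the short exact sequence of Proposition \ref{prop:Bott}, whose boundary term $B^{-p}(H^m(U,\Omega_U^\bullet)_r)$ contributes only over $r\le p-m-1+n$, i.e.\ only when $-m-1+n\ge r-p$. When $-m-1+n<r-p$ (case (3)) the boundary term is absent and $H^m(\bbP,\Omega^p(r))=H^{m-p}(\SKos_A(L)_r)$, read off as before. When $-m-1+n\ge r-p$ (case (4)) both terms appear, and the task is to decide when $H^{m-p}(\SKos_A(L)_r)$ vanishes: by the first structural fact this happens exactly when $m-p>0$ or $m-p<-r$ (the top and bottom of the range $[-r,0]$), together with the degenerate vanishings for $r=0,\,m\ne p$ and for $r<0$. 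These four situations are precisely the listed cases where $H^m=B^{-p}(\cdots)$, while the complementary range $r\ge0$, $m-p\le0$, $r+m-p\ge0$ is where the full short exact sequence survives, specializing to the displayed sequence with leading term $A$ when $r=0$, $p=m$. I expect the main obstacle to be organizational rather than conceptual: one must verify that the vanishing range $[-r,0]$ of the homogeneous Koszul cohomology and the boundary-contribution threshold $-m-1+n\ge r-p$ together partition $(r,p)$-space exactly into the cases stated, with no overlaps or gaps.
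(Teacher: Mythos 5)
Your proposal is correct and follows essentially the same route as the paper: everything is read off from Proposition \ref{prop:Bott} by taking $r$-th homogeneous components, using that $\SKos_A(L)_r$ is supported in cohomological degrees $[-r,0]$ (with $\SKos_A(L)_0=A$ and $\SKos_A(L)_r=0$ for $r<0$) to sort the subcases, and handling $m=0$ separately via affineness and Corollary \ref{cor:cohhomdif}. The only cosmetic difference is that in the $m=0$ case the paper phrases the computation on the affine open $U$ via the acyclic resolution by the $\Omega^q_U$, whereas you apply $\Gamma$ directly on $\bbP^{0|n}$; both yield $K_{p,r}$ identically.
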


\begin{proof}
\begin{enumerate}\renewcommand{\labelenumi}{(\emph{\alph{enumi}})}
\item For $m=0$, $U=\SSpec(A[x]_x\otimes_A A[\theta_1,\dots,\theta_n])$ is affine. By Equation \ref{eq:HOmega}, $H^i(\bbP^{0|n},\Omega^p_{\bbP/A}(r))=0$ for $i>0$. By Corollary \ref{cor:cohhomdif} and Equation \ref{eq:HOmegatilde}, 
$$
0\to\pi^*\Omega^p_{{\bbP}/A}\xrightarrow{} {\Omega}^p_{U}\xrightarrow{} {\Omega}^{p-1}_{U}\xrightarrow{}\cdots\xrightarrow{} {\Omega}_{U}\xrightarrow{}  \cO_{U}\to 0
$$
is a resolution of $\pi^*\Omega^p_{{\bbP}/A}$ by acyclic sheaves and 
$$
\aligned
H^0(\bbP^{0|n},\Omega^p_{\bbP/A}(r))&=H^0(U,\pi^*\Omega^p_{\bbP/A})_r=\\
&=\Ker\big(\ext_A^{p}A^{1|n}\otimes_A x_0^{r-p}A[\frac{\theta_1}{x_0},\dots \frac{\theta_n}{x_0}]\to \ext_A^{p-1}A^{1|n}\otimes_A x_0^{r-p+1}A[\frac{\theta_1}{x_0},\dots \frac{\theta_n}{x_0}]\big)\,.
\endaligned
$$

\item Follows from Proposition \ref{prop:Bott} and taking into account the following facts:

\begin{itemize}
\item $H^m(U,\pi^\ast\Omega^p_\bbP)_r\simeq B^{-p}\Big(H^m(U,\Omega_U^\bullet)_r\Big)$ whenever some of the following cases holds:
\begin{enumerate}\renewcommand{\labelenumi}{(\emph{\arabic{enumi}})}
\item $r<0$, because $\SKos_A(L)$ is a $\bbZ$-graded complex in non-negative degrees.
\item $m-p>0$, because $\SKos_A(L)$ is a complex in  non-positive terms.
\item $r-p+m<0$, because $\hSKos{-p+m}_A(L)_r=0$.
\end{enumerate}
\item On the other side, $H^m(U,\pi^\ast\Omega_{\bbP/A}^p)_r\simeq H^{m-p}(\SKos_A(L)_r)$ for $-m-1+n<r-p$ because, as we have already mentioned, $H^m(U,R^\bullet)=0$. 
\end{itemize}
\end{enumerate}
\end{proof} 

Assuming $m>0$, it follows from Theorem \ref{thm:Bott} that $H^i(\bbP^{m|n},\Omega^p_{\bbP/A})=H^{i-p}(\SSpec A,\tilde{A})$ for all $i\in[0,m)$. The equality still holds for $i=m$, that is, $H^m(\bbP^{m|n},\Omega^p_{\bbP/A})=H^{m-p}(\SSpec A,\tilde{A})$ provided that $p<m+1-n$. On the other hand, for  $p\geq m+1-n$ there are the following cases:
\begin{itemize}
\item If $p=m$, there exists the exact sequence
$$0\to A\to H^m(\bbP^{m|n},\Omega^m_{\bbP/A})\to Z^{-m}(H^m(U,\Omega_U^\bullet)_0)\to 0$$
\item If $p=m+1$, there exists the exact sequence
$$0\to H^m(\bbP^{m|n},\Omega^{m+1}_{\bbP/A})\to Z^{-m-1}(H^m(U,\Omega_U^\bullet)_0)\to A\to 0$$
\item If $p\neq m,m+1$, then $H^m(\bbP^{m|n},\Omega^{p}_{\bbP/A})\simeq Z^{-p}(H^m(U,\Omega_U^\bullet)_0)$.
\end{itemize}

By simplicity, let us denote 
$$
\aligned
\ell_{r,0}^+&=\displaystyle{\sum_{\substack{0\leqslant i\leqslant n\\ |i|=0}}}\binom{m+r-i}{m}\binom{n}{i}\quad&
\ell_{r,0}^-&=\displaystyle{\sum_{\substack{0\leqslant i\leqslant n\\ |i|=1}}}\binom{m+r-i}{m}\binom{n}{i}\\
\ell_{r,m}^+&=\displaystyle{\sum_{\substack{0\leqslant i\leqslant n\\ |i|=0}}}\binom{i-r-1}{m}\binom{n}{i}\quad&
\ell_{r,m}^-&=\displaystyle{\sum_{\substack{0\leqslant i\leqslant n\\ |i|=1}}}\binom{i-r-1}{m}\binom{n}{i}\\
\lambda_{p}^+&=\displaystyle{\sum_{\substack{0\leqslant i\leqslant p\\ |i|=0}}}\binom{m+1}{p-i}\binom{n+i-1}{i}\quad&\lambda_{p}^-&=\displaystyle{\sum_{\substack{0\leqslant i\leqslant p\\ |i|=1}}}\binom{m+1}{p-i}\binom{n+i-1}{i}
\endaligned
$$
Then, for the twisted case we have the following:
\begin{cor}[Bott's super formula]\label{cor:Bott}
Let $A$ be a $\bbQ$-algebra, then for any nonzero integer number $r$ one has:
$$
H^i(\bbP^{m|n}, \Omega^p_{\bbP^{m|n}/A}(r))=
\begin{cases}
K_{p,r}& \text{ if } i=m=0 \\
Z^{-p}(\SKos_A(L)_r) & \text{ if } i=0\\
Z^{-p}(H^m(U,\Omega_U^\bullet)_{r}) & \text{ if } i=m \\
0 & \text{ otherwise }
\end{cases}
$$
Furthermore, if $A$ is a (super) field of characteristic zero, then $H^0(\bbP^{m|n}, \Omega^p_{\bbP^{m|n}/A}(r))$ is a super vector space of dimension $(\delta_{}(p,r)_0^+,\delta_{}(p,r)_0^-)$ with
$$
\aligned
\delta_{}(p,r)_0^+&=\sum_{j=0}^p (-1)^j\big(\lambda_{p-j}^+\ell_{r-p+j,0}^++\lambda_{p-j}^-\ell_{r-p+j,0}^-\big)\\
\delta_{}(p,r)_0^-&=\sum_{j=0}^p (-1)^j\big(\lambda_{p-j}^+\ell_{r-p+j,0}^-+\lambda_{p-j}^-\ell_{r-p+j,0}^+\big)
\endaligned
$$
and $H^m(\bbP^{m|n}, \Omega^p_{\bbP^{m|n}/A}(r))$ is a super vector space of dimension $(\delta_{}(p,r)_m^+,\delta_{}(p,r)_m^-)$ with
$$
\aligned
\delta_{}(p,r)_m^+&=\sum_{j=0}^p (-1)^j\big(\lambda_{p-j}^+\ell_{r-p+j,m}^++\lambda_{p-j}^-\ell_{r-p+j,m}^-\big)\\
\delta_{}(p,r)_m^-&=\sum_{j=0}^p (-1)^j\big(\lambda_{p-j}^+\ell_{r-p+j,m}^-+\lambda_{p-j}^-\ell_{r-p+j,m}^+\big)
\endaligned
$$
\end{cor}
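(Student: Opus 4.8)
The plan is to deduce the statement from Theorem~\ref{thm:Bott} together with the acyclicity results of Section~\ref{sec:DeRahmKos}, the point being that passing to a $\bbQ$-algebra and a nonzero twist collapses all the case distinctions of Theorem~\ref{thm:Bott} into the two surviving terms. First I would observe that since $A$ is a $\bbQ$-algebra and $r\neq 0$, the integer $r$ is invertible in $A$, so by Theorem~\ref{thm:aciclicity} the homogeneous component $\SKos_A(L)_r$ is acyclic; hence $H^{i-p}\big(\SKos_A(L)_r\big)=0$ for every $i$ and $p$. Feeding this vanishing into the non-degenerate ranges of Theorem~\ref{thm:Bott}(b): for $0<i<m$ every term reduces to $0$; for $i=0$ the surviving contribution is the cycle module $Z^{-p}(\SKos_A(L)_r)$ (which is automatically $0$ when $r<0$, since $\SKos_A(L)$ lives in non-negative internal degrees, so the single formula $H^0=Z^{-p}(\SKos_A(L)_r)$ covers both signs of $r$); and the case $m=0$ is exactly Theorem~\ref{thm:Bott}(a), giving $K_{p,r}$.

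The case $i=m$ requires one extra input. When $-m-1+n<r-p$, Theorem~\ref{thm:Bott}(b)(3) and acyclicity give $H^m=0$, and on the other hand $H^m(U,\Omega_U^p)_r=\bigwedge^p_A L\otimes_A H^m(\bbP,\cO_\bbP(r-p))=0$ by Corollary~\ref{cor:cohhomdif}, so $Z^{-p}(H^m(U,\Omega_U^\bullet)_r)=0$ as well. When $-m-1+n\geq r-p$, Theorem~\ref{thm:Bott}(b)(4) together with the vanishing of $H^{m-p}(\SKos_A(L)_r)$ identifies $H^m\cong B^{-p}(H^m(U,\Omega_U^\bullet)_r)$. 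To replace boundaries by cycles I would invoke the identification $H^m(U,\Omega_U^\bullet)\simeq (\SKos_A L)[m+1]$ obtained in the proof of Proposition~\ref{prop:Bott}: in internal degree $r$ this complex is quasi-isomorphic to $\SKos_A(L)_r[m+1]$, which is acyclic for $r\neq 0$, whence $B^{-p}=Z^{-p}$ termwise. This yields $H^m=Z^{-p}(H^m(U,\Omega_U^\bullet)_r)$ uniformly, establishing the module description.

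For the numerical statement over a super field $k$ of characteristic $0$, each nonzero cohomology group is a cycle space $Z^{-p}$ of an \emph{exact} complex, so I would compute its super-dimension by a telescoping (Euler characteristic) argument. Truncating the exact complex to the right produces a \emph{finite} exact sequence
$$0\to Z^{-p}\to C^{-p}\to C^{-p+1}\to\cdots\to C^0\to 0,$$
with $C^{-j}=\bigwedge^j_A L\otimes_A S^{r-j}_A L$ in the $H^0$ case and $C^{-j}=\bigwedge^j_A L\otimes_A H^m(\bbP,\cO_\bbP(r-j))$ in the $H^m$ case, so that $\dim_k Z^{-p}=\sum_{j=0}^p(-1)^{p-j}\dim_k C^{-j}$. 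I would then read off the super-dimensions of the factors: by Proposition~\ref{prop:SEproperties} and Equation~\ref{eq:simetricotwisted}, $\bigwedge^j_A L\simeq\bigoplus_{a+b=j}\bigwedge^a_A(A^{m+1})\otimes_A\bigwedge^b_A(\Pi A^n)$ with $\bigwedge^b_A(\Pi A^n)\simeq\Pi^b S^b_A(A^n)$ of rank $\binom{n+b-1}{b}$ and parity $b$, giving even/odd dimensions $(\lambda_j^+,\lambda_j^-)$; while $S^s_A L=H^0(\bbP,\cO_\bbP(s))$ and $H^m(\bbP,\cO_\bbP(s))$ have super-dimensions $(\ell_{s,0}^+,\ell_{s,0}^-)$ and $(\ell_{s,m}^+,\ell_{s,m}^-)$ respectively by Theorem~\ref{thm:HO(r)}. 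Multiplying super-dimensions (even times even plus odd times odd for the even part, and symmetrically for the odd part), reindexing $j\mapsto p-j$, and summing produces exactly the stated $\delta(p,r)_0^{\pm}$ and $\delta(p,r)_m^{\pm}$.

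The hard part will be the bookkeeping around the $i=m$ term: one must check that the internal grading in $H^m(U,\Omega_U^\bullet)\simeq(\SKos_A L)[m+1]$ is matched so that the degree-$r$ piece is genuinely acyclic for every $r\neq 0$ (this is precisely what legitimizes $B^{-p}=Z^{-p}$), and one must track super-parities consistently throughout, since the even/odd splitting of each tensor factor is exactly what distinguishes $\delta^+$ from $\delta^-$. A minor but real point is that $H^m(U,\Omega_U^\bullet)$ has infinitely many nonzero terms (the factors $\bigwedge^q_A L$ never truncate because of the odd generators), so the Euler-characteristic count must be run on the finite right-truncation rather than on the whole complex.
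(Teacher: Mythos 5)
Your proposal is correct. The first half coincides with the paper's argument: the paper likewise invokes Theorem~\ref{thm:aciclicity} to kill $\SKos_A(L)_r$ for $r\neq 0$ invertible and then reads off the surviving terms of Theorem~\ref{thm:Bott}; your extra care in upgrading $B^{-p}$ to $Z^{-p}$ via the quasi-isomorphism $H^m(U,\Omega_U^\bullet)\simeq \SKos_A(L)[m+1]$ from the proof of Proposition~\ref{prop:Bott} is exactly the right justification for the form in which the statement is written, and your observation that $Z^{-p}(\SKos_A(L)_r)$ automatically vanishes for $r<0$ correctly unifies the cases. Where you diverge is the dimension count. The paper gets it from Corollary~\ref{cor:Qsplit}: since the sequences $0\to\Omega^p_{\bbP/A}(r)\to\wt\Omega^p_{B/A}(r)\to\Omega^{p-1}_{\bbP/A}(r)\to 0$ split as sheaves of $A$-modules, cohomology is additive, $H^i(\wt\Omega^p_{B/A}(r))=H^i(\Omega^p_{\bbP/A}(r))\oplus H^i(\Omega^{p-1}_{\bbP/A}(r))$, and the recursion unrolls to the alternating sum using Theorem~\ref{thm:HO(r)} and Corollary~\ref{cor:cohhomdif}. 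You instead compute the Euler characteristic of the right-truncation $0\to Z^{-p}\to C^{-p}\to\cdots\to C^0\to 0$ of the already-established exact complexes; this arrives at the identical alternating sum but sidesteps Corollary~\ref{cor:Qsplit} entirely, needing only the exactness of $\SKos_A(L)_r$ and of $H^m(U,\Omega_U^\bullet)_r$ that you used in the first half (and your remark that the truncation, not the full unbounded complex, is what carries the Euler-characteristic argument is a genuine point that the paper leaves implicit). The paper's route buys a stronger intermediate statement (the direct sum decomposition of the cohomology as supermodules, not just a dimension identity), while yours is more self-contained; the parity bookkeeping for $\lambda^\pm$ and $\ell^\pm$ is handled correctly in both.
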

\begin{proof}
Since $A$ is a $\bbQ$-algebra, $\SKos_A(L)_r$ is acyclic by Theorem \ref{thm:aciclicity}. Then, the first part follows from Theorem \ref{thm:Bott}.

By Corollary \ref{cor:Qsplit} one has 
$$H^i(\bbP^{m|n}, \Omega^p_{\bbP^{m|n}/A}(r))\oplus H^i(\bbP^{m|n}, \Omega^{p-1}_{\bbP^{m|n}/A}(r))=H^i(\bbP^{m|n}, \tilde\Omega^p_{B/A}(r))$$
and we apply Theorem \ref{thm:HO(r)} and Corollary \ref{cor:cohhomdif} to conclude.
\end{proof}


\end{document}